\theoremstyle{definition}
\newtheorem{thm}{Theorem}[section]
\newtheorem{lem}[thm]{Lemma}
\newtheorem{prp}[thm]{Proposition}
\newtheorem{dfn}[thm]{Definition}
\newtheorem{cor}[thm]{Corollary}
\newtheorem{rmk}[thm]{Remark}
\newcommand{\beq}{\begin{equation}}
\newcommand{\eeq}{\end{equation}}
\newcommand{\beqr}{\begin{eqnarray*}}
\newcommand{\eeqr}{\end{eqnarray*}}
\newcommand{\bal}{\begin{align*}} 
\newcommand{\eal}{\end{align*}}
\newcommand{\bei}{\begin{itemize}}
\newcommand{\eei}{\end{itemize}}
\newcommand{\af}{\alpha}
\newcommand{\bt}{\beta}
\newcommand{\gm}{\gamma}
\newcommand{\dt}{\delta}
\newcommand{\ep}{\varepsilon}
\newcommand{\ld}{\lambda}
\newcommand{\om}{\omega}
\newcommand{\Z}{{\mathbb{Z}}}
\newcommand{\C}{{\mathbb{C}}}
\newcommand{\T}{{\mathbb{T
}}}
\newcommand{\N}{{\mathbb{N}}}
\newcommand{\Her}{\mathrm{Her}}
\newcommand{\id}{{\mathrm{id}}}
\newcommand{\diag}{{\mathrm{diag}}}
\newcommand{\Act}{{\mathrm{Act}}}
\newcommand{\Aut}{{\mathrm{Aut}}}
\newcommand{\Ad}{{\mathrm{Ad}}}
\newcommand{\Min}{\mathrm{Min}}
\newcommand{\Max}{\mathrm{Max}}
\newcommand{\M}{\mathrm{M}}
\newcommand{\V}{\mathrm{V}}
\newcommand{\W}{\mathrm{W}}
\newcommand{\TS}{\mathrm{T}}
\newcommand{\DF}{\mathrm{DF}}
\newcommand{\QED}{\rule{0.4em}{2ex}}
\newcommand{\ifo}{if and only if}
\newcommand{\js}{\mathcal{Z}}
\newcommand{\pwl}{\precsim_{\mathrm{p.w.}}}
\newcommand{\cra}{\curvearrowright}
\begin{document}
\title{The Tracial Rokhlin Property for Actions of Amenable Groups on C*-Algebras}
\author{Qingyun Wang}
\maketitle
\begin{abstract}
The tracial Rokhlin property for finite group actions on simple C*-algebras was introduced by Chris Phillips in \cite{Phillips2011} to study the structure of the crossed product. It is much more flexible than the Rokhlin property, but still produces good structural theorems (See \cite{Phillips2011}, \cite{Elliott2008}). It should be viewed as the C*-version of outness that has the closest relationship with outness of actions on von Neumann algebras, while the later has been well developed. (See \cite{Connes1975}, \cite{Jones1980}, \cite{Ocneanu1985} e.g.) The tracial Rokhlin property for actions of $\Z$ has been studied by many authors (See \cite{Kishimoto1996}, \cite{Kishimoto1998}, \cite{Lin2006}, \cite{Osaka2006}, \cite{Osaka2006b} e.g.). Matui and Sato gave a definition of the tracial Rokhlin property for actions of discrete amenable groups in \cite{Matui2013b} and \cite{Matui2014}. They studied both the structure of the crossed product and classification of actions. But their definition is (at least formally) stricter than  the usual definition for finite group actions or $\Z$ actions, and their results works only for a special class of amenable groups.\\
In this paper, we present a definition of tracial Rokhlin property for (cocycle) actions of countable discrete amenable groups on simple C*-algebras, which generalize Matui and Sato's definition. We show that generic examples, like Bernoulli shift on the tensor product of copies of the Jiang-Su algebra, has the weak tracial Rokhlin property, while it was shown in \cite{Hirshberg2014} that such action does not have finite Rokhlin dimension. We show that forming crossed product from actions with the tracial Rokhlin property preserves the class of C*-algebras with real rank zero, stable rank one and has strict comparison for projections, generalizing the structural results in \cite{Osaka2006b}. We use the same idea of the proof with significant simplification. In another joint paper with Chris Phillips and Joav Orovitz, we shall show that pureness and $\js$-stability could be preserved by crossed product of actions with the weak tracial Rokhlin property. The combination of these results yields application in classification program, which will be discussed in the aforementioned paper. These results indicate that we have the right definition of tracial Rokhlin property for actions of general countable discrete amenable groups.
\end{abstract}
\section{Preliminary}\label{preliminaries}
Let $A$ be a C*-algebra in the following.\\
For $a,b\in A$, we mean by $[a,b]$ the commutator $ab-ba$. \\
For $\ep>0$, we write $a=_{\ep} b$ to mean $\|a-b\|<\ep$. For $B\subset A$, we write $a\in_{\ep} B$ if there is some $b\in B$ such that $a=_{\ep} b$.\\
If $h$ is a real function, then $h_+$ is the function defined by $h_+(t)=\Max\{0,h(t)\}$. If $a\in A$ is self-adjoint, then $a_+=\iota_+(a)$, where $\iota$ is the identity function.\\
The set of tracial states on $A$ is denoted by $\TS(A)$. For $a\in A$ and $\tau\in \TS(A)$, we define
\begin{equation*}
\|a\|_{2,\tau}=\|\tau(a^*a)^{1/2}\|,\hspace{1cm} \|a\|_2=\sup_{\tau\in \TS(A)}\|a\|_{2,\tau}.
\end{equation*}
If $\TS(A)$ is non-empty, then $\|\cdot\|_2$ is a semi-norm. For $\tau\in T(A)$, we let $\pi_\tau, H_\tau$ denote the GNS representation of $A$ associated with $\tau$. The dimension function $d_{\tau}$ associated with $\tau$ is given by
\begin{equation*}
d_{\tau}(a)=\lim_{n\rightarrow \infty}\tau(a^{1/n}),
\end{equation*}
for a positive element $a\in A$.\\
We write $\V(A)$ for the Murray-von Neumann semigroup and $\W(A)$ for the Cuntz semigroup. (See section 2 of \cite{Brown2008} for an introduction to the Cuntz semigroup). The space of states on $\W(A)$ is denoted by $\DF(A)$, where $\DF$ stands for dimension functions. For any $\tau\in \TS(A)$, $d_{\tau}$ give rise to a lower semicontinuous dimension functions on $A$.\\
Let $\om\in \bt\N\backslash \N$ be a free ultrafilter. Define
\begin{align*}
&c_\infty(A)=\{(a_n)\in\ell^{\infty}(\N,A)\,\vert\, \lim_{n\rightarrow \infty}\|a_n\|=0\}, \quad A^{\infty}=\ell^{\infty}(\N,A)/c_\infty(A);\\
&c_\om(A)=\{(a_n)\in\ell^{\infty}(\N,A)\,\vert\, \lim_{n\rightarrow \om}\|a_n\|=0\},\quad  A^\om=\ell^{\infty}(\N,A)/c_\om(A).
\end{align*}
Identify $A$ with the subalgebra of $A^{\infty}$ ($A^{\om}$) consists of constant sequences. Let
\begin{equation*}
A_{\infty}=A^{\infty}\cap A^{\prime}, \quad\quad A_{\om}=A^{\om}\cap A^{\prime}
\end{equation*}
and call them the central sequence algebras of $A$. \\
For a sequence $x=(x_i)_{i\in \N}$, define $\|x\|_{2,\om}=\lim_{n\rightarrow \om} \|x_n\|_2$. This defines a seminorm in $A^{\om}$. Let 
\beq
J_A=\{x\in A^{\om}\,\vert\, \|x\|_{2,\om}=0\}.
\eeq
Then $J_A$ is a well-defined two-sided close ideal in $A^{\om}$. \\
The cardinality of a set $F$ is written as $|F|$.
\begin{dfn}
Let $G$ be a countable discrete group.
\begin{enumerate}[(1)]
\item For a finite subset $K\in G$ and $\ep>0$, we say that a finite subset $T\subset G$ is $(K,\ep)$-invariant if $|T\cap\bigcap_{g\in F}gT|\geq (1-\ep)|T|$.
\item The group $G$ is amenable if for any finite subset $K\in G$ and $\ep>0$ there exists an $(K,\ep)$-invariant finite subset $T\in G$.
\end{enumerate}
\end{dfn}
Let $G$ be any discrete group, We write $\Act_G(A)$ to be the set of all actions $\af\colon G\rightarrow \mathrm{Aut}(A)$.

When $\af$ is an automorphism or an action of $A$, we can consider its natural extensions on $A^{\om}$ and $A_{\om}$. We shall denote it by the same symbol $\af$.\\
For $\af\in\mathrm{Aut}(A)$, we let 
\beq
\TS^{\af}(A)=\{\tau\in \TS(A)\,\vert\, \tau\circ\af=\tau\}.
\eeq

%\begin{dfn}
%Let $\af,\bt\in \Act_G(A)$. be actions of a discrete group $G$ on unital C*-algebras $A$ and $B$.
%\begin{enumerate}[(1)]
%\item The two actions $\af$ and $\bt$ are said to be conjugate when there exists an isomorphism $\mu\colon A\rightarrow B$ such that $\af_g=\mu^{-1}\circ\bt_g\circ \mu$ for all% $g\in G$.
%\item A family of unitaries $\{u_g\,\vert\,g\in G\}$ in $A$ is called an $\af$-cocycle if one has $u_{gh}=u_g\af_g(u_h)$ for all $g,h\in G$. When $\{u_g\}_{g\in G}$ is an $\af$%-cocycle, , the pertubed action $\af^{u}\in \Act_G(A)$ is defined by $\af_g^{u}=\mathrm{Ad} u_g\circ \af_g$.
%\item The two actions $\af$ and $\bt$ are said to be cocycle conjugate if there exists an $\af$-cocycle $\{u_g\}_{g\in G}$ in $A$ such that $\af^{u}$ is conjugate to $\bt$.
%\item The two actions $\af$ and $\bt$ are said to be strongly cocycle conjugate if there exist an $\af$-cocycle $\{u_g\}_{g\in G}$ in $A$ and a sequence of unitaries $\{v_n\}_{%n=1}^{\infty}$ in $A$ such that $\af^{u}$ is conjugate to $\bt$ and $lim_{n\rightarrow \infty}\|u_g-v_n\af_g(v_n^*)\|=0$ for all $g\in G$. 
%\end{enumerate}
%\end{dfn}

\begin{dfn}
Let $A$ be a unital C*-algebra and let $G$ be a discrete group.
\begin{enumerate}[(1)]
\item A pair $(\af, u)$ of a map $\af \colon G\rightarrow \Aut(A)$ and a map $u\colon G\times G \rightarrow U(A)$ is called a cocycle action of $G$ on $A$ if 
\begin{equation*}
\af_g\circ\af_h= \Ad u(g,h)\circ \af_{gh}
\end{equation*}
and
\begin{equation*}
u(g,h)u(gh,k)=\af_g(u(h,k))u(g,hk)
\end{equation*}
hold for any $g,h,k\in G$. We always assume $\af_{1}=\id$, $u(g,1)=u(1,g)=1$ for all $g\in G$. Notice that $\af$ give rise to a genuine action of $G$ on $A_{\om}$.
\item A cocycle action $(\af, u)$ is said to be outer if $\af_g$ is outer for every $g\in G$ except for the identity element.
\item Two cocycle actions $(\af, u)\colon G\cra A$ and $(\bt, v)\colon G\cra B$ are said to be cocycle conjugate if there exist a family of unitaries $(w_g)_{g\in G}$ in $B$ and an isomorphism $\theta\colon A\rightarrow B$ such that
\beq
\theta\circ\af_g\circ \theta^{-1}=\Ad w_g\circ \bt_g
\eeq
and
\beq
\theta(u(g,h))=w_g\bt_g(w_h)v(g,h)w_{gh}^*
\eeq
for every $g, h\in G$.
\end{enumerate}
\end{dfn}

\begin{dfn}
Let $(\af, u)\colon G\cra A$ be a cocycle action of a discrete group $G$ on a unital C*-algebra $A$. The (full) twisted crossed product $A\rtimes_{\af, u} G$ is the universal C*-algebra generated by $A$ and a family of unitaries $(\ld_g^{\af})_{g\in G}$ satisfying
\beq
\ld_g^{\af}\ld_h^{\af}=u(g,h)\ld_{gh}^{\af}\quad\quad\text{and}\quad\quad \ld_g^{\af}a(\ld_g^{\af})^*=\af_g(a)
\eeq
for all $g, h\in G$ and $a\in A$.
\end{dfn}
If two cocycle actions $(\af, u)\colon G\cra A$ and $(\bt, v)\colon G\cra B$ are cocycle conjugate, then $A\rtimes_{\af, u}G$ and $B\rtimes_{\bt, v}G$ are canonically isomorphic. \\
We introduce the following comparison for the convenience of studying tracial Rokhlin property. 
\begin{dfn}\label{pwl}
Let $f\in (A^{\om})_{+}$ and $a$ be an element of $A_{+}$. We say $f$ is pointwisely Cuntz subequivalent to $a$ and write $f\pwl a$, if $f$ has a representative $(f_n)_{n\in \N}\in \ell^{\infty}(\N, A)$, such that each $f_n$ is positive and $f_n\precsim a$ in $A$, for all $n\in\N$. 
\end{dfn}

\section{Equivalent definitions of the tracial Rokhlin property}
Through out this paper, we let $\om\in \bt\N\backslash \N$ to be some fixed free ultrafilter. We shall also assume that the groups acting on C*-algebras are countable, discrete and amenable.
\begin{dfn}\label{WTRP}
Let $A$ be a simple unital C*-algebra. Let $(\af, u)\colon G\cra A$ be a cocycle action. We say $\af$ has the tracial Rokhlin property, if for any finite subset $K$ of $G$, any $\ep>0$, and any $z\in A_{+}\backslash\{0\}$, there exist $(K,\ep)$-invariant finite subsets $T_1, T_2,\cdots,T_n$ and projections $\{e_i\,\vert\,  1\leq i\leq n\}\subset A_\om$ such that,
\begin{enumerate}[(1)]
\item $\af_{g}(e_i)\af_{h}(e_j)=0$, for any $g\in T_i, h\in T_j$ such that $g\neq h$ or $i\neq j$.
\item With $e=\sum_{g\in T_i,1\leq i\leq n}\af_{g}(e_i)$,\quad $1-e\pwl z$. (See Definition \ref{pwl})
\end{enumerate}
If the $e_i$'s in the above is weakened to be a positive contraction, then we say that $\af$ has the weak tracial Rokhlin property.
\end{dfn}

One can alternatively define the (weak) tracial Rokhlin property in terms of the original C*-algebra $A$ with approximate relations. 
\begin{prp}\label{edef}
Let $A$ be a simple separable unital C*-algebra. Let $(\af, u)\colon G\cra A$ be a cocycle action. Then $(\af, u)$ has the weak tracial Rokhlin property if and only if, for any finite subset $K$ of $G$, any $\ep_0>0$, and any non-zero positive element $z\in A$, there are $(K,\ep_0)$-invariant subset $T_1,\dots, T_n$ of $G$, such that for any finite subset $F$ of $A$, any $\ep_1>0$, there exists mutually orthogonal positive contractions $\{e_{g,i}\}_{g\in T_i, 1\leq i\leq n}$ with the following properties:
\begin{enumerate}[(1)]
\item $\|[e_{g,i}, f]\|<\ep_1$, for any $g\in T_i$ and any $f\in F$.
\item $\|\af_{hg^{-1}}(e_{g,i})-\lambda_{hg^{-1}, g}e_{h,i}\lambda_{hg^{-1},g}^*\|<\ep_1$, for any $g$ and $h$ in $T_i$.
\item With $e=\sum_{g\in T_i, 1\leq i\leq n}e_{g,i}$, we have $1-e\precsim z$.
\end{enumerate}
Furthermore, if $\af$ has the tracial Rokhlin property, then the positive contractions $e_g$ could always be chosen to be non-zero projections.
\end{prp}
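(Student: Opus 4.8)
The plan is to prove both implications by moving between an \emph{exactly} orthogonal, \emph{exactly} central family in $A_\om$ and an \emph{approximately} orthogonal, \emph{approximately} central family in $A$, and to exploit two structural facts. First, as noted after the definition of a cocycle action, the cocycle $u$ acts trivially on $A_\om$ (each $u(g,h)\in A$ commutes with $A_\om$), so $\af$ restricts to a genuine action there; consequently the approximate covariance relation (2) is just the cocycle identity $\af_{hg^{-1}}\circ\af_g=\Ad u(hg^{-1},g)\circ\af_h$ read off at the level of representatives, with $u(hg^{-1},g)$ the unitary denoted $\lambda_{hg^{-1},g}$ in (2). Second, the relation $\pwl$ of Definition \ref{pwl} is designed precisely so that a sequence of honest subequivalences $1-e^{(m)}\precsim z$ in $A$ assembles into a single p.w.-subequivalence in $A^\om$, and conversely.

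For the forward direction, suppose $(\af,u)$ has the weak tracial Rokhlin property and fix $K$, $\ep_0$, $z$. Applying Definition \ref{WTRP} to this data produces $(K,\ep_0)$-invariant sets $T_1,\dots,T_n$ and positive contractions $e_i\in A_\om$; I output these same $T_i$. I fix representatives $e_i=(e_{i,m})_m$ by positive contractions, set $\tilde e=\sum_{g\in T_i,\,i}\af_g(e_i)$, and fix a positive representative $(f_m)_m$ of $1-\tilde e$ with $f_m\precsim z$ for every $m$, which exists since $1-\tilde e\pwl z$. Given a finite $F\subset A$ and $\ep_1>0$, I choose a single index $m$ in the relevant set of $\om$ at which the quantities $\|\af_g(e_{i,m})\af_h(e_{j,m})\|$, the commutators $\|[\af_g(e_{i,m}),f]\|$ $(f\in F)$, and $\|f_m-(1-\sum_{g,i}\af_g(e_{i,m}))\|$ are all below a small parameter $\delta$. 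A standard perturbation lemma replaces the $\delta$-almost orthogonal family $\{\af_g(e_{i,m})\}$ by genuinely orthogonal positive contractions $\{e'_{g,i}\}$ that are close to it. Property (1) then holds because the $e_i$ are central; property (2) holds because $\af_{hg^{-1}}(\af_g(e_{i,m}))=u(hg^{-1},g)\af_h(e_{i,m})u(hg^{-1},g)^*$ holds exactly and the perturbation is small.

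The one genuinely delicate point is the exact subequivalence in (3). Writing $s=\sum_{g,i}e'_{g,i}$, the estimates above give only $(1-s-\ep')_+\precsim f_m\precsim z$ for $\ep'$ exceeding the total approximation defect $\gamma$, not $1-s\precsim z$ itself, since the bottom part of $1-s$ is lost. To recover it I apply the fattening map $\phi(t)=\min(1,t/\beta)$ with $\beta=\tfrac12$ and set $e_{g,i}=\phi(e'_{g,i})$. These remain mutually orthogonal positive contractions, and since $\phi(0)=0$ and the $e'_{g,i}$ are orthogonal we have $\sum_{g,i}e_{g,i}=\phi(s)$, so $1-\sum_{g,i}e_{g,i}=(1-s/\beta)_+$, which is Cuntz equivalent to $(\beta-s)_+\le (1-\ep'-s)_+\sim(1-s-\ep')_+\precsim z$ as soon as $\gamma<\ep'\le 1-\beta$ (arrange $\gamma<\tfrac12$). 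As $\phi$ is continuous with $\phi(0)=0$, properties (1) and (2) survive with a controlled loss that I absorb by shrinking $\delta$ and the working tolerance at the outset; this yields the required $\{e_{g,i}\}$.

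For the converse, assume the approximate condition and fix $K$, $\ep$, $z$; apply it to obtain $(K,\ep)$-invariant sets $T_1,\dots,T_n$. Using separability, choose finite sets $F_1\subset F_2\subset\cdots$ with dense union and $\ep_{1,m}\downarrow 0$, and for each $m$ extract orthogonal positive contractions $\{e^{(m)}_{g,i}\}$ satisfying (1)--(3). For each pair $(g,i)$ the bounded sequence $(e^{(m)}_{g,i})_m$ defines $\bar e_{g,i}\in A^\om$; property (1) together with $F_m\uparrow A$ forces $\bar e_{g,i}\in A_\om$, and exact orthogonality for each $m$ gives $\bar e_{g,i}\bar e_{h,j}=0$ for distinct pairs. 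Property (2), combined with the triviality of $u$ on $A_\om$, becomes the exact relation $\af_{hg^{-1}}(\bar e_{g,i})=\bar e_{h,i}$ there; fixing $g_i\in T_i$ and putting $e_i=\bar e_{g_i,i}$, $T'_i=T_ig_i^{-1}$ gives $\bar e_{h,i}=\af_{hg_i^{-1}}(e_i)$ and exhibits $\{\af_s(e_i):s\in T'_i\}$ as the orthogonal family required in (1) of Definition \ref{WTRP}, each $T'_i$ being again $(K,\ep)$-invariant since left-invariance is preserved under right translation. Finally $1-\sum_{s,i}\af_s(e_i)$ is represented by $(1-\sum_{g,i}e^{(m)}_{g,i})_m$, a sequence of positive elements each $\precsim z$ by (3), so $1-\sum_{s,i}\af_s(e_i)\pwl z$ directly from Definition \ref{pwl}.

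The hard part is exactly the exact-versus-approximate subequivalence handled by the fattening step; everything else is bookkeeping with representatives and the standard perturbation lemmas for almost-orthogonal positive (respectively almost-commuting, almost-idempotent) elements. For the last assertion, if the $e_i$ are projections then the $\af_g(e_{i,m})$ are almost-orthogonal near-projections, and the perturbation lemma can be taken to output genuine mutually orthogonal projections $e_{g,i}$; here no fattening is needed, because for the projection $q=1-\sum_{g,i}e_{g,i}$ one has $(q-\ep')_+=(1-\ep')q\sim q$, so $q\precsim z$ follows immediately from $(q-\ep')_+\precsim z$. The converse for projections is identical, using that a sequence of projections represents a projection in $A^\om$.
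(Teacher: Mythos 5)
The paper states Proposition \ref{edef} without giving any proof, so there is nothing to compare your argument against; judged on its own terms, your proof is correct and complete. The quantifier structure is handled properly (the $T_i$ depend only on $(K,\ep_0,z)$ and are reused for every $(F,\ep_1)$, which is exactly what both directions require), your reading of $\lambda_{hg^{-1},g}$ as the cocycle unitary $u(hg^{-1},g)$ is the right one (it is forced by the identity $\af_{hg^{-1}}\circ\af_g=\Ad u(hg^{-1},g)\circ\af_h$, and the triviality of $\Ad u(g,h)$ on $A_\om$ is precisely why condition (2) degenerates to the exact relation $\af_{hg^{-1}}(\bar e_{g,i})=\bar e_{h,i}$ in the converse), and the right-translation $T_i'=T_ig_i^{-1}$ together with the observation that $(K,\ep)$-invariance is preserved under right translation is the correct fix for the indexing mismatch. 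The one genuinely nontrivial point is, as you say, upgrading $(1-s-\ep')_+\precsim z$ to an exact subequivalence $1-\sum_{g,i}e_{g,i}\precsim z$, and your fattening $\phi(t)=\min(1,t/\beta)$ does this cleanly: orthogonality gives $\sum_{g,i}\phi(e'_{g,i})=\phi(s)$, hence $1-\phi(s)=(1-s/\beta)_+\sim(\beta-s)_+=((1-s)-(1-\beta))_+\precsim((1-s)-\ep')_+\precsim z$ once $\ep'\le 1-\beta$, and the loss in (1) and (2) is absorbed by uniform continuity of the functional calculus. Two cosmetic remarks only: the perturbation to exactly orthogonal contractions should be attributed to weak stability/semiprojectivity of $\bigoplus C_0((0,1])$ (the paper itself invokes this in the proof of Proposition \ref{rl}), with constants uniform because the number $\sum_i|T_i|$ of elements is fixed before $F$ and $\ep_1$ are chosen; and in the projection case the phrase ``non-zero'' is obtained by discarding any index with $e_i=0$, or by noting that a non-zero projection in $A_\om$ has representatives of norm one along the ultrafilter.
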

\begin{rmk}
The above proposition shows that the definition of (weak) tracial Rokhlin property is independent of the choice of free ultrafilter. One can also use $A_\infty$ instead of $A_\om$ in the definition. For most of the results and proofs of this paper, it doesnot matter which one we use. However, one advantage of using a free ultrafilter instead of the sequence algebra is that, if $(M, \tau)$ is a tracial von Neumann algebra, then $M^{\om}=\ell^{\infty}(M)/c_{\om, \tau}(M)$ is again a von Neumann algebra, where $c_{\om, \tau}(M)=\{x\in M\,\vert\, \lim_{n\rightarrow \om} \tau(xx^*)^{1/2}=0\}$. While the analogue algebra $M^{\infty}=\ell^{\infty}(M)/c_{0, \tau}(M)$ is not a von Neumann algebra. We will make use of this fact in the proof of Proposition \ref{rl}.
\end{rmk}
When the C*-algebra has strict comparison, Cuntz comparison is equivalent to comparison by traces, which gives the following:
\begin{prp}\label{edefT}
Let $A$ be a simple unital C*-algebra with strict comparison. Let $(\af, u)\colon G\cra A$ be a cocycle action. Then $(\af, u)$ has the weak tracial Rokhlin property if and only if, for any finite subset $K$ of $G$, any $\ep_0, \ep_1>0$, there exists $(K,\ep_0)$-invariant subsets $T_1,\dots, T_n$ and positive contractions $e_1,\dots e_n\in A_{\om}$, such that
\begin{enumerate}[(1)]
\item $\af_g(e_i)\af_h(e_j)=0$, for $g\in T_i$ and $h\in T_j$ such that $g\neq h$ or $i\neq j$.
\item Let $e=\sum_{g\in T_i, 1\leq i\leq n}(\af_g(e_i))$, there is a representative $(e^{(n)})_{n\in\N}$ of $e$ such that
\beq
\lim_{n\rightarrow \om} \max_{\tau\in \TS(A)}d_\tau(1-e^{(n)})<\ep_1.
\eeq
\end{enumerate}
In case of tracial Rokhlin property, one could replace positive contraction by non-zero projection in the above statement.
\end{prp}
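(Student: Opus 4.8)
My plan is to prove the two implications separately, using strict comparison precisely as the dictionary that converts Cuntz subequivalence into inequalities of dimension functions. Observe first that condition (1) is literally identical in the weak tracial Rokhlin property (Definition \ref{WTRP}) and in the statement to be proved, so in both directions only the ``smallness of $1-e$'' clauses need to be matched: $1-e \pwl z$ on the one side and $\lim_{n\to\om}\max_{\tau} d_\tau(1-e^{(n)}) < \ep_1$ on the other. Throughout I regard $e = \sum \af_g(e_i)$ as a positive contraction in $A_\om$ (the $\af_g(e_i)$ are mutually orthogonal positive contractions by (1)), so that $1-e$ is a positive contraction and each $1-e^{(n)}$ is positive once the representative $(e^{(n)})$ is chosen to consist of positive contractions.

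For the forward implication (weak tracial Rokhlin property $\Rightarrow$ the stated condition), fix $K,\ep_0,\ep_1$. The first step is to produce a single nonzero positive test element $z\in A$ with $\sup_{\tau\in\TS(A)} d_\tau(z) < \ep_1$. Since $A$ is simple and non-elementary, a standard divisibility argument (take $m$ pairwise orthogonal, pairwise Cuntz-equivalent nonzero positive elements, where $m>1/\ep_1$) produces a nonzero positive $z$ with $m\langle z\rangle \le \langle 1\rangle$ in $\W(A)$, whence $m\,d_\tau(z)\le d_\tau(1)=1$ and $d_\tau(z)\le 1/m<\ep_1$ for every $\tau$; this is the only place a structural fact beyond simplicity is used. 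Feeding $K,\ep_0,z$ into the weak tracial Rokhlin property yields $(K,\ep_0)$-invariant $T_1,\dots,T_n$ and positive contractions $e_1,\dots,e_n\in A_\om$ satisfying (1) and $1-e\pwl z$. By Definition \ref{pwl} the latter gives a representative $(f_n)$ of $1-e$ with each $f_n\ge 0$ and $f_n\precsim z$; since dimension functions are monotone under Cuntz subequivalence, $d_\tau(f_n)\le d_\tau(z)<\ep_1$ for all $\tau$ and all $n$. Setting $e^{(n)}=1-f_n$ gives a representative of $e$ with $1-e^{(n)}=f_n$, so $\lim_{n\to\om}\max_\tau d_\tau(1-e^{(n)}) \le \sup_\tau d_\tau(z)<\ep_1$, which is clause (2).

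For the converse, fix $K,\ep$ and a nonzero $z\in A_+$. Set $\delta=\inf_{\tau\in\TS(A)} d_\tau(z)$; because $\tau\mapsto d_\tau(z)$ is lower semicontinuous on the weak-$*$ compact set $\TS(A)$ and is strictly positive for each $\tau$ (traces on simple unital C*-algebras are faithful and $z\neq 0$), the infimum is attained and $\delta>0$. Apply the stated condition with $\ep_0=\ep$ and $\ep_1=\delta$ to obtain $(K,\ep)$-invariant $T_1,\dots,T_n$, positive contractions $e_1,\dots,e_n$, and a representative $(e^{(n)})$ with $\lim_{n\to\om}\max_\tau d_\tau(1-e^{(n)})<\delta$. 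Hence there is a set $S\in\om$ on which $\max_\tau d_\tau(1-e^{(n)})<\delta\le d_\tau(z)$ for every $\tau$; this strict, uniform inequality $d_\tau(1-e^{(n)})<d_\tau(z)$ lets strict comparison conclude $1-e^{(n)}\precsim z$ for each $n\in S$. Finally I repair the representative off the ultrafilter: define $f_n=1-e^{(n)}$ for $n\in S$ and $f_n=0$ for $n\notin S$. As $S\in\om$, $(f_n)$ is again a representative of $1-e$, each $f_n$ is positive with $f_n\precsim z$, so $1-e\pwl z$ and the weak tracial Rokhlin property holds. The projection version is proved verbatim: the tracial Rokhlin property furnishes projections $e_i$ in the forward direction and the converse returns projections, and neither the monotonicity of $d_\tau$ nor strict comparison is sensitive to whether the $e_i$ are projections or merely positive contractions.

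The main obstacle I anticipate is not conceptual but lies in the interface between the two formulations of smallness. Strict comparison only delivers $1-e^{(n)}\precsim z$ for indices in some $S\in\om$, whereas Definition \ref{pwl} demands a representative whose \emph{every} coordinate is Cuntz-below $z$; the remedy is the ultrafilter surgery of redefining the sequence on $\N\setminus S\notin\om$, and one must check that the relevant inequalities are genuinely strict and uniform in $\tau$, which is exactly why I pass to $\delta=\inf_\tau d_\tau(z)>0$ rather than argue pointwise in $\tau$. A secondary point requiring care is the existence of the small test element $z$ in the forward direction, where non-elementarity of $A$ (rather than strict comparison) is what is actually invoked.
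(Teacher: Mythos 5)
Your proposal is correct and follows exactly the route the paper intends: the paper offers no detailed argument, treating the proposition as the immediate translation between Cuntz subequivalence and dimension-function inequalities that strict comparison provides, and your proof is that translation carried out carefully (small test elements from divisibility in a simple non-elementary algebra for the forward direction; faithfulness of traces, lower semicontinuity of $\tau\mapsto d_\tau(z)$, and the redefinition of the representative off a set of the ultrafilter for the converse). The points you flag as delicate --- uniformity in $\tau$ and the passage from ``$\precsim z$ on a set in $\om$'' to ``$\precsim z$ coordinatewise'' --- are handled correctly and are indeed the only places where anything needs to be checked.
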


The above proposition lead to the following definition:
\begin{dfn}\label{WTRPT}
Let $(\af, u)\colon G\cra A$ be a cocycle action. Let $S\subset \TS(A)$. We say $(\af, u)$ has the (weak) tracial Rokhlin property with respect to $S$, if for any finite subset $K$ of $G$, any $\ep_0, \ep_1>0$, there exists $(K,\ep_0)$-invariant subsets $T_1,\dots, T_n$ and projections (positive contractions) $e_1,\dots e_n\in A_{\om}$, such that
\begin{enumerate}[(1)]
\item $\af_g(e_i)\af_h(e_j)=0$, for $g\in T_i$ and $h\in T_j$ such that $g\neq h$ or $i\neq j$.
\item Let $e=\sum_{g\in T_i, 1\leq i\leq n}(\af_g(e_i))$, there is a representative $(e^{(n)})_{n\in\N}$ of $e$ such that
\beq
\lim_{n\rightarrow \om} \max_{\tau\in S}d_\tau(1-e^{(n)})<\ep_1.
\eeq
\end{enumerate}
\end{dfn}

In the following we shall show that, if $A$ has tracial rank zero, then weak tracial Rokhlin property actually implies tracial Rokhlin property. The case $G=\Z$ has been  proved by Phillips and Osaka (See Theorem 2.14 of \cite{Osaka2006} and Proposition 1.3 of \cite{Phillips2012}). We need a lemma before we prove it.
\begin{lem}\label{mc}
Let $A$ be a C*-algebra and $B$ is finite dimensional subalgebra. Let $\{e_{ij}^{l}\}$ be the standard matrix units of $B$. Then for any $\ep>0$, there is a $\dt>0$, such that whenever a projection $p\in A$ satisfies $\|[p,e_{ij}^{l}]\|<\dt$ for all $i,j,l$, there is a projection $q$ in the relative commutant $A\cap B^{\prime}$, such that $\|p-q\|<\ep$.
\end{lem}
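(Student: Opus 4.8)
The plan is to manufacture out of the near-commuting projection $p$ a genuine self-adjoint element of the relative commutant $A\cap B^{\prime}$ that is close to $p$, and then to round it off to a projection by functional calculus. Write $B=\bigoplus_{l}M_{n_{l}}$, let $e=\sum_{l,i}e_{ii}^{l}$ be the unit of $B$, and put $M=\sum_{l}n_{l}$. The first point is that approximate commutation with the matrix units forces approximate commutation with the unit $e$: since $e$ is a sum of at most $M$ of the $e_{ii}^{l}$, we get $\|[p,e]\|\le M\dt$. Then I would define the candidate
\beq
a=\sum_{l}\frac{1}{n_{l}}\sum_{i,k}e_{ik}^{l}\,p\,e_{ki}^{l}\;+\;(1-e)p(1-e),
\eeq
where the last summand is read as $p-ep-pe+epe$ so that it makes sense in $A$ even if $A$ is non-unital. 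A direct computation shows that each inner double sum commutes with the matrix units of its own block while its products with matrix units of the other blocks vanish, so the first term lies in $A\cap B^{\prime}$; and $(1-e)p(1-e)$ lies in $A\cap B^{\prime}$ because $be=eb=b$ for every $b\in B$. Since $a$ is manifestly self-adjoint, $a$ is a self-adjoint element of the C*-subalgebra $A\cap B^{\prime}$.

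The norm estimate is the routine part. Using $e_{ik}^{l}pe_{ki}^{l}=e_{ii}^{l}p+e_{ik}^{l}[p,e_{ki}^{l}]$ and summing, the first term of $a$ equals $ep+R$ with $\|R\|\le\sum_{l}\frac{1}{n_{l}}n_{l}^{2}\dt=M\dt$; combining this with the decomposition $p=ep+(1-e)pe+(1-e)p(1-e)$ and with $(1-e)pe=(1-e)[p,e]$ yields $\|a-p\|\le 2M\dt$. Provided $2M\dt<\tfrac12$, the spectrum of $a$ is contained in $[-2M\dt,2M\dt]\cup[1-2M\dt,1+2M\dt]$, so the function $\ch$ equal to $0$ on the first interval and $1$ on the second is continuous on $\spec(a)$ with $\ch(0)=0$; hence $q:=\ch(a)$ is a projection lying in $C^{*}(a)\subseteq A\cap B^{\prime}$, with $\|q-a\|\le 2M\dt$ and therefore $\|q-p\|\le 4M\dt$. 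Choosing $\dt<\min\{\ep,1\}/(4M)$ finishes the argument, and one checks that $\dt$ depends only on $B$ (through $M$), uniformly in the ambient algebra $A$ and in the projection $p$.

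The point that must not be overlooked --- and which I expect to be the only genuine subtlety --- is that $e=1_{B}$ need not coincide with the unit of $A$. If one simply averaged $p$ over the unitary group of $B$, one would land in the corner $eAe$ and lose the part of $p$ supported under $1-e$: already for $A=M_{2}$ and $B=\C e_{11}$, a projection near $e_{22}$ nearly commutes with $e_{11}$ yet is far from every projection of $eAe$. The remedy is precisely the inclusion of the complementary term $(1-e)p(1-e)$ above, which is automatically central for $B$; everything else is bookkeeping. When $1_{B}=1_{A}$ the complementary term drops out and $a$ is just the canonical conditional expectation of $p$ onto $A\cap B^{\prime}$, and the lemma reduces to the standard averaging argument.
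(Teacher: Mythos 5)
Your proof is correct, but it takes a genuinely different route from the paper's. The paper fixes $p$ and moves the matrix units: it forms the approximate matrix units $a_{ij}^{l}=pe_{ij}^{l}p+(1-p)e_{ij}^{l}(1-p)$ inside $pAp\oplus(1-p)A(1-p)$, invokes Lin's stability theorem for the matrix-unit relations to perturb them to exact matrix units $f_{ij}^{l}$ commuting with $p$, then uses a second cited result to produce a unitary $u$ with $\|u-1\|<\ep/2$ and $uf_{ij}^{l}u^{*}=e_{ij}^{l}$, and sets $q=upu^{*}$. You do the dual thing: you fix the matrix units and move $p$, averaging it over $B$ (with the complementary corner $(1-e)p(1-e)$ added to handle $1_{B}\neq 1_{A}$) to land a self-adjoint element of $A\cap B^{\prime}$ within $2M\dt$ of $p$, and then round off by functional calculus. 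Your computations check out: the block averages $T_{l}=\tfrac{1}{n_{l}}\sum_{i,k}e_{ik}^{l}pe_{ki}^{l}$ do commute with all matrix units, the estimate $\|a-p\|\le 2M\dt$ is right, and $\ch(a)\in C^{*}(a)\subseteq A\cap B^{\prime}$ since $\ch(0)=0$. What your approach buys is self-containedness and an explicit, quantitative $\dt=\min\{\ep,1\}/(4M)$, with no dependence on Lin's perturbation machinery, and it works verbatim for non-unital $A$; what the paper's approach buys is the extra conclusion that $q$ is unitarily equivalent to $p$ via a unitary close to $1$ (not needed for the statement, and recoverable anyway once $\|p-q\|<1$). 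Your closing remark about the corner term is well taken and is in fact the same issue the paper sidesteps by working in $pAp\oplus(1-p)A(1-p)$ rather than in $eAe$.
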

\begin{proof}
  Fix some $\ep>0$. Choose $\dt_0$ according to $\ep/2$ as in Lemma 2.5.10 of \cite{Lin2001a}. Choose $\dt_1$ according to $\dt_0$ as in Theorem 2.5.9 of \cite{Lin2001a} (It's easy to see that this lemma generalize to finite dimensional C*-algebras). Set $\dt=\dt_1/2$. Let $p\in A$ be a projection satisfying $\|[p,e_{ij}^{l}]\|<\dt$. Identify $C=pAp\oplus (1-p)A(1-p)$ as an subalgebra of $A$. Let 
\beq
a_{ij}^{l}=pe_{i,j}^{l}p+(1-p)e_{i,j}^{l}(1-p)\in C.
\eeq
Then $\|a_{i,j}^{l}-e_{i,j}^{l}\|<\dt_1$. Hence by Theorem 2.5.9 of \cite{Lin2001a}, there are matrix units $\{f_{i,j}^{l}\}\subset C$ such that $\|f_{i,j}^{l}-e_{i,j}^{l}\|<\dt_0$. By Lemma 2.5.10 of \cite{Lin2001a}, there is a unitary $u\in A$ such that $uf_{i,j}^{l}u^*=e_{i,j}^{l}$ and $\|u-1\|<\ep/2$. Now let $q=upu^*$, then $\|q-p\|<\ep$. We shall show that $q$ commutes with $B$ by showing that $q$ commutes with each $e_{i,j}^{l}$. Since $\{f_{i,j}^{l}\}\subset C$, we have $f_{i,j}^{l}=(1-p)f_{i,j}^{l}(1-p)+pf_{i,j}^{l}p$, for any $i,j,l$. Hence
\beq
qe_{i,j}^{l}=upf_{i,j}^{l}u^*=upf_{i,j}^{l}pu^*=uf_{i,j}^{l}pu^*=e_{i,j}^{l}q
\eeq
\end{proof}

%Reformulating Lemma 2.8 of \cite{Osaka2006} in terms of sequence algebras, we get
%\begin{lem}\label{lift}
%Let $A$ be a real rank zero C*-algebra. Let $\pi\colon A^{\infty}\rightarrow A^{\infty}/J_A$ be the quotient map. Let $p,f_1,\dots,f_n\in A^{\infty}$ be projections such that $%\pi(p)\pi(f_i)=0$, for $1\leq i\leq n$. Then there is a projection $e\in A^{\infty}$ such that:
%\begin{enumerate}[(1)]
%\item $e\leq p$.
%\item $ef_i=0$ for $1\leq i\leq n$.
%\item $\pi(e)=\pi(p)$.
%\end{enumerate}
%\end{lem}

\begin{thm}\label{WTT}
Let $(\af, u)\colon G\cra A$ be a cocycle action with the weak tracial Rokhlin property. If $A$ is a simple C*-algebra tracial rank zero, then $(\af, u)$ actually has the tracial Rokhlin property.
\end{thm}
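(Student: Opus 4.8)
The plan is to stay inside $A_\om$ and to upgrade the Rokhlin positive contractions supplied by the weak tracial Rokhlin property to projections, exploiting that tracial rank zero forces $A$ to have real rank zero. Fix a finite set $K\subseteq G$, an $\ep>0$, and a nonzero $z\in A_+$. Since $A$ is simple with tracial rank zero it has strict comparison, so I may first replace $z$ by a nonzero positive $z_0$ with $z_0\precsim z$, possibly shrinking so as to absorb finitely many error terms later, and apply Definition \ref{WTRP} to the data $(K,\ep,z_0)$. This produces $(K,\ep)$-invariant sets $T_1,\dots,T_n$ and positive contractions $e_1,\dots,e_n\in A_\om$ with $\af_g(e_i)\af_h(e_j)=0$ whenever $(g,i)\neq(h,j)$ and with $1-\sum_{g,i}\af_g(e_i)\pwl z_0$. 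Because $\pwl$ is defined representative-wise (Definition \ref{pwl}), I would lift each $e_i$ to a sequence of positive contractions $(e_i^{(m)})_m$ in $A$ and carry out the construction at each fixed level $m$, reassembling the outcome in $A_\om$ at the end.

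The decisive step is to replace $e_i^{(m)}$ by a projection $q_i^{(m)}$ lying in the hereditary subalgebra $\overline{e_i^{(m)}Ae_i^{(m)}}$ and satisfying $\chi_{[\ld,1]}(e_i^{(m)})\le q_i^{(m)}\le s_i^{(m)}$ for a small fixed $\ld>0$, where $s_i^{(m)}$ denotes the support projection of $e_i^{(m)}$. Such a projection exists precisely because tracial rank zero gives $A$ real rank zero, so every hereditary subalgebra of $A$ admits an approximate identity of projections that can fill in the gap above $\chi_{[\ld,1]}(e_i^{(m)})$. The containment in the hereditary subalgebra is what makes the rest automatic: if $q_i^{(m)}\in\overline{e_i^{(m)}Ae_i^{(m)}}$ then $\af_g(q_i^{(m)})$ lies in the hereditary subalgebra of $\af_g(e_i^{(m)})$, so the approximate orthogonality of the $\af_g(e_i^{(m)})$ along $\om$ forces $\af_g(q_i)\af_h(q_j)=0$ exactly in $A_\om$, and no separate equivariance argument is needed. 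For the comparison requirement, set $q=\sum_{g,i}\af_g(q_i)$; from $q_i\ge \chi_{[\ld,1]}(e_i)$ one gets $1-q\le 1-\sum_{g,i}\af_g\chi_{[\ld,1]}(e_i)$, and since the region where $\sum_{g,i}\af_g(e_i)<\ld$ is contained in the support of $1-\sum_{g,i}\af_g(e_i)$ once $\ld<1$, this yields $1-q\precsim 1-\sum_{g,i}\af_g(e_i)\pwl z_0$, hence $1-q\pwl z$. To secure that $q_i=(q_i^{(m)})_m$ is genuinely central, i.e.\ lies in $A_\om$, I would invoke Lemma \ref{mc}: applying the definition of tracial rank zero to a finite set containing $e_i^{(m)}$ together with an exhausting family $F_m\subseteq A$ produces a finite dimensional subalgebra $B_m$ with matrix units that approximately absorbs $F_m$, and Lemma \ref{mc} moves $q_i^{(m)}$ to a nearby projection commuting with those matrix units, so that $\lim_{m\to\om}\|[q_i^{(m)},a]\|=0$ for every $a\in A$.

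The genuinely delicate point, which I expect to be the main obstacle, is the simultaneous fulfilment of all four constraints on $q_i^{(m)}$: it must be an honest projection (which generically fails inside $C^*(e_i^{(m)})$, as $e_i^{(m)}$ need not have a spectral gap); it must remain inside the hereditary subalgebra of $e_i^{(m)}$ so that orthogonality comes for free; it must be approximately central; and the discarded spectral mass must stay Cuntz-dominated by $z$. Creating a spectral gap to manufacture a projection tends to spoil commutation, while forcing commutation via Lemma \ref{mc} tends to push the projection out of the hereditary subalgebra, and reconciling these two tendencies is the heart of the matter. I would handle it by choosing $B_m$ so that it simultaneously captures $F_m$ and $e_i^{(m)}$, performing both the cut-off and the application of Lemma \ref{mc} inside $B_m$, and absorbing the resulting perturbation errors through the Cuntz-domination established above. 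Once the central projections $q_i\in A_\om$ are in hand, conditions (1) and (2) of Definition \ref{WTRP} hold for the $q_i$ with the same invariant sets $T_1,\dots,T_n$, which is exactly the tracial Rokhlin property.
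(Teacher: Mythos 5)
Your overall strategy---upgrade the positive contractions to projections using real rank zero, keep them in hereditary subalgebras to get orthogonality for free, and use Lemma \ref{mc} together with finite-dimensional approximation to get centrality---is the right general shape, and it is essentially the shape of the paper's proof. But you have correctly identified the crux (``reconciling these two tendencies is the heart of the matter'') and then not resolved it: the sentence ``I would handle it by choosing $B_m$ so that it simultaneously captures $F_m$ and $e_i^{(m)}$, performing both the cut-off and the application of Lemma \ref{mc} inside $B_m$'' is a promise, not an argument. Applying Lemma \ref{mc} to a projection $q_i^{(m)}$ that has already been chosen inside $\overline{e_i^{(m)}Ae_i^{(m)}}$ conjugates it by a unitary close to $1$; the conjugated projection commutes with the matrix units of $B_m$ but there is no reason it remains in the hereditary subalgebra of $e_i^{(m)}$, still dominates $\chi_{[\ld,1]}(e_i^{(m)})$, or keeps the images under distinct $\af_g$ orthogonal. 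Conversely, ``performing the cut-off inside $B_m$'' is not meaningful as stated, since $e_i^{(m)}$ only satisfies $pe_i^{(m)}p\in_\eta B_m$ and a projection belonging to the finite-dimensional algebra $B_m$ does not commute with $B_m$. The paper's resolution is a specific device you do not reach: pass to the relative commutant $C=A^\om\cap B'$, identify it as $(A\cap B')^\om$ via Lemma \ref{mc}, observe that $C$ (hence $\overline{pe_iCe_ip}$) has real rank zero, and choose $q_i$ as a projection in $\overline{pe_iCe_ip}$ with $\|q_ie_iq_i-pe_ip\|$ small. Membership in $B'$ gives exact commutation with $B$ (hence approximate commutation with $F$ via $pFp\subset_\eta B$), and membership in the hereditary subalgebra of $pe_ip$ gives exact orthogonality of the $\af_g(q_i)$ in $A^\om$.

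A second, related gap is your insistence on carrying the pointwise Cuntz domination $1-q\pwl z$ through the perturbation by requiring $q_i\geq\chi_{[\ld,1]}(e_i)$. The projection the paper actually constructs satisfies only $\|q_ie_iq_i-pe_ip\|<\dt$, which does not dominate any spectral cut-down of $e_i$; the smallness of the complement is recovered instead through a trace estimate $\tau(1-q)\leq d_\tau(1-e)+\tau(1-p)+\|pep-qeq\|$ and then converted back to Cuntz comparison via Proposition \ref{edefT}, using strict comparison of $A$ (which the paper first derives from tracial rank zero through tracial approximate divisibility). Your requirement $\chi_{[\ld,1]}(e_i)\leq q_i\leq s_i$ is exactly what conflicts with making $q_i$ commute with $B$, so the trace-based detour is not cosmetic---it is what makes the construction close. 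You should also note that Definition \ref{WTRP} quantifies the invariant sets $T_i$ before $z$ in your level-by-level scheme, which is another reason the reformulations in Propositions \ref{edef} and \ref{edefT} are the right starting point rather than the raw definition.
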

\begin{proof}
If $A$ has tracial rank zero, then $A$ is tracially approximately divisible (Theorem 5.4 of \cite{Lin2007}). If we define tracial $\js$-absorption (See Definition 2.1 of \cite{Hirshberg2013}) using finite-dimensional C*-algebras whose simple component has arbitrary large size instead of matrix algebras, tracial approximate divisibility will imply tracial $\js$-absorption. Theorem 3.3 of \cite{Hirshberg2013} will still hold, using essentially the same proof. Hence $A$ has strict comparison. (It turns out that in the simple case, the aforementioned definition of tracially $\js$-absorbing coincide with Definition 2.1 of \cite{Hirshberg2013}, although we do not need it for this paper). Let $K$ be a finite subset of $G$, let $\ep_0>0$ be given. By Proposition \ref{edefT}, there exists $(K,\ep_0)$-invariant subsets $T_1,\dots, T_n$ and positive contractions $e_1,\dots e_n\in A_{\om}$, such that
\begin{enumerate}[(1)]
\item $\af_g(e_i)\af_h(e_j)=0$, for $g\in T_i$ and $h\in T_j$ such that $g\neq h$ or $i\neq j$.
\item Let $e=\sum_{g\in T_i, 1\leq i\leq n}(\af_g(e_i))$, there is a representative $(e^{(n)})_{n\in\N}$ of $e$ such that
\beq
\lim_{n\rightarrow \om} \max_{\tau\in \TS(A)}d_\tau(1-e^{(n)})<\ep_1/3.
\eeq
\end{enumerate}
The rest of the proof amounts to perturb the $e_i$'s to projections with the desired properties.\\
Let $F$ be a finite subset of $A$. Let $\eta>0$ be arbitrary. Set $M=|T_1|+\cdots+|T_n|$. Choose $\dt$ such that
\beq
\dt=\ep_1/3(M+1).
\eeq
 Since $A$ has tracial rank zero, there is a finite dimensional subalgebra $B\subset A$ with $1_B=p$, such that
\begin{enumerate}[(1)]
\item $\|[p,a]\|<\eta$, for any $a\in F$.
\item $pap\in_{\eta} B$.
\item $\tau(1-p)<\dt$, for any $\tau\in \TS(A)$.
\end{enumerate}
Consider $C=A^{\om}\cap B^{\prime}\supset A_{\om}$. The sequence algebra of a real rank zero C*-algebra is again real rank zero. As a consequence of Lemma \ref{mc}, we get $C=(A\cap B^{\prime})^{\om}$. In particular, the C*-algebras $C$ and $\{\overline{pe_iCe_ip}\}_{1\leq i\leq n}$ have real rank zero. Note here we regard $p\in A$ as constant sequence in $A^{\om}$, which commutes with each $e_i$. Choose a projection $q_i\in \overline{pe_iCe_ip}$ such that $\|q_ie_iq_i-pe_ip\|<\dt$. We first claim that  $\af_g(q_i)\af_h(q_j)=0$, for $g\in T_i$ and $h\in T_j$ such that $g\neq h$ or $i\neq j$. To prove this, since $q_i\in \overline{pe_iCe_ip}$, it implies that for any $\gm>0$, we can find $d_i\in C$ such that $\|q_i-pe_id_ie_ip\|<\gm$. Hence
\begin{align*}
&\|\af_g(q_i)\af_h(q_j)\|\\
\leq &\|\af_g(q_i-pe_id_ie_ip)\af_h(q_j)\|+\|\af_g(pe_id_ie_ip)\af_h(q_j-pe_jd_je_jp)\|+\\
&\|\af_g(pe_id_ie_ip)\af_h(pe_jd_je_jp)\|\\
\leq &\gm+(1+\gm)\gm+0=(2+\gm)\gm.
\end{align*}
Since $\gm$ is arbitrary, we have $\af_g(q_i)\af_h(q_j)=0$.\\
Let $q=\sum_{g\in T_i, 1\leq i\leq n}\af_g(q_i)$. Let $(q^{(m)})_{m\in \N}$ be a representative of $q$ such that each $q^{(m)}$ is a projection. We can estimate:
\begin{align*}
&\lim_{m\rightarrow \om} \max_{\tau\in \TS(A)} \{\tau(1-q^{(m)})\}\\
\leq &\lim_{m\rightarrow \om} \max_{\tau\in \TS(A)} \{\tau(1-q^{(m)}e^{(m)}q^{(m)})\}\\
\leq &\lim_{m\rightarrow \om} \max_{\tau\in \TS(A)} \{\tau(1-pe^{(m)}p)+\|pe^{(m)}p-q^{(m)}e^{(m)}q^{(m)}\|\}\\
\leq &\lim_{m\rightarrow \om} \max_{\tau\in \TS(A)} \{d_\tau(1-e^{(m)})+\tau(1-p)+\|pep-qeq\|\}\\
\leq& \ep_1/3+\dt+M\dt<\ep_1.
\end{align*}
 
Finally, for any $a\in F$, find $b\in B$ such that $\|pap-b\|<\eta$. Then
\begin{align*}
\|q_ia-aq_i\|&=\|pq_ipa-apq_ip\|\\
&\leq \|pq_ipa-pq_ipap\|+\|pq_ipap-papq_ip\|+\|papq_ip-apq_ip\|\\
&\leq \|pq_ib-bq_ip\|+2\eta+\eta+\eta=4\eta.
\end{align*}
Now we choose an increasing sequence of finite subsets $\{F_k\}$ whose union is dense in $A$. Let $\eta=1/k$, we can get a sequence of projections $\{q_{i,k}\}_{1\leq i\leq n, k\in\N}$ in $A^{\om}$ satisfing:
\begin{enumerate}[(1)]
\item $\|q_{i,k}a-aq_{i,k}\|\leq 4/k$, for any $a\in F_k$.
\item $\af_g(q_{i,k})\af_h(q_{j,k})=0$, for any $g\in T_i$ and $h\in T_j$ such that $g\neq h$ or $i\neq j$.
\item Let $q_k=\sum_{g\in T_i, 1\leq i\leq n}\af_g(q_{i,k})$. There is a representative  $(q^{(m)}_k)_{m\in \N}$ of $q_k$, such that
\beq
\lim_{m\rightarrow \om} \max_{\tau\in \TS(A)}\tau(1-q^{(m)}_k)<\ep_1.
\eeq
\end{enumerate}
 We can then use Cantor's diagonal argument to select projections $p_i$ in $A_{\om}$ satisfying the conditions in Proposition \ref{edefT}, therefore $\af$ has the tracial Rokhlin property.
\end{proof}

\section{Examples of actions with the weak tracial Rokhlin property}
\begin{dfn}
Let $G$ be a discrete group, let $A$ and $B$ be unital C*-algebras and let $\af\colon G\cra A$ and $\bt\colon G\cra B$ be actions of $G$ on $A$ and $B$. We say $B$ admits an approximate equivariant central unital homomorphism from $A$ if there is a sequence of unital completely positive maps $\phi_i\colon A\rightarrow B$ such that for any $a, a_1\in A$ and $b\in B$, we have
\begin{enumerate}[(1)]
\item $\lim_{i\rightarrow \infty} \phi_i(a)\phi_i(a_1)-\phi_i(aa_1)=0.$
\item $\lim_{i\rightarrow \infty} \phi_i(a)b-b\phi_i(a)=0.$
\item $\lim_{i\rightarrow \infty} \phi_i(\af_g(a))-\bt_g(\phi_i(a))=0.$
\end{enumerate}
\end{dfn}
It's immediate from the definition that an approximate equivariant central unital homomorphism from $A$ to $B$ induces an equivariant unital homomorphism from $A$ to the central sequence algebra $B_{\om}$.

\begin{thm}\label{bshift}
Let $\af\in \Act_G(A)$, where $G$ is amenable. Let $X$ be a compact metrizable space with a Borel probability measure $\mu$. Let $\bt\colon G\cra (X,\mu)$ be a free and measure-preserving action which is also a topological action (acts on $X$ by homeomorphisms). It induces an action on $C(X)$. Let $\tau$ be a tracial state on $A$. Suppose that there are approximate equivariant central unital homomorphisms $\iota_i\colon C(X)\rightarrow A$ with $\mu_i$ the measure induced by $\tau\circ\iota_i$. If $\mu$ is the $\om$-limit of $(\mu_i)_{i\in \N}$, then $\af$ has the weak tracial Rokhlin property with respect to $\tau$. Furthermore, if $X$ is totally disconnected, then $\af$ has the tracial Rokhlin property. 
\end{thm}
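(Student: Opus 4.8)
The plan is to transport a Rokhlin tower for the dynamical system $(X,\mu,\bt)$ into the central sequence algebra $A_\om$ by means of the equivariant unital homomorphism $\iota_\om\colon C(X)\to A_\om$ induced by the maps $\iota_i$ (recall that, as noted right after the definition of an approximate equivariant central unital homomorphism, the family $(\iota_i)$ does induce a genuine unital equivariant $\ast$-homomorphism into the central sequence algebra). First I would invoke the Ornstein--Weiss Rokhlin lemma for free measure-preserving actions of countable amenable groups: given $K$, $\ep_0$ and a small $\dt>0$, it yields $(K,\ep_0)$-invariant (F\o lner) sets $T_1,\dots,T_n$ and Borel base sets $B_1,\dots,B_n\subset X$ so that the translates $\{\bt_g(B_i)\}_{g\in T_i,\,1\le i\le n}$ are pairwise disjoint and $\mu\big(X\setminus\bigsqcup_{g,i}\bt_g(B_i)\big)<\dt$. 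These $T_i$ are exactly the invariant sets demanded by Definition \ref{WTRPT}; freeness is what makes the Rokhlin lemma available, and amenability is what makes the $T_i$ invariant.

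Next I would realize the tower by continuous functions. By inner regularity of $\mu$, pick compacta $C_i\subset B_i$ with $\mu(B_i\setminus C_i)$ small; the finitely many sets $\{\bt_g(C_i)\}$ stay pairwise disjoint, so by normality and continuity of the action each $C_i$ can be thickened to an open $O_i\supset C_i$, with $\mu(O_i\setminus C_i)$ small, so that $\{\bt_g(O_i)\}_{g,i}$ remain pairwise disjoint. Let $f_i\in C(X)$ be a Urysohn function with $f_i=1$ on $C_i$ and $\supp f_i\subset O_i$. Since $\bt_g(\supp f_i)$ are pairwise disjoint, $\bt_g(f_i)\,\bt_h(f_j)=0$ in $C(X)$ whenever $(g,i)\ne(h,j)$, and putting $e_i=\iota_\om(f_i)\in A_\om$, equivariance of $\iota_\om$ gives $\af_g(e_i)\af_h(e_j)=\iota_\om\big(\bt_g(f_i)\bt_h(f_j)\big)=0$ exactly, so (1) holds. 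Writing $f=\sum_{g\in T_i,i}\bt_g(f_i)$, the defect $\int_X(1-f)\,d\mu$ is bounded by the measure of the uncovered set plus that of the transition skirts $O_i\setminus C_i$, hence can be made arbitrarily small.

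The main obstacle is condition (2): I must produce a representative of $1-e$ with $d_\tau$-value $<\ep_1$. The naive choice $(\iota_n(1-f))_n$ fails, because the $\iota_n$ are only approximately multiplicative and can manufacture spurious small spectrum; lower semicontinuity of $d_\tau$ only yields $\mu(\{f<1\})\le\lim_{n\to\om}d_\tau(\iota_n(1-f))$, which is the wrong direction. The fix is to sharpen the bumps with a fixed threshold. Fix $\sm=1/2$ and set $g_\sm(t)=\min\{1,t/(1-\sm)\}$; since $g_\sm(0)=0$ and the $\bt_g(f_i)$ have disjoint supports, $g_\sm(f)=\sum_{g,i}\bt_g\big(g_\sm(f_i)\big)$, so replacing $f_i$ by the positive contraction $g_\sm(f_i)$ preserves all the orthogonality while enlarging the cover. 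With $\hat e=\iota_\om(g_\sm(f))$, the sequence $\big(1-g_\sm(\iota_n(f))\big)_n$ represents $1-\hat e$ (here $g_\sm(\iota_n(f))$ and $\iota_n(g_\sm(f))$ agree in the limit by approximate multiplicativity). As $(1-g_\sm)(t)=0$ for $t\ge 1-\sm$, the element $1-g_\sm(\iota_n(f))$ has the same support projection as $(\iota_n(1-f)-\sm)_+$, so the standard inequality $d_\tau((a-\sm)_+)\le\sm^{-1}\tau(a)$ gives
\beq
\lim_{n\to\om}d_\tau\big(1-g_\sm(\iota_n(f))\big)\le \sm^{-1}\lim_{n\to\om}\tau\big(\iota_n(1-f)\big)=\sm^{-1}\int_X(1-f)\,d\mu,
\eeq
the last equality because $\mu$ is the $\om$-limit of the $\mu_n$. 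Choosing the tower and skirts so that $\int_X(1-f)\,d\mu<\sm\ep_1$ makes the right side $<\ep_1$, which establishes the weak tracial Rokhlin property with respect to $\tau$.

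Finally, when $X$ is totally disconnected I would run the same argument with clopen data: approximate each $B_i$ in measure by a clopen set and shrink it so that the clopen translates $\{\bt_g(B_i')\}$ are pairwise disjoint. Then each $f_i=\chi_{B_i'}$ is already a projection in $C(X)$, so $e_i=\iota_\om(\chi_{B_i'})$ are projections in $A_\om$ with exactly orthogonal translates, no sharpening is required, and $1-e=\iota_\om(\chi_{X\setminus E})$ with $E=\bigsqcup_{g,i}\bt_g(B_i')$. Representing $1-e$ by genuine projections $p_n$ close to $\iota_n(\chi_{X\setminus E})$ (available since $\iota_n(\chi_{X\setminus E})$ is close to a projection by approximate multiplicativity), one gets $\lim_{n\to\om}d_\tau(p_n)=\lim_{n\to\om}\tau(p_n)=\mu(X\setminus E)<\ep_1$, yielding the full tracial Rokhlin property. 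I expect step three to be the crux, the totally disconnected case being easier precisely because projections carry no small-spectrum pathology, so that $d_\tau$ collapses to $\tau$.
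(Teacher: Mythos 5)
Your proof is correct and follows essentially the same route as the paper: the Ornstein--Weiss Rokhlin lemma produces the tower, Urysohn functions supported on separated open thickenings become exactly orthogonal under the induced equivariant unital homomorphism $\iota_\om\colon C(X)\rightarrow A_\om$, and weak-$*$ convergence of the $\mu_n$ controls the size of the remainder, with the clopen variant handling the totally disconnected case. The one substantive difference is your threshold step: the paper passes directly from $d_\tau(1-\iota_k(f))$ to $d_{\mu_k}(1-f)$, which is only literally valid when the $\iota_k$ are honest homomorphisms (as in the Bernoulli-shift application), so your cut-down by $g_\sm$ together with the Chebyshev-type bound $d_\tau((a-\sm)_+)\leq \sm^{-1}\tau(a)$ is a worthwhile refinement that makes the estimate robust for merely approximately multiplicative u.c.p.\ maps.
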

\begin{proof}
Let $K\in G$ be a finite subset, let $\ep_0, \ep_1>0$. Since $\bt\colon G\cra X$ is a free and measure preserving action, by Theorem 5 of \cite{Ornstein1987} (in page 59, section II.2) and the remark after the proof, there exists $(K,\ep_0)$-invariant subsets $T_1, T_2,\dots, T_n$ and measurable subsets $B_1, \dots, B_n$ such that:
\begin{enumerate}[(1)]
\item $gB_i$ and $hB_j$ are disjoint, for $g\in T_i$ and $h\in T_j$ such that $g\neq h$ or $i\neq j$.
\item $\mu(X\backslash \bigcup_{g\in T_i, 1\leq i\leq n}gB_i)<\ep_1$.
\end{enumerate}
Any finite measure on a compact metrizable space is regular, without loss of generality we may assume that each $B_i$ is compact. Now we are going to construct open sets $U_i\supset B_i$ such that $g\overline{U_i}$ and $h\overline{U_j}$ are disjoint, for $g\in T_i$ and $h\in T_j$ with $g\neq h$ or $i\neq j$.\\
Since $X$ is a normal topological space, for any $i$ and any $g\in T_i$, we can inductively find an open set $V_{g,i}\supset gB_i$ such that $\overline{V_{g,i}}$ and $\overline{V_{h,j}}$ are disjoint, for any $g\in T_i$ and $h\in T_j$ such that $g\neq h$ or $i\neq j$. For each $i$, define
\beq
U_i=\bigcap_{g\in T_i} g^{-1}V_{g,i}.
\eeq
It's easy to see from our construction that $U_i$ satisfies the requirement mentioned before. Furthermore, if $X$ is totally disconnected and compact, we may choose $V_{g,i}$'s to be clopen sets. It's clear from the construction that $U_i$'s are clopen sets as well.\\
Repeat the above argument, replacing $B_i$ by $\overline{U_i}$, we can get open sets $W_i\supset \overline{U_i}$ such that $g\overline{W_i}$ and $h\overline{W_j}$ are disjoint, for any $g\in T_i, h\in T_j$ such that $(g,i)\neq (h,j)$.\\
Now by Urysohn's lemma, we can find continuous functions $f_i\colon X\rightarrow [0,1]$ which is $1$ on $\overline{U_i}$ and $0$ outside $W_i$. If $X$ is totally disconnected, we let $f_i$ to be the characteristic function on the clopen set $W_i$. Let $e_i=(\iota_k(f_i))_{k\in \N}$, for $1\leq i\leq n$. Now we can see that $\{e_i\}_{1\leq i\leq n}$ are positive contractions (or projections if $X$ is totally disconnected) in $A_{\om}$ such that
\begin{enumerate}[(1)]
\item $\af_g(e_i)$ and $\af_h(e_j)$ are disjoint, for $g\in T_i$ and $h\in T_j$ such that $g\neq h$ or $i\neq j$.
\item With $f=\sum_{g\in T_i, 1\leq i\leq n}\af_g(f_i)$, and $e^{(k)}=\iota_k(f)$, we see that $(e^{(k)})_{k\in \N}$ is a representative of $e=\sum_{g\in T_i, 1\leq i\leq n}e_i$ such that
\begin{align*}
&\lim_{k\rightarrow \om} d_\tau(1-e^{(k)})=\lim_{k\rightarrow \om} d_{\mu_i}(1-f)\\
=&\lim_{k\rightarrow \om}\mu_i(\{{x\in X\,\vert\, 1-f(x)\neq 0}\})\\
\leq& \lim_{k\rightarrow \om}\mu_i(X\backslash \bigcup_{g\in T_i, 1\leq i\leq n}gU_i)\\
\leq&  \mu(X\backslash \bigcup_{g\in T_i, 1\leq i\leq n}gU_i)<\ep_1.
\end{align*}
\end{enumerate}
The second equality follows from Proposition I.2.1 of \cite{Blackadar1982}. By Proposition \ref{edefT}, the action $\af$ has the weak tracial Rokhlin property with respect to $\tau$, and has the tracial Rokhlin property with respect to $\tau$ if $X$ is further assumed to be totally disconnected.
\end{proof}

Let $A$ be a separable unital C*-algebra and $G$ be a countable discrete group. Let $\otimes_G A$ be the minimal tensor product of countably many copies of $A$ indexed by the elements of $G$. The left multiplication of $G$ on itself induces an action on $\otimes_G A$ (permuting the indices), which we shall call the Bernoulli shift on $\otimes_G A$. We can generalize Corollary of \cite{Phillips2012} to actions of amenable groups.
\begin{prp}
Let $A$ be a unital C*-algebra. Let $\tau\in \TS(A)$ such that, with $\pi_{\tau}$ being the associated GNS representation, the von Neumann algebra $\pi_\tau(A)^{''}$ has no minimal projections. When $G$ is infinite and amenable, then the Bernoulli shift on $\otimes_G A$ has the weak tracial Rokhlin property with respect to $\tau$.
\end{prp}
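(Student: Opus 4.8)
The plan is to deduce the statement from Theorem \ref{bshift} applied to a Bernoulli system over a diffuse base. Write $\sigma\colon G\cra\otimes_G A$ for the Bernoulli shift and $\bar\tau=\tau^{\otimes G}$ for the product trace on $\otimes_G A$ (which is tracial and shift-invariant); the conclusion is understood with respect to $\bar\tau$. I would take $Y=[0,1]$ with $\nu$ Lebesgue measure, and set $X=Y^G=[0,1]^G$, $\mu=\nu^{\otimes G}$, with $\bt\colon G\cra X$ the coordinate shift. Since $G$ is infinite and $\nu$ is non-atomic, $\bt$ is measure-preserving, acts by homeomorphisms, and is essentially free, so it meets the hypotheses of Theorem \ref{bshift}. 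Under the identification $C(X)=\otimes_G C[0,1]$, the shift $\bt$ induces on $C(X)$ exactly the permutation of tensor factors that $\sigma$ induces on $\otimes_G A$. Thus it remains only to produce approximate equivariant central unital homomorphisms $\iota_i\colon C(X)\to\otimes_G A$ whose induced measures converge to $\mu$.

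First I would build the single-coordinate block. Because $\pi_\tau(A)''$ has no minimal projections it is diffuse, hence it contains a self-adjoint $h$ with $0\le h\le1$ whose $\tau$-distribution is $\nu$ (build a dyadic system of projections by halving traces, embedding $L^\infty[0,1]$ on which $\tau$ restricts to Lebesgue measure). By Kaplansky density and separability of $A$, choose $h_m\in A$ with $0\le h_m\le1$ and $\|h_m-h\|_{2,\tau}\to0$, and let $\psi_m\colon C[0,1]\to A$ be the genuine unital $*$-homomorphism $f\mapsto f(h_m)$. Continuity of continuous functional calculus under strong convergence gives $\tau\circ\psi_m\to\nu$ weak-$*$.

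Next I would assemble the maps and push their supports to infinity to force centrality. For $c\in G$ let $R_c$ be the coordinate right-translation automorphism of $C(X)$, and set $\iota_i=(\otimes_G\psi_{m_i})\circ R_{c_i}$, where $\otimes_G\psi_m\colon C(X)\to\otimes_G A$ is the infinite tensor-product homomorphism. Each $\iota_i$ is a genuine unital $*$-homomorphism, so condition (1) is exact; because right translation commutes with the left shift and $\otimes_G\psi_m$ intertwines coordinate-permutation with factor-permutation, one checks $\iota_i\circ\bt_g=\sigma_g\circ\iota_i$ exactly, giving (3). For a cylinder function $f$ supported on a finite coordinate set $S$, the element $\iota_i(f)$ lives in the tensor factors indexed by $Sc_i$; since $G$ is infinite, a diagonal choice of $m_i\to\infty$ together with $c_i$ (avoiding the finitely many $c$ with $Sc\cap T\ne\emptyset$ for the relevant $f$ and $b$) makes $\iota_i(f)$ eventually commute with any fixed $b\in\otimes_G A$, yielding the approximate centrality (2). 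Finally $\bar\tau\circ\iota_i=(\tau\circ\psi_{m_i})^{\otimes G}$ because $\bar\tau$ is permutation-invariant, so the induced measures are the product measures $\nu_i^{\otimes G}$ with $\nu_i=\tau\circ\psi_{m_i}\to\nu$, and these converge weak-$*$ to $\mu=\nu^{\otimes G}$; hence $\mu=\om\text{-}\lim\mu_i$. Theorem \ref{bshift} then delivers the weak tracial Rokhlin property with respect to $\bar\tau$.

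The main obstacle is the centrality requirement (2): a uniform tensor-product homomorphism is automatically equivariant and multiplicative but never central, since the image of a coordinate lands in the corresponding tensor factor. The device that resolves this is the right-translation $R_{c_i}$, which commutes with the left shift (preserving exact equivariance) while allowing the images to be pushed into tensor factors disjoint from any prescribed finite region, and this is precisely where infiniteness of $G$ is used. A secondary technical point is verifying $\nu_i^{\otimes G}\to\nu^{\otimes G}$ on cylinder functions and checking that Theorem \ref{bshift}'s freeness hypothesis is needed only in the essential (a.e.) sense, which is all its Ornstein--Weiss input requires.
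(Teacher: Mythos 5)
Your proof is correct and follows the same overall architecture as the paper's: both reduce the statement to Theorem \ref{bshift} applied to the Bernoulli shift on $X=[0,1]^G$ with a product measure, and both obtain approximate centrality of the maps $C(X)\to\otimes_G A$ by composing a tensor-product homomorphism with right index translations, exploiting that $G$ is infinite so the images can be pushed off any finite set of tensor factors; freeness of the shift on $(X,\mu)$ is likewise checked only almost everywhere via Fubini in both arguments. The one genuine difference is the single-coordinate step. The paper invokes Proposition 2.8 of \cite{Phillips2012}, which produces an honest element $a\in A$ with $0\le a\le 1$ whose spectral measure $\mu_0$ under $\tau$ is atomless (not necessarily Lebesgue); consequently every map $\phi_k$ induces the \emph{same} measure $\mu=\mu_0^{\otimes G}$ on $X$, and the $\om$-limit hypothesis of Theorem \ref{bshift} is satisfied trivially. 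You instead build an element of $\pi_\tau(A)''$ with exactly Lebesgue distribution from diffuseness and pull it back to $A$ approximately via Kaplansky density, which forces you to use the full strength of the $\om$-limit clause (product measures $\nu_i^{\otimes G}\to\nu^{\otimes G}$) and to vary the single-site homomorphism $\psi_{m_i}$ along with the translation $c_i$; this works, but costs an explicit appeal to separability of $A$ and a few extra routine verifications (strong continuity of functional calculus on bounded sets, convergence of product measures on cylinder functions) that the paper's citation makes unnecessary. Both routes are valid: the paper's is more economical, while yours is more self-contained in that it derives the diffuse element directly from the stated hypothesis on $\pi_\tau(A)''$ rather than quoting an external proposition.
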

\begin{proof}
By Proposition 2.8 of \cite{Phillips2012}, there is some $a\in A$ with $0\leq a\leq 1$ and such that the spectral measure $\mu_0$ on $[0,1]$, defined by $\int_0^1 f\,d\,{\mu_0}=\tau(f(a))$ for $f\in C([0,1])$, satisfies $\mu_0(\{t\})=0$ for all $t\in [0,1]$. By functional calculus, there is an unital embedding $\iota\colon C([0,1])\rightarrow A$ defined by $f\rightarrow f(a)$. Let $X=\prod_G [0,1]$, the product of countably many copies of $[0,1]$ indexed by elements of $G$. There is a natural isomorphism between $C(X)$ and $\otimes_GC([0,1])$. We use $f_1^{(g_1)}\otimes f_2^{(g_2)}\otimes \cdots\otimes f_n^{(g_n)}$ to mean the elementary tensor in $\otimes_G A$ which is $f_i$ in $g_i$-th tensor factor for $1\leq i\leq n$ and is $1=1_A$ in all other places. Let $\af$ be the Bernoulli shift on $\otimes_G C([0,1])$ determined by
\beq
 \af_g(f_1^{(g_1)}\otimes f_2^{(g_2)}\otimes \cdots\otimes f_n^{(g_n)})=f_1^{(gg_1)}\otimes f_2^{(gg_2)}\otimes \cdots\otimes f_n^{(gg_n)}
\eeq
Using the natural isomorphism between $C(X)$ and $\otimes_GC([0,1])$ and the duality between actions on $C(X)$ and actions on $X$, we get an induced action $\bt$ on $X$ defined by
\beq
\bt_g(x_1^{(g_1)}, x_2^{(g_2)},  \cdots, x_n^{(g_n)}, \cdots)=(x_1^{(gg_1)}, x_2^{(gg_2)},  \cdots, x_n^{(gg_n)},\cdots)
\eeq
Let $\mu$ be the product measure on $X$ induced by $\mu_0$. It's easy to see that $\bt$ is measure preserving. We now check that it is also free. Given $g\in G\backslash\{1\}$, let $S=\{x\in X\,\vert\, \bt_g(x)=x\}$, we have $S=\{(x_h)_{h\in G}\,\vert\, x_h=x_k, \forall h,k\in G\}$. Using Fubini's theorem plus the assumption that single 
 point set in $[0,1]$ has zero measure, we get $\mu(S)=0$.\\
 List the elements in $G$ by $h_1, h_2, \dots$, let $\phi_k\colon C(X)\rightarrow \otimes_G A$ to be the 'right index shift by $h_k$', determined by 
\beq
f_1^{(g_1)}\otimes f_2^{(g_2)}\otimes \cdots\otimes f_n^{(g_n)}\rightarrow f_1(a)^{(g_1h_k)}\otimes f_2(a)^{(g_2h_k)}\otimes \cdots\otimes f_n(a)^{(g_nh_k)}
\eeq

We can  see that $\{\phi_n\}_{n\in \N}$ is a sequence of equivariant unital homomorphisms, we now check that it is approximately central. Let $f\in C(X)$ and $b\in \otimes_G(A)$. Without loss of generality we may assume that $f,b$ are elementary tensors:
\beq
f=f_1^{(g_1)}\otimes f_2^{(g_2)}\otimes \cdots\otimes f_n^{(g_n)}, \quad\quad b=b_1^{(h_1)}\otimes b_2^{(h_2)}\otimes \cdots\otimes b_n^{(h_n)}.
\eeq
There are only finitely many $g\in G$ such that $g_ig=h_j$ for some $1\leq i\leq j\leq n$, hence $lim_{k\rightarrow \infty} \phi_k(a)b-b\phi_k(a)=0$.\\
By Theorem \ref{bshift}, the action $\af$ has the weak tracial Rokhlin property with respect to $\tau$.
\end{proof}

In particular, for the Jiang-Su algebra $\js$, there is a central embedding of $C([0,1])$ such that the unique trace $\tau$ on $\js$ induces the Lebesgue measure on $[0,1]$. Hence we have:
\begin{cor}\label{bfonz}
If $G$ is countable, discrete and amenable, then the Bernoulli shift on $\otimes_G \js\cong \js$ has the weak tracial Rokhlin property.
\end{cor}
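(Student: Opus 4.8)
The plan is to obtain this as a specialization of the preceding Proposition to $A=\js$, after two reductions that make the hypotheses easy to read off. First, since $\js$ is strongly self-absorbing, every finite or countably infinite tensor power of $\js$ is again $\js$; in particular $\otimes_G\js\cong\js$ for every countable $G$, so the statement really concerns a Bernoulli shift on a copy of $\js$. Second, $\js$ is monotracial with unique trace $\tau$ and has strict comparison, so by Proposition~\ref{edefT} the (unqualified) weak tracial Rokhlin property coincides with the weak tracial Rokhlin property with respect to $S=\TS(\js)=\{\tau\}$ in the sense of Definition~\ref{WTRPT}. Hence it suffices to produce the property relative to the single trace $\tau$, which is exactly the conclusion of the preceding Proposition.

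It then remains to verify the hypotheses of that Proposition for the pair $(\js,\tau)$. The only nonformal condition is that $\pi_\tau(\js)''$ have no minimal projections. Since $\tau$ is the unique trace of a simple, infinite dimensional, $\js$-stable algebra, its weak closure in the GNS representation is a finite factor of type $\mathrm{II}_1$ (indeed the hyperfinite factor $\mathcal{R}$), and a $\mathrm{II}_1$ factor contains no minimal projections. Concretely, the central embedding $\iota\colon C([0,1])\rightarrow\js$ with $\tau\circ\iota$ equal to Lebesgue measure, recalled just before the Corollary, already exhibits a self-adjoint element with diffuse spectral measure, which rules out minimal projections. With this checked, for every \emph{infinite} amenable $G$ the preceding Proposition applies verbatim and yields the weak tracial Rokhlin property with respect to $\tau$, which by the first paragraph is the full weak tracial Rokhlin property.

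The one case the preceding Proposition does not immediately cover is finite nontrivial $G$, since its proof manufactures approximately central equivariant homomorphisms by shifting the support index arbitrarily far away in $G$, which requires $|G|=\infty$. To treat finite $G$ I would instead exploit the infinite self-absorption inside each tensor factor: write $\otimes_G\js\cong\otimes_G(\otimes_{\N}\js)\cong\otimes_{G\times\N}\js$, with the shift acting only on the first coordinate. Embedding the $h$-th copy of $C([0,1])$ in $C(X)=\otimes_G C([0,1])$ into the factor indexed by $(h,k)$ via $\iota$ produces a sequence of equivariant unital homomorphisms $\phi_k\colon C(X)\rightarrow\otimes_G\js$ whose ranges, for large $k$, sit in tensor factors disjoint from any fixed finitely supported element, so the $\phi_k$ are approximately central; moreover $\tau\circ\phi_k$ induces the product measure for every $k$. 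Feeding these into Theorem~\ref{bshift}, whose hypotheses require only that $\bt$ be free and measure preserving (which holds for any nontrivial $G$ by the same Fubini argument, the fixed-point set of $\bt_g$ being constant on orbits of size at least two and hence null), gives the property for finite $G$ as well. The trivial group is immediate, since the identity automorphism satisfies Definition~\ref{WTRP} with $T=\{1\}$ and $e_1=1$.

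I expect the genuine content, and the main obstacle, to be precisely this finite-group adaptation, that is, building the approximately central equivariant maps without the shift-to-infinity trick and confirming freeness of $\bt$ for finite $G$. For infinite $G$ the Corollary is essentially a translation of the Proposition, the only substantive check being the routine fact that the tracial completion of $\js$ is a $\mathrm{II}_1$ factor and therefore has no minimal projections.
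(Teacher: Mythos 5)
Your argument is correct, and for infinite $G$ it follows the paper's route exactly: the paper obtains the corollary from the preceding Proposition by observing that $\js$ admits a central unital embedding of $C([0,1])$ carrying the unique trace to Lebesgue measure (equivalently, that $\pi_\tau(\js)''$ is the hyperfinite $\mathrm{II}_1$ factor, hence has no minimal projections). Your preliminary reduction --- that for a monotracial algebra with strict comparison the weak tracial Rokhlin property with respect to $\tau$ coincides with the unqualified one, via Proposition \ref{edefT} --- is a bridge the paper leaves implicit but which is genuinely needed, since the Proposition only delivers the property relative to $\tau$. Where you add something real is the finite-group case: the Proposition is stated only for infinite $G$ (and its proof of approximate centrality, shifting the support index off to infinity, uses this), whereas the Corollary allows finite $G$ and is later invoked for arbitrary countable amenable groups. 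Your fix --- rewriting $\otimes_G\js\cong\otimes_{G\times\N}\js$ with the shift acting on the first coordinate and sending the $h$-th copy of $C([0,1])$ into the factor indexed by $(h,k)$ for $k$ large --- does produce equivariant approximately central unital maps, and the freeness of the product action for finite $G$ follows from the same Fubini argument; the only point worth making explicit is that feeding this into Theorem \ref{bshift} for finite $G$ requires the Rokhlin-tower input with $T_1=G$ and a measurable fundamental domain for the free action, which exists and makes the covering condition exact rather than approximate. One small caveat: a single self-adjoint element with diffuse spectral measure does not by itself rule out minimal projections in a general von Neumann algebra (consider $\C\oplus L^\infty[0,1]$); it does here only because $\pi_\tau(\js)''$ is a factor, so your primary argument (that it is a $\mathrm{II}_1$ factor) is the one to keep.
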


%\begin{prp}
%Let $G$ be a countable discrete amenable group, let $A$ be a unital simple infinite dimensional C*-algebra, and let $\af\colon G\rightarrow \Aut(A)$ be an action with the weak tracial Rokhlin property. Suppose that the tracial %state space $T(A)$ has finitely many extreme points. Then for any $g\in G\backslash \{1\}$, there is a canonical bijection between $T(A\rtimes_{\af_g}\Z)$ and $T^{\af_g}(A)$ (The set of all $\af_g$ invariant trace).
%\end{prp}
%\begin{proof}
%Let $E\colon A\rtimes_{\af_g}\Z\rightarrow A$ be the canonical conditional expectation such that $E(\sum_g a_gu_g)=a_1$. Let $\iota\colon A\rightarrow A\rtimes_{\af_g}\Z$ be the canonical embedding. Let $t(E)$ and $t(\iota)$ b%e the induced map on the corresponding tracial state spaces. Now $t(\iota)\circ t(E)=t(E\circ \iota)=Id$, we just need to check that $t(E)\circ t(\iota )=t(\iota\circ E)=Id$, which amounts to show that $\tau(aU^n)=0$, for $n\ne%q 0$, any $a\in A$ and any $\tau\in T(A\rtimes_{\af_g}\Z)$.\\

%Since $\af$ has the tracial Rokhlin property, there exists a 
%Let $\tau_1,\tau_2,\dots,\tau_n$ be the extreme tracial states in $A$. 
%\end{proof}
A cocycle action $(\af, u)\colon G\cra A$ is called strongly outer, if and only if for any $g\neq 1$ and any $\tau\in \TS^{\af_g}(A)$, the weak extension of $\af_g$ on $\pi_{\tau}(A)
''$ is not weakly inner.\\

\begin{prp}\label{so}
Let $G$ be a countable discrete amenable group, let $A$ be a unital simple infinite dimensional C*-algebra, let $(\af, u)\colon G\cra A$ be an action with the weak tracial Rokhlin property. Suppose that the tracial state space $\TS(A)$ has finitely many extreme points. Then $\af$ is strongly outer. 
\end{prp}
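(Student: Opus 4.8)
The plan is to argue by contradiction. Suppose $\af$ is not strongly outer, so there is $g_0\neq 1$ and a tracial state $\tau\in\TS^{\af_{g_0}}(A)$ for which the weak extension $\overline{\af_{g_0}}$ to $M=\pi_\tau(A)''$ is weakly inner, say $\overline{\af_{g_0}}=\Ad v$ for some unitary $v\in M$. First I would exploit the hypothesis on $\TS(A)$: writing $\tau=\sum_{k\in S}\ld_k\tau_k$ as a (finite) convex combination of extreme traces, one gets $M\cong\bigoplus_{k\in S}M_k$ with $M_k=\pi_{\tau_k}(A)''$ a $\mathrm{II}_1$ factor (simplicity and infinite-dimensionality of $A$ rule out type $\mathrm{I}$ summands). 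Since $\af_{g_0}$ preserves $\tau$ it permutes the extreme traces $\{\tau_k\}$, hence permutes the blocks $M_k$; but a unitary $v=\bigoplus_k v_k\in M$ is block diagonal, so $\Ad v$ fixes every block. Thus the permutation is trivial on $S$, each $\tau_k$ is $\af_{g_0}$-invariant, and $\overline{\af_{g_0}}|_{M_k}=\Ad v_k$ is inner on the factor $M_k$. Fix one such $k$ and write $N=M_k$, $\sm=\tau_k$.

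Next I would use the standard principle that an inner automorphism of a finite factor acts trivially on its central sequence algebra. Concretely, $\af_{g_0}$ extends to the tracial ultrapower $N^\om$ by $\Ad v$ (with $v$ the constant sequence), and for any $w$ in the asymptotic centralizer $N_\om=N^\om\cap N'$ we have $\af_{g_0}(w)=vwv^*=w$, since $w$ commutes with $v\in N$. So $\af_{g_0}$ restricts to the identity on $N_\om$, and the remaining task is to produce from the weak tracial Rokhlin property a nonzero $w\in N_\om$ with $\af_{g_0}(w)\neq w$. The Rokhlin data does this naturally: the positive contractions $e_i\in A_\om$ and their translates map to central sequences under the canonical $\ast$-homomorphism $A_\om\to N_\om$ (a norm-null sequence is $\|\cdot\|_{2,\sm}$-null, and commutation with the $\|\cdot\|_{2,\sm}$-dense subalgebra $A$ passes to $N$), and because $\af$ is a genuine action on $A_\om$ the element $w=\af_g(e_i)$ satisfies $\af_{g_0}(w)=\af_{g_0g}(e_i)$, which is orthogonal to $w$ whenever $g$ and $g_0g$ both lie in the same tower $T_i$. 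Choosing $K\ni g_0,g_0^{-1}$ and $\ep_0<1$ guarantees, by $(K,\ep_0)$-invariance, that such a "good" level $g$ exists.

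The main obstacle is showing that some \emph{good} level $w=\af_g(e_i)$ is nonzero in $N_\om$, i.e.\ that $\sm(\af_g(e_i))>0$: a priori the Rokhlin mass could concentrate on the few boundary levels that $\af_{g_0}$ moves out of the tower, and $\af_{g_0}$-invariance of $\sm$ alone only forces the level masses to be constant along $\langle g_0\rangle$-orbits, not uniform. This is exactly where I would use the finiteness of the extreme boundary a second time: the stabilizer $H=\{g\in G:\sm\circ\af_g=\sm\}$ has finite index in $G$ (its index is at most the number of extreme traces), it contains $g_0$, and $\sm$ is fully $H$-invariant. Passing to the amenable subgroup $H$ and proving that $\af|_{H}$ again has the weak tracial Rokhlin property with respect to $\sm$ — the essential combinatorial point, via the Ornstein--Weiss machinery already used in Theorem \ref{bshift}, is that $H$-Følner towers can be extracted from the $G$-towers — one obtains Rokhlin towers indexed by subsets $S\subset H$. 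Since $\sm\circ\af_h=\sm$ for every $h\in H$, every level then has the \emph{same} mass $\sm(e_i)$, so the covering condition $\sm(e)>1-\ep_1$ forces $\sm(e_i)\approx 1/|S|>0$; a good level, of which there are the vast majority, is then nonzero in $N_\om$. Taking $w$ to be such a level completes the contradiction, and hence $\af$ is strongly outer.
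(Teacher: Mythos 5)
Your overall architecture is sound and genuinely different from the paper's: you reduce to a single $\mathrm{II}_1$ factor $N=\pi_{\sm}(A)''$, use the fact that an inner automorphism acts trivially on the asymptotic centralizer $N_\om$, and try to exhibit a Rokhlin level whose image in $N_\om$ is nonzero and is moved to an orthogonal element. The block-decomposition of $M$, the triviality of $\Ad v$ on central sequences, and the passage $A_\om\rightarrow N_\om$ are all fine. But the step you yourself flag as the ``main obstacle'' is exactly where the proof is incomplete. Your fix is to pass to the finite-index stabilizer $H=\{g\in G:\sm\circ\af_g=\sm\}$ and to assert that $\af|_H$ again has the weak tracial Rokhlin property with respect to $\sm$, with the towers now indexed by subsets of $H$. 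This is a nontrivial permanence property and you do not prove it; it does not follow from simply decomposing the given $G$-towers along right cosets of $H$. That decomposition does produce $H$-towers with constant level mass, but the F{\o}lner condition on $T_i$ only controls the boundary \emph{in aggregate} over all coset pieces: a single small piece $T_i\cap Hc_j$ can consist entirely of ``bad'' levels (those moved out of the tower by $g_0$) and can still carry essentially all of the $\sm$-mass, since nothing forces the masses $\sm(\af_{c_j}(e_i))$ to be comparable across different $j$. So to make your argument work one would need a genuine quasi-tiling (Ornstein--Weiss) argument to manufacture per-tower $H$-invariance, which is a substantial lemma, not a remark.

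The paper closes precisely this hole with a much cheaper device that is also available to you: since $g\mapsto\tau\circ\af_g$ permutes the finitely many extreme traces $\tau_1,\dots,\tau_k$, the sum $\sum_{j}\tau_j$ is a \emph{fully} $G$-invariant trace, so $\sum_j\tau_j(\af_h(e_i))$ is independent of $h\in T_i$; hence the bad (boundary) levels carry at most a fraction $|T_i\setminus gT_i|/|T_i|\leq\ep$ of the total mass of the tower, with no subgroup reduction needed. (The paper then runs a different endgame: it bounds $|\Phi(a\ld_g)|$ for every trace $\Phi$ on $A\rtimes_{\af_g}\Z$ by splitting $a\ld_g$ against the tower, killing the interior levels by traciality and orthogonality, and invokes Lemma 4.4 of Kishimoto to conclude non-weak-innerness, rather than arguing via $N_\om$.) As written, your proposal has a genuine gap at the restriction-to-$H$ step; replacing that step by the invariant-trace averaging $\sum_j\tau_j$ would let your central-sequence argument go through.
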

\begin{proof}
Let $1\neq g\in G$ be given, and let $\tau$ be an $\af_g$-invariant trace. Let $E\colon A\rtimes_{\af_g}\Z\rightarrow A$ be the conditional expectation determined by $E(a_n\ld_g^n)=a_0$, where $a_n\in A$ and $\ld_g$ is the canonical unitary in $A\rtimes_{\af_g}\Z$ implementing the action. We will show that, for any trace $\Phi\in \TS(A\rtimes_{\af_g}\Z)$, we have $\Phi(a\ld_g)=0$. If this is done, then the proof of Lemma 4.4 of \cite{Kishimoto1996} shows that $\af_g$ is not weakly inner. \\

For any $\tau\in \TS(A)$ and $x=(x_n)_{n\in \N}$, let $\tau_\om(x)=\lim_{n\rightarrow \om}\tau(x_n)$, which is a trace on $A_\om$. Let $\ep>0$ be arbitrary. Since $\af$ has the weak tracial Rokhlin property, by Proposition \ref{edefT}, we can find a $(\{g\},\ep)$-invariant subsets $T_1,\dots, T_n$ of $G$, and positive contractions $e_1,\dots, e_n$ in $A_{\infty}$, such that:
\begin{enumerate}[(1)]
\item $\af_g(e_i)\af_h(e_j)=0$, for $h\in T_i$ and $k\in T_j$ such that $h\neq k$ or $i\neq j$.
\item Let $e=\sum_{h\in T_i, 1\leq i\leq n}(\af_h(e_i))$, there is a representative $(e^{(n)})_{n\in\N}$ of $e$ such that
\beq
\lim_{n\rightarrow \om} \max_{\tau\in \TS(A)}d_\tau(1-e^{(n)})<\ep.
\eeq
\end{enumerate}
Now let $\tau_1,\tau_2,\dots,\tau_k$ be the extreme tracial states of $A$. Identify $a\ld_g$ with the constant sequence in $(A\rtimes_{\af_g}\Z)^{\infty}$, without loss generality assume $\|a\|=1$. We have
\begin{align}
|\Phi_{\om}(a\ld_g)|\leq& |\Phi_{\om}(\sum_{h\in T_i, i}\af_h(e_i)a\ld_g)|+|\Phi_\om((1-\sum_{h\in T_i,i}\af_h(e_i))a\ld_g)|\\
\leq& |\Phi_{\om}(\sum_{h\in T_i\cap gT_i, i}\af_h(e_i)a\ld_g)|+|\Phi_{\om}(\sum_{h\in T_i\backslash gT_i, i}\af_h(e_i)a\ld_g)|\\
&+\Phi_{\om}(1-\sum_{h\in T_i,i}\af_h(e_i))\|a\ld_g\|\\
\leq& |\Phi_{\om}(\sum_{h\in T_i\cap gT_i,i}\af_h(e_i^{1/2})a\ld_g\af_{g^{-1}h}(e_i^{1/2}))|+\\
&\sum_{h\in T_i\backslash gT_i,i}\Phi_{\om}(\af_h(e_i)))\|a\ld_g\|+\ep\\
\label{e1}\leq& 0+|\sum_{1\leq j\leq k}\sum_{h\in T_i\backslash gT_i,i} \tau_{j, \om}(\af_h(e_i))|+\ep\\ 
\label{e2}\leq&\sum_{1\leq j\leq k}\sum_{h\in T_i,i}\frac{|T_i\backslash gT_i|}{|T_i|}\tau_{j, \om}((\af_h(e_i)) +\ep\\ 
\label{e3} \leq& \ep\sum_{1\leq j\leq k}\tau_{j, \om}(\sum_{h\in T_i,i}\af_h(e_i))+\ep\leq (k+1)\ep.  
\end{align}
The estimation in the second to last line used the fact $\sum_{1\leq j\leq k}\tau_j((\af_h(a))$ is independent of $h$, since $\tau\rightarrow \tau\circ\af_h$ permutes the set of extreme tracial states. Since $\ep$ is arbitrary, this shows that $\Phi_{\om}(a\ld_g)=0$, and therefore $\Phi(a\ld_g)=0$.
\end{proof}

\begin{cor}\label{it}
Let $\af\in\Act_G(A)$ be an action with the weak tracial Rokhlin property. Then the canonical embedding $A\rightarrow A\rtimes_\af G$ induces a bijection between $\TS^{\af}(A)$ and $\TS(A\rtimes_\af G)$. 
\end{cor}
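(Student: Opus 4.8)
The plan is to exhibit a mutually inverse pair of maps and reduce everything to a single vanishing statement. Restriction along $\iota\colon A\hookrightarrow A\rtimes_\af G$ sends a tracial state $\Phi$ to $\tau:=\Phi\circ\iota$; since $\Phi$ is tracial, $\tau(\af_h(a))=\Phi(\ld_h a\ld_h^*)=\Phi(a)=\tau(a)$, so $\tau\in\TS^\af(A)$. In the other direction, let $E\colon A\rtimes_\af G\to A$ be the canonical faithful conditional expectation with $E(a\ld_g)=a$ if $g=1$ and $0$ otherwise. A direct check on the generators $a\ld_g,\,b\ld_h$ shows that $\tau\circ E$ is tracial precisely when $\tau$ is $\af$-invariant (the only nontrivial case $h=g^{-1}$ reduces to $\tau(a\af_g(b))=\tau(b\af_{g^{-1}}(a))$, which follows from traciality and $\af_g$-invariance of $\tau$). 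Thus $\tau\mapsto\tau\circ E$ maps $\TS^\af(A)$ into $\TS(A\rtimes_\af G)$, and restricting $\tau\circ E$ back to $A$ returns $\tau$. Consequently the whole statement reduces to showing that restriction is injective, equivalently that every trace $\Phi$ on $A\rtimes_\af G$ satisfies $\Phi=(\Phi\circ\iota)\circ E$; by linearity and continuity this is exactly the assertion that $\Phi(a\ld_g)=0$ for every $a\in A$ and every $g\neq 1$.

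To prove this vanishing, I would fix $g\neq 1$ and $a\in A$ with $\|a\|\le 1$, and write $\tau=\Phi\circ\iota\in\TS^\af(A)$. Form the trace $\Phi_\om$ on $(A\rtimes_\af G)^\om$ given by $\Phi_\om((x_n))=\lim_{n\to\om}\Phi(x_n)$; its restriction to $A^\om$ is $\tau_\om((x_n))=\lim_{n\to\om}\tau(x_n)$, and $\tau_\om\circ\af_h=\tau_\om$ for all $h\in G$ because $\tau$ is $\af$-invariant. Given $\ep_0,\ep_1>0$, choose (using that $A$ is simple and infinite dimensional) a nonzero positive $z\in A$ with $d_\tau(z)<\ep_1$, and apply the weak tracial Rokhlin property with $K=\{g\}$, tolerance $\ep_0$, and this $z$: we obtain $(\{g\},\ep_0)$-invariant sets $T_1,\dots,T_n$ and positive contractions $e_1,\dots,e_n\in A_\om$ with $\af_{h}(e_i)\af_{k}(e_j)=0$ for $h\in T_i,\,k\in T_j$ with $(h,i)\neq(k,j)$, and with $e=\sum_{h\in T_i,\,i}\af_h(e_i)$ satisfying $1-e\pwl z$. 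Since each $\af_h(e_i)$ lies in $A_\om$ and hence commutes with $a$, and since $\ld_g^*\,\af_h(x)\,\ld_g=\af_{g^{-1}h}(x)$ for $x\in A^\om$, all of the algebraic manipulations below take place inside $(A\rtimes_\af G)^\om$.

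I would then estimate $|\Phi_\om(a\ld_g)|=|\Phi(a\ld_g)|$ by inserting $1=e+(1-e)$ and splitting the $e$-part over $h\in T_i\cap gT_i$ and $h\in T_i\setminus gT_i$. For $h\in T_i\cap gT_i$ one has $g^{-1}h\in T_i$ with $g^{-1}h\neq h$, and the identity $\af_h(e_i)a\ld_g=\af_h(e_i^{1/2})\,a\,\ld_g\,\af_{g^{-1}h}(e_i^{1/2})$ together with traciality turns the summand into $\Phi_\om(\af_{g^{-1}h}(e_i^{1/2})\af_h(e_i^{1/2})\,a\ld_g)$, which vanishes by the orthogonality relation; so this block contributes $0$. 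For $h\in T_i\setminus gT_i$, Cauchy--Schwarz gives $|\Phi_\om(\af_h(e_i)a\ld_g)|\le\Phi_\om(\af_h(e_i))=\tau_\om(e_i)$, where the equality uses $\tau_\om\circ\af_h=\tau_\om$; summing and using $(\{g\},\ep_0)$-invariance (so $|T_i\setminus gT_i|\le\ep_0|T_i|$) bounds this block by $\ep_0\,\tau_\om(e)\le\ep_0$. Finally $|\Phi_\om((1-e)a\ld_g)|\le\|a\ld_g\|\,\Phi_\om(1-e)=\tau_\om(1-e)\le d_\tau(z)<\ep_1$, using $1-e\pwl z$ and $\tau(b)\le d_\tau(b)$ for contractions. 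Hence $|\Phi(a\ld_g)|\le\ep_0+\ep_1$, and letting $\ep_0,\ep_1\to0$ gives $\Phi(a\ld_g)=0$.

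The crucial point --- and the place where this argument improves on Proposition \ref{so} --- is that here $\tau=\Phi\circ\iota$ is invariant under the \emph{entire} group $G$, so $\tau_\om(\af_h(e_i))=\tau_\om(e_i)$ for all $h$ with no need to average over extreme traces; this is precisely why the hypothesis of finitely many extreme points can be dropped. The main technical obstacle I anticipate is the bookkeeping in the $T_i\cap gT_i$ block: one must verify carefully that $g^{-1}h$ again belongs to $T_i$ and is distinct from $h$ so that the orthogonality relation applies, and that the rewriting $\af_h(e_i)a\ld_g=\af_h(e_i^{1/2})a\ld_g\af_{g^{-1}h}(e_i^{1/2})$ is valid in $(A\rtimes_\af G)^\om$ (this rests on $\af_h(e_i^{1/2})$ commuting with $a$ and on $\af_h(e_i^{1/2})\ld_g=\ld_g\af_{g^{-1}h}(e_i^{1/2})$). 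A secondary point requiring care is the existence of a nonzero positive $z$ with arbitrarily small $d_\tau(z)$, which is where infinite dimensionality of $A$ enters.
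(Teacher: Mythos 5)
Your proof is correct and follows essentially the same route as the paper: build the inverse map via the conditional expectation, reduce to showing $\Phi(a\ld_g)=0$ for $g\neq 1$, and run the Rokhlin-tower estimate from Proposition \ref{so} with the splitting over $T_i\cap gT_i$ and $T_i\setminus gT_i$, noting that full $G$-invariance of $\Phi|_A$ is exactly what lets the finitely-many-extreme-traces hypothesis be dropped. The only difference is cosmetic: you work directly from Definition \ref{WTRP} with a small element $z$ rather than quoting the computation from Proposition \ref{so}, which if anything is slightly more self-contained.
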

\begin{proof}
Let $r$ be the map from $\TS(A\rtimes_\af G)$ to $\TS^{\af}(A)$ induced by the canonical embedding $A\rightarrow A\rtimes_\af G$. Let $s$ be the map from $\TS^{\af}(A)$ to $\TS(A\rtimes_\af G)$, defined by
\beq
s(\tau)(\sum a_g\ld_g)=\tau(a_1), \quad\quad \forall \tau\in \TS^{\af}(A).
\eeq
It's easy to check that $r\circ s$ is the identity map. To prove that $s\circ r$ is the identity map, it suffices to show that for any trace $\Phi$ in $\TS(A\rtimes_\af G)$, any $g\neq 1$, we have $\Phi(a\ld_g)=0$. We repeat the same argument as in Proposition \ref{so} except the last three inequalities (\ref{e1}), (\ref{e2}) and (\ref{e3}). Note that $\Phi$ is now a trace on $\TS(A\rtimes_\af G)$, not just a trace on $\TS(A\rtimes_{\af_g}\Z)$; and we dropped the assumption that $A$ has finitely many extremal tracial states. Let $\tau=r(\Phi)\in \TS^{\af}(A)$, adopt the same notations as in Proposition \ref{so}, we have the following estimation:
\begin{align*}
\sum_{h\in T_i\backslash gT_i,i}\Phi_{\om}(\af_h(e_i)))=&\sum_{h\in T_i\backslash gT_i,i}\tau_{\om}(\af_h(e_i))\\
=&\sum_{h\in T_i,i}\frac{|T_i\backslash gT_i|}{|T_i|}\tau_{\om}(\af_h(e_i)) +\ep\\
\leq& \ep\tau_{\om}(\sum_{h\in T_i,i}\af_h(e_i))\leq \ep.
\end{align*}
Hence $\Phi_\om(a\ld_g)<2\ep$. Since $\ep$ is arbitrary, we have $\Phi(a\ld_g)=0$.
\end{proof}

We can now re-establish a Rokhlin-type lemma for outer actions on the hyperfinite $\mathrm{II}_1$ factor $R$ (It was discussed in Chapter 6 of \cite{Ocneanu1985} for actions of more general von Neumann algebras. The formulation is slightly different: our projections are permuted by the action exactly, but do not sum up to exactly 1). Let $p_\om\colon R^{\om}\rightarrow R^{\om}/J_R$, where $J_R$ is the trace-kernel defined in Section \ref{preliminaries}.
\begin{prp}\label{rl}
Let $G$ be an countable, discrete and amenable group. Let $R$ be the hyperfinite $\mathrm{II}_1$ factor. Let $\af\colon G\cra R$ be any outer action. Then for any finite set $K\in G$ and $\ep, \ep_1>0$, there exist $(K, \ep)$-invariant sets $T_1, \dots, T_n$ in $G$ and projections $p_1, \dots p_n\in R^{\om}/J_R\cap p_\om(R)^{'}$ such that 
\begin{enumerate}[(1)]
\item $\af_g(p_i)\af_h(p_j)=0$, for $g\in T_i$ and $h\in T_j$ such that $g\neq h$ or $i\neq j$.
\item $\tau_\om(1-\sum_{g\in T_i, 1\leq i\leq n}\af_g(p_i))<\ep_1$.
\end{enumerate}
\end{prp}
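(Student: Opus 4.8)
The plan is to realise the desired projections inside the tracial ultrapower, where the von Neumann structure lets us pass from the merely approximate relations available in $A^{\om}$ to the exact orthogonality in condition (1) and the exact trace estimate in condition (2). Write $M=R^{\om}/J_R$ for the tracial ultrapower; by the remark following Proposition \ref{edef} this is a finite von Neumann algebra with faithful normal trace $\tau_\om$, and $p_\om\colon R^\om\to M$ is the quotient map. The automorphisms $\af_g$ act entrywise on $R^\om$, preserve $J_R$ since they preserve the trace, and hence descend to a trace-preserving action $\af_\om$ of $G$ on $M$. Because $\af_{g,\om}(p_\om(a))=p_\om(\af_g(a))$, the copy $p_\om(R)$ is globally invariant, so its relative commutant $N:=M\cap p_\om(R)'$ is an $\af_\om$-invariant von Neumann subalgebra; since $R$ has property $\Gamma$ (it is McDuff), $N$ is again a $\mathrm{II}_1$ factor carrying the faithful trace $\tau_\om$. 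All of the projections $p_i$ will be produced in $N$.

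The key structural input is that outerness of $\af$ on $R$ forces the induced action $\af_\om$ on the central sequence factor $N$ to be free: for every $g\neq 1$ the automorphism $\af_{g,\om}$ must be properly outer on $N$. This is exactly the point where one needs the von Neumann algebra theory of \cite{Connes1975} and \cite{Ocneanu1985}; it is the analogue, at the level of central sequences, of the statement that an outer automorphism of $R$ is not centrally trivial. I expect this freeness step to be the main obstacle, and the rest of the proof to be a reduction to the measure-theoretic Rokhlin lemma.

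Granting freeness, I would next fix a free measure-preserving action $G\cra (Y,\nu)$ (for instance a Bernoulli action, which exists since $G$ is countable) and produce a $G$-equivariant, trace-preserving normal embedding $L^\infty(Y,\nu)\hookrightarrow N$, so that $\tau_\om$ restricts to integration against $\nu$ and $\af_\om$ restricts to the given action on $(Y,\nu)$. The existence of such an equivariant embedding into the central sequence factor of $R$ is the von Neumann counterpart of the approximate equivariant central homomorphisms used in Theorem \ref{bshift}, and is where central freeness of $\af_\om$ is consumed. I would then apply Theorem 5 of \cite{Ornstein1987} to the free p.m.p. system $(Y,\nu,G)$ with the given data $K,\ep,\ep_1$: this yields $(K,\ep)$-invariant F{\o}lner sets $T_1,\dots,T_n$ and measurable tower bases $B_1,\dots,B_n\subseteq Y$ such that $gB_i$ and $hB_j$ are disjoint whenever $g\in T_i$, $h\in T_j$ and $(g,i)\neq(h,j)$, while $\nu\bigl(Y\setminus\bigcup_{g\in T_i,\,1\le i\le n}gB_i\bigr)<\ep_1$.

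Finally I would set $p_i=\chi_{B_i}\in L^\infty(Y,\nu)\subseteq N$. Since the embedding is equivariant, $\af_g(p_i)=\chi_{gB_i}$, so disjointness of the translated bases gives $\af_g(p_i)\af_h(p_j)=\chi_{gB_i\cap hB_j}=0$ for $(g,i)\neq(h,j)$, which is condition (1); and because $\tau_\om$ restricts to $\nu$ we obtain $\tau_\om\bigl(1-\sum_{g\in T_i,\,i}\af_g(p_i)\bigr)=\nu\bigl(Y\setminus\bigcup gB_i\bigr)<\ep_1$, which is condition (2). As the $p_i$ lie in $N=M\cap p_\om(R)'$ they automatically commute with $p_\om(R)$, completing the proof. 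The two steps carrying the real content are the freeness of $\af_\om$ on $N$ and the equivariant embedding realising a free p.m.p. system inside $N$; both are von Neumann-algebraic, and one could alternatively bypass the embedding by following Ocneanu's direct tower construction from quasi-tilings, but either way this is the reason the statement is confined to the hyperfinite factor, where property $\Gamma$ and the structure theory of \cite{Ocneanu1985} are available.
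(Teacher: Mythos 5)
Your overall architecture (pass to the von Neumann ultrapower, invoke the Ornstein--Weiss Rokhlin lemma for a free p.m.p.\ system, take characteristic functions of the tower bases) is reasonable, but the step you label as the second piece of ``real content'' --- producing a $G$-equivariant, trace-preserving normal embedding $L^\infty(Y,\nu)\hookrightarrow N=R^{\om}/J_R\cap p_\om(R)'$ for an \emph{arbitrary} outer action $\af$ --- is a genuine gap, not a routine consequence of what precedes it. Proper outerness of the induced action on $N$ does not, by any soft argument, yield an equivariant copy of a prescribed free p.m.p.\ $G$-system inside $N$; extracting such structure from central freeness is precisely the content of Ocneanu's noncommutative Rokhlin theorem, i.e.\ it is essentially the statement you are trying to prove. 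As written, the proof either silently imports that theorem wholesale or is circular. (Your first ``hard step'' also needs more care than you give it: Connes' theorem $\mathrm{Ct}(R)=\mathrm{Int}(R)$ gives that $\af_{g,\om}$ acts \emph{nontrivially} on the central sequence algebra, and you then need factoriality of $R_\om$ and a further argument to upgrade this to proper outerness; but this part is at least true and correctly attributed to \cite{Connes1975} and \cite{Ocneanu1985}.)

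The fix is a reduction you did not make, and it is exactly how the paper proceeds: by Theorem 1.4 of \cite{Ocneanu1985}, any two outer actions of $G$ on $R$ are cocycle conjugate, and since the cocycle unitaries lie in $R$ they act trivially on the relative commutant $N$, so conditions (1) and (2) are preserved under cocycle conjugacy and it suffices to verify the proposition for a single model action. The paper takes the Bernoulli shift on $\otimes_G\js\cong\js$, whose weak closure in the trace representation is $R$; this action already has the weak tracial Rokhlin property at the C*-level (Corollary \ref{bfonz}, which is where Ornstein--Weiss enters, via Theorem \ref{bshift}) and is outer on $R$ by Proposition \ref{so}. The remaining work is then purely technical: lift the mutually orthogonal positive contractions $e_i\in\js_\om$, pass to $R^{\om}/J_R$, and replace each $p_\om(e_i)$ by its support projection, using strong continuity of multiplication on bounded sets and the identity $d_\tau(1-f)=\tau(q)$ for the support projection $q$ of $1-f$ to get the exact orthogonality and the trace bound. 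If you insert the cocycle-conjugacy reduction at the start, your embedding of $L^\infty\bigl(\prod_G[0,1]\bigr)$ becomes explicit for the model action and your argument closes; without it, the embedding step is unsupported.
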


\begin{proof}
Any two outer actions on the hyperfinite $\mathrm{II}_1$ are cocycle conjugate (Theorem 1.4 of \cite{Ocneanu1985}), hence we need only to check one of them. \\
Let $\js$ be the Jiang-Su algebra. Let $\af$ be the Bernoulli shift on $\otimes_G \js\cong \js$. Let $\tau$ be the unique tracial state on $\js$. Then $\pi_\tau(\js)^{''}$ is the hyperfinite $\mathrm{II}_1$ factor $R$. The induced action on $R$, which we still call it $\af$, is outer by Corollary \ref{bfonz} and Proposition \ref{so}.\\
Let $K\in G$ be any finite set. Let $\ep, \ep_1>0$. Since $\af$ has the weak tracial Rokhlin property, there exist $(K, \ep)$-invariant sets $T_1, \dots, T_n$ in $G$ and positive contractions $e_1, \dots e_n\in \js_{\om}$ such that 
\begin{enumerate}[(1)]
\item $\af_g(e_i)\af_h(e_j)=0$, for $g\in T_i$ and $h\in T_j$ such that $g\neq h$ or $i\neq j$.
\item Let $e=\sum_{g\in T_i, 1\leq i\leq n}(\af_g(e_i))$, there is a representative $(e^{(n)})_{n\in\N}$ of $e$ such that
\beq
\lim_{n\rightarrow \om} \{d_\tau(1-e^{(n)})\}<\ep_1.
\eeq
\end{enumerate}
Using semiprojectivity of direct sums of $C_0((0,1])$, we can lift $\{\af_g(e_i)\}_{g\in T_i, i}$ to mutually orthogonal positive contractions $\{e_{g,i}=(e_{g,i}^{(n)})_{g\in T_i, i}\}$. Let $\tilde{e}^{(n)}=\sum_{g\in T_i, i}(e_{g,i}^{(n)})$, set $\dt_n=\|\tilde{e}^{(n)}-e^{(n)}\|$. Then $lim_{n\rightarrow \om} \dt_n=0$. Let 
\beq 
h_{\dt_n}(x)=1-\frac{1}{1-\dt_n}(1-x-\dt_n)_+, \forall x\in [0,1].
\eeq
Note that $h_{\dt_n}\in C_0((0,1])$ tends to the identity function as $n\rightarrow \om$. Let $f_{g,i}^{(n)}=h_{\dt_n}(e_{g,i}^{(n)})$, set $f_{g,i}=(f_{g,i}^{(n)})_{n\in \N}\in \ell^{\infty}(\N, A)$. Then $f_{g,i}$ is a representative of $\af_g(e_i)$.  Let$ f^{(n)}=\sum_{g\in T_i, i}(f_{g,i}^{(n)})$. We have:
\begin{align*}
1-f^{(n)}=1-h_{\dt_n}(\tilde{e}^{(n)})\approx (1-\tilde{e}^{(n)}-\dt_n)_+\precsim 1-e^{(n)}
\end{align*}

The algebra $R^{\om}/J_R$ is again a von Neumann algebra. For each $i$, let $p_i$ be the support projection of $p_\om(e_i)\in R^{\om}/J_R$. Since multiplication is strongly continuous on bounded sets and $\js$ is strongly dense in $R$, we have $p_i\in R^{\om}/J_R\cap p_\om(R)^{'}$ and $\af_g(p_i)\af_h(p_j)=0$, for $g\in T_i$ and $h\in T_j$ such that $g\neq h$ or $i\neq j$.\\
Let $\tilde{p}_{g,i}^{(n)}$ be the support projection of $f_{g,i}^{(n)}$ and set $\tilde{p}_{g,i}=(\tilde{p}_{g,i}^{(n)})_{n\in \N}$. Since $p_{\om}$ is strongly continuous, we see $\tilde{p}_{g,i}$ is a lift of $\af_g(p_i)$. Let $p^{(n)}=\sum_{g\in T_i, i}(p_{g,i}^{(n)})$. Since $f^{(n)}p^{(n)}=f^{(n)}$, easy calculation shows that $(1-f^{(n)})(1-p^{(n)})=1-p^{(n)}$. If we let $q^{(n)}$ be the support projection of $1-f^{(n)}$, then $1-p^{(n)}\leq q^{(n)}$. Using the fact that $d_\tau(1-f^{(n)})=\tau(q^{(n)})$, we have
\beq
\lim_{n\rightarrow \om} \tau(1-p^{(n)})\leq \lim_{n\rightarrow \om} d_\tau(1-f^{(n)})<\ep_1.
\eeq
\end{proof}

%We say a trace $\tau$ to be \emph{uniformly amenable} (Definition 3.2.1 of \cite{Brown2006}) if there exists a sequence of u.c.p. maps $\phi_n\colon A\rightarrow M_{k(n)}(\C)$ %such that $\|\phi_n(ab)-\phi_n(a)\phi_n(b)\|_2\rightarrow 0,$ for all $a, b\in A$ and
%\beq
%\|\tau-\mathrm{tr}_{k(n)}\circ\phi_n\|_{A^*}\rightarrow 0
%\eeq
%where $\|\cdot\|_{A^*}$ is the natural norm on the dual Banach space of $A$. By Theorem 3.2.2 (5) of \cite{Brown2006}, a trace is uniformly amenable if and only if $\phi_\tau(A%)^{''}$ is hyperfinite. It is also shown that every trace on a nuclear C*-algebra or a C*-algebra of tracial rank zero is uniformly amenable (See Theorem 4.2.1 and Proposition %4.5.5 of \cite{Brown2006}). 

\begin{thm}\label{so=trp}
Let $A$ be a unital, simple, separable, infinite dimensional C*-algebra with finitely many extremal tracial states. Suppose $A$ is either nuclear or has tracial rank zero. Let $G$ be a countable discrete amenable group. For a cocycle action $(\af, u)$ of $G$ on $A$, it is strongly outer if and only if it has the weak tracial Rokhlin property.
\end{thm}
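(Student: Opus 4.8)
The plan is to prove the two implications separately. The implication that the weak tracial Rokhlin property forces strong outerness is already contained in Proposition \ref{so}, whose hypotheses (a unital simple infinite dimensional $A$ with $\TS(A)$ having finitely many extreme points) are exactly what we have here, so for that direction I would simply cite it. The content is the converse: assuming $(\af,u)$ is strongly outer, I must produce the Rokhlin data of Proposition \ref{edefT} or \ref{edef}. First I would pass to the von Neumann algebra level. Writing $\tau_1,\dots,\tau_k$ for the extreme tracial states and $M_j=\pi_{\tau_j}(A)''$, simplicity and infinite dimensionality of $A$ make each $M_j$ a $\mathrm{II}_1$ factor, and either nuclearity (via injectivity and Connes' theorem) or tracial rank zero (via the tracially AF structure) identifies each $M_j$ with the hyperfinite factor $R$. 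Strong outerness says precisely that for $g\neq 1$ fixing $\tau_j$ the weak extension $\af_g$ on $M_j$ is outer.

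Next I would run the von Neumann Rokhlin lemma. The group $G$ permutes $\{\tau_1,\dots,\tau_k\}$, so $\af$ induces a (cocycle) action on the finite hyperfinite algebra $N=\bigoplus_{j=1}^k M_j$ that permutes the summands according to this permutation and acts within each summand it fixes. Strong outerness translates into this action being free in Ocneanu's sense: on a summand fixed by $g$ the automorphism $\af_g$ is outer, while summands genuinely moved by $g$ contribute no fixed behaviour. This is the setting of the Rokhlin lemma for free actions on finite injective von Neumann algebras (Chapter 6 of \cite{Ocneanu1985}), of which Proposition \ref{rl} is the factor case. I would thus obtain, for any finite $K$ and $\ep,\ep_1>0$, $(K,\ep)$-invariant sets $T_1,\dots,T_n$ and projections $p_i$ in the tracial central sequence algebra of $N$ with $\af_g(p_i)\af_h(p_j)=0$ for $(g,i)\neq(h,j)$ and $\tau_\om(1-\sum_{g\in T_i,i}\af_g(p_i))<\ep_1$, uniformly over the finitely many $\tau_j$.

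Finally I would transport this data back to $A_\om$. The natural map from the norm central sequence algebra $A_\om$ to the tracial central sequence algebra of $N$ is surjective when $A$ is nuclear, or of tracial rank zero, with finitely many extreme traces (a Kirchberg--Sato type lifting), so I can choose positive contractions $\tilde{e}_i\in A_\om$ lifting the $p_i$; their images under $\af_g$ are then orthogonal in every $\|\cdot\|_{2,\tau}$ and the complement $1-\sum\af_g(\tilde{e}_i)$ has uniformly small dimension along the $\tau_j$. It remains to convert these seminorm estimates into the honest requirements of the weak tracial Rokhlin property, namely exact (norm) orthogonality of the $\af_g(e_i)$ and the Cuntz subequivalence $1-e\precsim z$. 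When $A$ has tracial rank zero this follows from strict comparison (available as in the proof of Theorem \ref{WTT}) via Proposition \ref{edefT}; in the nuclear case, where strict comparison may fail, I would instead use a property-$(SI)$/excision argument to turn uniform trace-smallness of the overlaps and of $1-e$ into the Cuntz comparison of Proposition \ref{edef}. This conversion is the heart of the matter and the main obstacle: the von Neumann Rokhlin lemma only controls the $\|\cdot\|_{2,\tau}$ seminorms, whereas the weak tracial Rokhlin property is a norm- and Cuntz-level statement, so the whole weight of the argument rests on the surjectivity of the central-sequence map together with the comparison step that upgrades trace-smallness to the relations demanded in Definition \ref{WTRP}.
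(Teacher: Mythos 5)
Your proposal follows essentially the same route as the paper: one direction is Proposition \ref{so}, and for the converse the paper likewise identifies each $\pi_{\tau_j}(A)''$ with the hyperfinite $\mathrm{II}_1$ factor (via uniform amenability of traces from nuclearity or tracial rank zero), invokes the von Neumann--level Rokhlin lemma of Proposition \ref{rl} in place of Matui--Sato's property (Q), and then transfers the data back to $A_\om$ by the central-sequence and property-(SI) machinery of Theorem 3.7 of \cite{Matui2014}. The paper merely compresses the lifting and comparison steps you spell out into the phrase ``modify the estimations accordingly,'' so your outline is a faithful (and more explicit) rendering of the intended argument.
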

\begin{proof}
If $A$ is either nuclear or has tracial rank zero, then every trace $\tau$ is uniformly amenable (See Definition 3.2.1, Theorem 4.2.1 and Proposition 4.5  of \cite{Brown2006}). By Theorem 3.2.2 of \cite{Brown2006}, The von Neumann algebra $\phi_\tau(A)^{''}$ is hyperfinite. Since $A$ is unital, simple, infinite dimensional, $\phi_\tau(A)^{''}$ is the hyperfinite $\mathrm{II}_1$ factor. Now we can see that the proof of Theorem 3.7 of \cite{Matui2014} could be generalized to actions of general discrete amenable groups. The only change we need to make is to replace property (Q) by the property in Proposition \ref{rl} and modify the estimations accordingly. 
\end{proof}

Another type of example comes from the product-type actions. We begin with the definition:
\begin{dfn}\label{ptact}
Let $A=\otimes_{i=1}^{\infty}\mathrm{B}(H_i), $ where $H_i$ is a finite dimensional Hilbert space for each $i$. An action $\af\in\Act_G(A)$ is called a {\emph{product-type action}} \ifo\ for each $i$, there exists a unitary representation $\pi_i\colon G\rightarrow \mathrm{B}(H_i)$, which induces an inner action $\af_i\colon g\mapsto \Ad(\pi_i(g))$, such that $\af=\otimes_{i=1}^{\infty}\af_i$.
\end{dfn}

\begin{dfn}
Let $\af\in \Act_G(A)$ be a product-type action on a UHF-algebra $A.$ A \emph{telescope} of the action is a choice of an infinite sequence of positive integers $1=n_1<n_2<\cdots$ and a re-expression of the action, so that $A=\otimes_{i=1}^{\infty} B(T_i)$ where $T_i=\otimes_{j=n_i}^{n_{i+1}-1}H_j$, and the action on $B(T_i)$ is $\otimes_{j=n_i}^{n_{i+1}-1}\af_j$.
\end{dfn}

\begin{thm}\label{main}
Let $\af \in \Act_G(A)$ be a product-type action where $G$ is countable, discrete and amenable. Let $H_i$, $\pi_i$ and $\af_i$ be defined as in Definition \ref{ptact}. Let $d_i$ be the dimension of $H_i$ and $\chi_i$ be the character of $\pi_i$. We will use the same notations if we do a telescope to the action. Define $\chi\colon G\mapsto \C$ to be the characteristic function on $1_{G}$. Then the action $\af$ has the tracial Rokhlin property \ifo\ there exists a telescope, such that for any $n\in \N$, the infinite product
\begin{equation}
{\displaystyle{\prod_{n\leq i <\infty}\frac{1}{d_i}\chi_i}}=\chi.
\end{equation}
\end{thm}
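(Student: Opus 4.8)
The plan is to reduce the statement to a computation about outerness of product-type automorphisms of the hyperfinite $\mathrm{II}_1$ factor, using the machinery already in place. Since $A=\otimes_i B(H_i)$ is a UHF algebra, it is unital, simple, separable, infinite dimensional, nuclear, has tracial rank zero, and carries a unique tracial state $\tau$. Hence by Theorem \ref{WTT} (together with the trivial reverse implication) the tracial Rokhlin property and the weak tracial Rokhlin property coincide for $\af$, and by Theorem \ref{so=trp} both are equivalent to $\af$ being strongly outer. Because $\tau$ is the unique trace it is fixed by every $\af_g$, so $\TS^{\af_g}(A)=\{\tau\}$ and $\pi_\tau(A)''=R$ is the hyperfinite $\mathrm{II}_1$ factor; thus strong outerness of $\af$ is exactly the assertion that the weak extension of each $\af_g$ with $g\neq 1$ is an outer automorphism of $R$. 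So it suffices to prove that $\af_g$ is outer on $R$ for every $g\neq 1$ if and only if there is a telescope for which $\prod_{n\leq i}\frac{1}{d_i}\chi_i=\chi$ for all $n$.

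Write $\phi_i(g)=\frac{1}{d_i}\chi_i(g)=\tau_i(\pi_i(g))$ for the normalized character, so $|\phi_i(g)|\leq 1$, and set $u_N=\pi_1(g)\otimes\cdots\otimes\pi_N(g)\in M_N:=\otimes_{i\leq N}B(H_i)$, which implements $\af_g$ on $M_N$. The heart of the argument is the criterion: $\af_g$ is inner on $R$ if and only if some tail product $\prod_{i\geq n}|\phi_i(g)|$ is nonzero, equivalently some tail of $\sum_i(1-|\phi_i(g)|)$ converges. For the ``if'' direction I would replace each $\pi_i(g)$ by a phase-adjusted unitary $\tilde u_i$ with $\tau_i(\tilde u_i)=|\phi_i(g)|\geq 0$; using independence of the tensor factors one computes $\|\prod_{M<i\leq N}\tilde u_i-1\|_{2}^{2}=2-2\prod_{M<i\leq N}|\phi_i(g)|$, so the partial products are Cauchy in $\|\cdot\|_2$ once a tail converges, and their strong limit is a unitary of $R$ implementing the tail of $\af_g$, while the finitely many remaining factors are implemented by a unitary in $M_{n-1}$. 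For the converse, suppose $\af_g=\Ad u$ with $u$ unitary in $R$. Then $w_N:=u^*u_N$ lies in $M_N'\cap R$, and writing $E_N$ for the trace-preserving conditional expectation onto $M_N$ one has $E_N(w_N)=\tau(w_N)\,1$, while $|\tau(w_N)|\leq \|u-E_K(u)\|_2+\prod_{K<i\leq N}|\phi_i(g)|$ for every fixed $K$; when all tail products vanish this forces $\tau(w_N)\to 0$. Expanding $u_N=u w_N$ through $E_N$ then gives $1=\|u_N\|_2\leq |\tau(w_N)|+\|u-E_N(u)\|_2$, and since both terms on the right tend to $0$ this is a contradiction, so all tails vanishing forces $\af_g$ outer.

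It remains to match this intrinsic tail condition with the telescoped product statement. For $g\neq 1$ the complex infinite product $\prod_{i\geq n}\phi_i(g)$ equals $0$ precisely when $\prod_{i\geq n}|\phi_i(g)|=0$, while for $g=1$ one has $\phi_i(1)=1$, so the product is always $1=\chi(1)$; thus ``$\af_g$ outer for all $g\neq 1$'' is equivalent to ``$\prod_{i\geq n}\frac{1}{d_i}\chi_i=\chi$ for all $n$'' for the original factorization. Since a telescope only groups finitely many consecutive factors, $\frac{1}{\tilde d_k}\tilde\chi_k(g)=\prod_{n_k\leq i<n_{k+1}}\phi_i(g)$, and a telescoped tail product vanishes exactly when the corresponding original tail product does; consequently the existence of a telescope with $\prod_{n\leq i}\frac{1}{d_i}\chi_i=\chi$ for all $n$ is equivalent to the intrinsic tail condition, already witnessed by the trivial telescope. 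Chaining the three equivalences proves the theorem.

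I expect the main obstacle to be the operator-algebraic computation of the second paragraph, particularly the converse direction: one must carry out the $\|\cdot\|_2$-estimates in $R$, correctly identify $M_N'\cap R$ with the tail factor, and handle the conditional expectation $E_N$ precisely. A subsidiary but essential subtlety is that the criterion involves every tail rather than the single full product: the example where $\pi_1(g)$ is nontrivial but all later $\pi_i(g)$ are scalar shows that $\prod_{i\geq 1}\phi_i(g)=0$ can coexist with $\af_g$ inner, which is exactly why the quantifier ``for any $n$'' must appear in the statement.
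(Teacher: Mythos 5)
Your proof is correct, but after the shared first step it takes a genuinely different route from the paper's. Both arguments begin identically: a UHF algebra is nuclear, monotracial and of tracial rank zero, so by Theorem \ref{so=trp} (with Theorem \ref{WTT}) the tracial Rokhlin property for $\af$ is equivalent to strong outerness, i.e.\ to outerness of the weak extension of each $\af_g$, $g\neq 1$, on the hyperfinite $\mathrm{II}_1$ factor $\pi_\tau(A)''$. From there the paper reduces to cyclic subgroups (both strong outerness and the character identity are pointwise conditions in $g$), quotes \cite{Wang2013} for finite cyclic groups, and for $G=\Z$ quotes Kishimoto's results that the tracial Rokhlin property coincides with the Rokhlin property for product-type actions on UHF algebras and is characterized by uniform distribution of the eigenvalue sequences, finishing with Weyl's criterion to convert uniform distribution into the condition on $\prod_i\frac{1}{d_i}\chi_i$. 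You instead prove directly, for each fixed $g\neq 1$, the classical criterion that $\otimes_i\Ad\pi_i(g)$ is inner on $R$ if and only if some tail product $\prod_{i\geq n}\left|\frac{1}{d_i}\chi_i(g)\right|$ is nonzero; your two $\|\cdot\|_2$-estimates (the Cauchy computation $\|\prod\tilde u_i-1\|_2^2=2-2\prod|\phi_i(g)|$ for the phase-corrected partial products in one direction, and the conditional-expectation bound $1=\|u_N\|_2\leq|\tau(w_N)|+\|u-E_N(u)\|_2$ with $w_N=u^*u_N\in M_N'\cap R$ in the other) are the standard ones and check out, as does your observation that the tail condition is insensitive to telescoping because the partial products of the moduli are monotone non-increasing, so the trivial telescope already witnesses the condition. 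Your route buys a self-contained, uniform treatment with no case split between finite and infinite cyclic groups and no reliance on \cite{Wang2013} or \cite{Kishimoto1995}; the cost is that you must actually carry out the von Neumann algebra computation that the paper outsources to those references. Your closing remark on why the quantifier ``for any $n$'' is indispensable (e.g.\ $\pi_1(g)$ nontrivial but $\pi_i(g)$ scalar for $i\geq 2$) is a worthwhile sanity check that the paper leaves implicit.
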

\begin{proof}
  Any UHF algebra has tracial rank zero and monotracial. By Theorem \ref{so=trp}, that $\af$ has the tracial Rokhlin property is equivalent to that $\af$ is strongly outer. In this case, $\af$ has the tracial Rokhlin property if and only if $\af\vert_H$ has tracial Rokhlin property for any cyclic subgroup $H\subset G$.\\
Let $\chi_{H,i}$ be the restriction of $\chi_i$ to the subgroup $H$, which is exactly the character of the restricted action $\pi_i\vert_H$. We observe that $\prod_{n\leq i <\infty}\frac{1}{d_i}\chi_i=\chi$ if and only if 
\begin{equation}
{\displaystyle{\prod_{n\leq i <\infty}\frac{1}{d_i}\chi_{H,i}}}=\chi, \forall\,\, \text{cyclic subgroup}\,\, H\subset G.
\end{equation}
Hence the theorem will be proved if we can show that it is true for any cyclic group $G$. If $G$ is finite, then it is proved in \cite{Wang2013}. If $G$ is infinite, Let $x$ be a generator and $U_i$ be the unitary in $B(H_i)$ such that $\pi_i(x)=\Ad U_i$. Let $S_{k,l}$ be a sequence consisting of eigenvalues of $\otimes_{i=k}^lU_i$, repeated as often as multiplicity indicates. Kishimoto has shown that in case of infinite cyclic group acting on UHF algebra, the tracial Rokhlin property coincide with the Rokhlin property (Theorem 1.3 of \cite{Kishimoto1995}). He also show in Lemma 5.2 of \cite{Kishimoto1995} that the product-type action $\af$ has the Rokhlin property if and only $\{S_{k,l}\}_{l=k}^{\infty}$ is uniformly distributed, for any $k\in \N$.
Now fix some $k\in \N$. For any sequence $S=(\lambda_1,\lambda_2\dots,\lambda_n)$ in $\T$, We let $\mu_S$ be the measure on $\T$ such that $\mu_S=\frac{1}{n}\sum_{i}\dt_{\lambda_i}$, where $\dt_{\lambda_i}$ is the Dirac measure concentrated at the point $\lambda_i\in \T$. By definition,  $\{S_{k,l}\}_{l=k}^{\infty}$ is uniformly distributed if and only if
\beq
\lim_{l\rightarrow \infty} \mu_{S_{k,l}}(f)=\int_{\T} f\,d\mu, \forall f\in C(\T),
\eeq
where $\mu$ is the normalized Haar measure.
Now it's not hard to see that
\beq
{\displaystyle{\prod_{k\leq i <l}\frac{1}{d_i}\chi_i(n)}}=\mu_(S_{k,l})(z^n), \forall n\in \Z,
\eeq
where $z^n\in C(\T)$ stands for the function $z\rightarrow z^n$.\\
Hence
\begin{equation}
{\displaystyle{\prod_{k\leq i <\infty}\frac{1}{d_i}\chi_i}}=\chi.
\end{equation}
is equivalent to 
\beq
\lim_{l\rightarrow \infty} \mu_{S_{k,l}}(z^n)=\dt(n,0)=\int_{\T} z^n\,d\mu, \forall n\in \Z
\eeq
And therefore further equivalent to that  $\{S_{k,l}\}_{l=k}^{\infty}$ being uniformly distributed, since any continuous function in $C(\T)$ can be uniformly approximated by finite linear combinations of the functions $z^n$.
\end{proof}

Another example comes from actions on non-commutative tori. Let $\theta$ be a non-degenerate anti-symmetric bicharacter on $\Z^d$. We identify it with its matrix under the canonical basis of $\Z^d$. Then the associated non-commutative tori $A_{\theta}$ is simple, unital $\textbf{A}\T$ algebra with a unique trace. $A_{\theta}$ is generated by unitaries $\{U_x\,\vert\,x\in \Z^d\}$ subject to the relation
%Beginning of section 4
%(M. A. Rieffel,
%Projective modules over higher-dimensional non-commutat
%ive tori
%, Canadian
%J. Math.
%40
%(1988), 257–338.):

\beq
U_yU_x=exp({\pi i<x,\theta y>})U_{x+y}, \forall x,y\in \Z^d.
\eeq

For any $T\in \M_d(\Z)$, the map $U_x\rightarrow U_{Tx}$ give rises to an endomorphism $\af_T$ of $A_{\theta}$ if and only if $\frac{1}{2}(T^{t}\theta T-\theta)\in \M_d(\Z)$ (This relation is automatically satisfied for $d=2$). It is an automorphism if and only if $T$ is invertible. Let
\beq
G_{\theta}=\{T\in GL_n(\Z)\,\vert\, \frac{1}{2}(T^{t}\theta T-\theta)\in \M_d(\Z)\}
\eeq

\begin{prp}
Let $\theta$ be a non-degenerate anti-symmetric bicharacter on $\Z^d$. Let $G$ be any amenable subgroup of $G_{\theta}$. Then the action $\af\in \Act_G(A_{\theta})$, defined by $T\rightarrow \af_T$, is strongly outer, and hence has the tracial Rokhlin property. 
\end{prp}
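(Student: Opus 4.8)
The plan is to derive the result from the two structural theorems already established, reducing everything to a single outerness computation. First I would record the standing facts about the noncommutative torus: for non-degenerate $\theta$ the algebra $A_{\theta}$ is simple, unital, separable, infinite-dimensional and nuclear, and it carries a \emph{unique} tracial state $\tau$. Uniqueness forces $\tau\circ\af_T=\tau$ for every $T\in G_{\theta}$, so that $\tau\in\TS^{\af_T}(A_{\theta})$ for all $T$, and the GNS completion $\pi_\tau(A_{\theta})''$ is the hyperfinite $\mathrm{II}_1$ factor $R$. Consequently, verifying strong outerness amounts to showing that for each $T\in G$ with $T\neq I$ the normal extension $\bar\af_T$ of $\af_T$ to $R$ (determined by $U_x\mapsto U_{Tx}$) is not weakly inner, i.e. is outer as an automorphism of $R$. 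Once strong outerness is in hand, Theorem \ref{so=trp} (applicable since $A_{\theta}$ is nuclear with a single extreme trace) yields the weak tracial Rokhlin property, and Theorem \ref{WTT} then upgrades this to the tracial Rokhlin property, using that $A_{\theta}$ has tracial rank zero.

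The heart of the argument is the outerness computation, which I would carry out by Fourier analysis in $R$. The unitaries $\{U_x\}_{x\in\Z^d}$ satisfy $\tau(U_x)=\delta_{x,0}$, hence form an orthonormal basis of $L^2(R,\tau)$. Suppose, for contradiction, that $\bar\af_T=\Ad V$ for a unitary $V\in R$, and expand $V=\sum_{x}\hat V_x U_x$ with $\sum_x|\hat V_x|^2=\|V\|_{2,\tau}^2=1$. The relation $\bar\af_T(U_y)=VU_yV^*$ reads $VU_y=U_{Ty}V$; expanding both sides via $U_aU_b=\exp(\pi i\langle b,\theta a\rangle)U_{a+b}$ and comparing the coefficient of each $U_w$ gives, after taking absolute values, the identity $|\hat V_{z}|=|\hat V_{z-(T-I)y}|$ valid for all $y,z\in\Z^d$.

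To finish, I would exploit that $T\neq I$ makes the homomorphism $T-I\colon\Z^d\to\Z^d$ nonzero, so its image contains some $v=(T-I)y_0\neq 0$. The displayed identity then shows that $|\hat V_z|$ is constant along the infinite progression $\{z-nv:n\in\Z\}$; were any single $\hat V_z$ nonzero, the resulting infinitely many equal-magnitude coefficients would force $\sum_x|\hat V_x|^2$ to diverge, contradicting $\|V\|_{2,\tau}=1$. Hence $V=0$, which is absurd for a unitary, so $\bar\af_T$ is outer for every $T\neq I$, and the action is strongly outer.

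I expect the one genuinely delicate input to be the passage from the weak to the full tracial Rokhlin property, namely verifying the hypothesis of Theorem \ref{WTT}: one must invoke the structure theory of noncommutative tori to know that non-degeneracy of $\theta$ makes $A_{\theta}$ a simple unital AT algebra of real rank zero, whence (by Lin's characterization) it has tracial rank zero, rather than merely being simple. The Fourier computation itself is robust — only the magnitudes $|\hat V_x|$ and the nonvanishing of $T-I$ enter, so the noncommutativity phases play no role — and it applies uniformly to every nontrivial $T$, independently of the internal structure of the amenable group $G$ (whose amenability is needed only to invoke the Rokhlin-type theorems, not the outerness estimate).
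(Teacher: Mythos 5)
Your proposal is correct, and its overall skeleton (strong outerness $\Rightarrow$ weak tracial Rokhlin property via Theorem \ref{so=trp} $\Rightarrow$ tracial Rokhlin property via Theorem \ref{WTT}, using tracial rank zero of $A_\theta$) is exactly the route the paper takes implicitly. Where you genuinely diverge is the key outerness step: the paper simply cites Lemma 5.10 of the Echterhoff--L\"uck--Phillips--Walters reference to conclude that $\af_T$ is not weakly inner for $T\neq I$, whereas you prove this from scratch by a Fourier-coefficient argument in $L^2(R,\tau)$. Your computation checks out: from $VU_y=U_{Ty}V$ and the commutation relation one gets $|\hat V_{w-y}|=|\hat V_{w-Ty}|$, i.e.\ $|\hat V_z|=|\hat V_{z-(T-I)y}|$ for all $y,z$, and since $T\neq I$ gives a nonzero $v=(T-I)y_0$, square-summability kills every coefficient --- the phases indeed never enter. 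What your approach buys is self-containment and transparency (it is the standard von Neumann--level outerness argument for noncommutative tori, and it makes clear that only $T\neq I$ and uniqueness of the trace are used); what the citation buys the paper is brevity. You also make explicit two inputs the paper leaves tacit: that uniqueness of $\tau$ collapses $\TS^{\af_T}(A_\theta)$ to a single point so that one outerness check per $T$ suffices, and that the ``hence'' in the statement requires knowing $A_\theta$ is a simple AT algebra of real rank zero, hence of tracial rank zero, before Theorem \ref{WTT} applies. Both are correct and worth recording.
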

\begin{proof}
Let $\tau$ be the unique trace state on $A_{\theta}$. By Lemma 5.10 of \cite{Echterhoff2010}, for each $T\in G\backslash \{e\}$, the automorphism $\af_T$ is not weakly inner. Hence $\af$ is strongly outer.
\end{proof}

If we can find one example of actions with (weak) tracial Rokhlin property, we can actually find lots of them by forming inner tensors. More specifically, we have the following:
\begin{prp}\label{tena}
Let $\af\in \Act_G(A)$ be an action with the weak tracial Rokhlin property and $\bt\in \Act_G(B)$ be arbitrary, where $A,B$ are both simple and unital. Then the inner tensor of these two actions $\gm=\af\otimes \bt \in \Act_G(A\otimes_{\min}B)$ has the weak tracial Rokhlin property. If $\af$ has the tracial Rokhlin property, then $\gm$ has the tracial Rokhlin property.
\end{prp}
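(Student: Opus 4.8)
The plan is to transport the Rokhlin elements witnessing the weak tracial Rokhlin property of $\af$ into the central sequence algebra of $A\otimes_{\min}B$ by tensoring with $1_B$. Fix a finite set $K\subseteq G$, an $\ep_0>0$, and a nonzero $z'\in(A\otimes_{\min}B)_+$; I will produce the data required by Definition \ref{WTRP} for $\gm$. First, the unital embedding $a\mapsto a\otimes1_B$ of $A$ into $A\otimes_{\min}B$ induces a map $A_\om\to(A\otimes_{\min}B)^\om$, and since $[e_i^{(n)}\otimes1_B,\,a\otimes b]=[e_i^{(n)},a]\otimes b$ for a representative $(e_i^{(n)})$ of an element $e_i\in A_\om$, the image $e_i\otimes1_B$ in fact lands in $(A\otimes_{\min}B)_\om$. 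These are positive contractions, and projections whenever the $e_i$ are. Because $\gm_g=\af_g\otimes\bt_g$, we get $\gm_g(e_i\otimes1_B)=\af_g(e_i)\otimes1_B$, so the orthogonality relation $\gm_g(e_i\otimes1_B)\gm_h(e_j\otimes1_B)=\af_g(e_i)\af_h(e_j)\otimes1_B$ vanishes whenever $g\ne h$ or $i\ne j$, exactly as required. Writing $e=\sum_{g\in T_i,\,i}\af_g(e_i)$, the corresponding sum for $\gm$ is $\tilde e=e\otimes1_B$, so $1-\tilde e=(1-e)\otimes1_B$.

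The heart of the matter is then comparison condition (2), namely $1-\tilde e=(1-e)\otimes1_B\pwl z'$. The key reduction is the claim: \emph{there is a nonzero $z\in A_+$ with $z\otimes1_B\precsim z'$ in $A\otimes_{\min}B$.} Granting this, I apply the weak tracial Rokhlin property of $\af$ to $K$, $\ep_0$, and this particular $z$, obtaining $(K,\ep_0)$-invariant sets $T_1,\dots,T_n$ and $e_i\in A_\om$ with $1-e\pwl z$; by Definition \ref{pwl} there is a representative $(g_n)$ of $1-e$ with each $g_n\ge0$ and $g_n\precsim z$ in $A$. Then $(g_n\otimes1_B)$ is a representative of $(1-e)\otimes1_B$ consisting of positive elements, and $g_n\otimes1_B\precsim z\otimes1_B\precsim z'$, which is precisely $1-\tilde e\pwl z'$. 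The $(K,\ep_0)$-invariance of the $T_i$ is inherited verbatim, so $\gm$ has the weak tracial Rokhlin property; when $\af$ has the tracial Rokhlin property the $e_i$, hence the $e_i\otimes1_B$, are projections and the identical argument gives the tracial Rokhlin property for $\gm$.

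It remains to prove the claim. Since $z'\ne0$, by the standard fact that every nonzero positive element of a minimal tensor product Cuntz-dominates a nonzero elementary tensor, there are nonzero $a_0\in A_+$ and $b_0\in B_+$ with $a_0\otimes b_0\precsim z'$. As $B$ is simple and unital, $b_0$ is full, so $\sum_{k=1}^m w_k^* b_0 w_k=1_B$ for some $m\in\N$ and $w_1,\dots,w_m\in B$; consequently, for any $a_1\in A_+$, $a_1\otimes1_B=\sum_{k=1}^m (1\otimes w_k)^*(a_1\otimes b_0)(1\otimes w_k)\precsim (a_1\otimes b_0)^{\oplus m}=a_1^{\oplus m}\otimes b_0$ in $M_m(A\otimes_{\min}B)$. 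Finally, $A$ is simple and infinite dimensional, hence non-elementary (a finite-dimensional $A$ makes the weak tracial Rokhlin property degenerate, forcing $1-e=0$ and rendering the comparison trivial), so the hereditary subalgebra $\overline{a_0 A a_0}$ contains $m$ mutually orthogonal, mutually Cuntz-equivalent nonzero positive elements with common Cuntz class $[a_1]$, whence $a_1^{\oplus m}\precsim a_0$ and therefore $a_1^{\oplus m}\otimes b_0\precsim a_0\otimes b_0$. Chaining these, $z\otimes1_B=a_1\otimes1_B\precsim a_1^{\oplus m}\otimes b_0\precsim a_0\otimes b_0\precsim z'$ with $z=a_1$, establishing the claim.

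I expect the main obstacle to be this last comparison claim, and within it the technical input that a nonzero positive element of a minimal tensor product dominates an elementary tensor; once that, together with the simplicity-based fitting of $m$ mutually orthogonal Cuntz-equivalent copies, is in hand, tensoring the Rokhlin tower for $\af$ with $1_B$ is routine and the orthogonality and invariance conditions are automatic.
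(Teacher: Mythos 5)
Your argument is correct and follows essentially the same route as the paper: both reduce condition (2) to producing a nonzero $d\in A_+$ with $d\otimes 1_B\precsim z'$ via Kirchberg's slice lemma, fullness of $b_0$ in the simple unital $B$, and the divisibility fact $d^{\oplus m}\precsim a_0$ in the simple non-elementary $A$ (the paper cites Proposition 4.10 of Kirchberg--R{\o}rdam for exactly the step you argue via $m$ orthogonal Cuntz-equivalent elements in $\overline{a_0Aa_0}$), and then both tensor the Rokhlin tower for $\af$ with $1_B$. No substantive differences.
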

\begin{proof}
Let $K\subset G$ be any finite subset and  $\ep>0$ be arbitrary. Since $\af$ has the weak tracial Rokhlin property, we can find $(K,\ep)$-invariant subsets $T_1,\dots, T_n$ of $G$ with the property stated in the definition of weak tracial Rokhlin property. Let $x\in A\otimes_{\min} B$ be a non-zero positive element.\\

We first show that there is an non-zero positive element $d\in A$ such that $d\otimes 1\precsim x$. By Kirchberg's slice lemma (Lemma 2.7 of \cite{Kirchberg1994} or Lemma 4.1.9 of \cite{Rordam2002}), there are non-zero positive elements $a\in A_+$ and $b\in B_+$ and some $z\in A\otimes_{\min} B$ such that $zz^*=a\otimes b$ and $z^*z\in \Her(x)$. This in particular shows that $a\otimes b\precsim x$. Since $B$ is simple and unital, we can find elements $s_1, s_2, \dots, s_n$ in $B$ such that $\sum_{i}s_ibs_i^*=1$. By Proposition 4.10 of \cite{Kirchberg2000} , we can find a non-zero positive contraction $d\in A$ such that $d^{\oplus n}\precsim a$. Hence
\beq
d\otimes 1=\sum_{i} (1\otimes s_i)(d\otimes b)(1\otimes s_i)^*\precsim (d\otimes b)^{\oplus n}\sim d^{\oplus n}\otimes b\precsim a\otimes b\precsim x. 
\eeq

Since $\af$ has the weak tracial Rokhlin property, there exist positive contractions $f_i\in A_{\om}$ such that:
\begin{enumerate}[(1)]
\item $\af_g(f_i)\af_h(f_j)=0$, for any $g\in T_i, h\in T_j$ such that $(g,i)\neq (h,j)$.
\item With $e=\sum_{g\in T_i, 1\leq i\leq n}\af_g(f_i)$, $1-e\pwl d$.
\end{enumerate}
Now consider the positive contractions $f_i\otimes 1$, it's clear that $f_i\otimes 1\in (A\otimes_{\min} B)_{\om}$ and:
\begin{enumerate}[(1)]
\item $\gm_g(f_i\otimes 1)\gm_h(f_j\otimes 1)=(\af_g(f_i)\af_h(f_j))\otimes 1=0$, for any $g\in T_i,h\in T_j$ such that $(g,i)\neq (h,j)$.
\item With $\tilde{e}=\sum_{g_i\in T_i, 1\leq i\leq n}\gm_g(f_i\otimes 1)$, we have
\beq
1-\tilde{e}\pwl d\otimes 1\precsim x.
\eeq
\end{enumerate}
Hence $\gm=\af\otimes \bt$ has the weak tracial Rokhlin property.\\
If $\af$ has the tracial Rokhlin property, then we can require $f_i$ to be non-zero projections, then $f_i\otimes 1$ are also projections, the above proof shows that $\gm$ has the tracial Rokhlin property. 
\end{proof}

\begin{rmk}
Let $G$ be any countable discrete amenable group, it admits at least one action on $\js$ with the weak tracial Rokhlin property (Corollary \ref{bfonz}). We then get lots of actions with the weak tracial Rokhlin property on any $\js$-stable C*-algebra $A$, by the above Proposition. Following the same argument as in \cite{Phillips2012}, we can actually show that the set of actions with the weak tracial Rokhlin property is $G_\dt$-dense in $\Act_G(A)$, where $\Act_G(A)$ is endowed with the topology of pointwise convergence. In particular, by Theorem \ref{WTT}, if $A$ is simple with tracial rank zero, then actions with the tracial Rokhlin property forms a $G_\dt$-dense subset of $\Act_G(A)$.
\end{rmk}

%\begin{dfn}\label{pq}
%(Definition 2.4 of \cite{Matui2013a}) Let $G$ be a countable discrete amenable group %and let $\af\in \Act_G(R)$ be an outer action on the AFD $\text{II}_1$ factor $R$. We% say that $G$ has the property $(Q)$ if the following holds: For any finite subset $K%\subset G$ and $\ep>0$, there exists an $(K,\ep)$-invariant finite subset $T\in G$ an%d a sequence of projections $(p_n)_n$ in $R$ such that
%\beq
%\lVert 1-\sum_{g\in K}\af_g(p_n)\rVert_2\rightarrow 0\quad\text{and}\quad \lVert[x,p_%n]\rVert_2\rightarrow 0, \,\, \forall x\in R.
%\eeq
%as $n\rightarrow \infty$.
%\end{dfn}

%It is show in Theorem 3.6 of \cite{Matui2013a} that if $A$ is a unital, simple, separ%able, nuclear, stably finite, infinite dimensional C*-algebra with finitely many extr%emal tracial states, and if the group $G$ has property (Q), then an action $\af\in \Act_G(A)$ is strongly outer if and only if it has the weak tracial Rokhlin property defined by Matui and Sato.
 
%\begin{cor}
%If $A$ is a unital, simple, separable,  nuclear, stably finite and infinite dimensiona%l C*-algebra with finitely many extreme tracial states, and if the group $G$ has property $(Q)$, then our definition of weak tracial Rokhlin property coincide with that of Matui and Sato (Definition 2.7 of \cite{Matui2013b}).
%\end{cor}

In the following two sections, we generalize results in \cite{Osaka2006b}. The idea is adapted from there.

\section{The Murray-von Neumann semigroup}
For a C*-algebra $A$, we let $\V(A)$ be the Murray-von Neumann semigroup of $A$. We say that $\V(A)$ has strict comparison if for any $p,q\in V(A)$, we have that $\tau(p)<\tau(q)$ for any $\tau\in \TS(A)$ implies $p\lesssim q$. Note that such C*-algebra is said to satisfy Blackadar's Second Fundamental Comparability Question, or that the order of projections is determined by traces in different literature.\\
We say $\V(A)$ is almost divisible, if for any $p\in \V(A)$ and any $n\in \N$, there is some $q\in \V(A)$ such that $nq\leq p \leq (n+1)q$.\\
Note that if $A$ is simple infinite dimensional with real rank zero, then $\V(A)$ is almost divisible, by Lemma 2.3 of \cite{Osaka2006b}.

\begin{lem} \label{large}
Let $A$ be a unital simple separable C*-algebra with Property (SP). Suppose that $\V(A)$ has strict comparison and is almost divisible. Let $(\af, u)\colon G\cra A$ be a cocycle action with the tracial Rokhlin property. Then for every finite subset $F\subset A\rtimes_{\af, u} G$, every $\ep>0$, and every nonzero $z\in (A\rtimes_{(\af,u)} G)_+$, there exists some finite subset $K$ of $G$ and $(K,\ep)$-invariant subsets $T_1, T_2, \dots, T_n$ of $G$,  projections $f_1, \dots, f_n\in A$, an embedding $\phi\colon \oplus_i M_{|T_i|}\otimes f_iAf_i\rightarrow A\rtimes_{\af, u} G$ whose image shall be called $D$ such that
% a projection $e\in A$, a unital subalgebra $D\subset eA\rtimes_\af G e$, a projection $p\in D$, a projection $f\in A$, and an isomorphism , such that:
\begin{enumerate}[(1)]
\item \label{i1} There is a $g_i\in T_i$ for each $i$ such that $\phi(e_{g_i,g_i}^{(i)}\otimes a)=\af_{g_i}(a)$, for any $a\in f_iAf_i$.
\item \label{i2} $\phi(e_{g,g}\otimes f_i)\in A$, for any $g\in T_i$ and $1\leq i\leq n$.
\item \label{i3} $\|\phi(e_{g,h}^{(i)}\otimes a)-\lambda_ga\lambda_h^*\|\leq \ep\|a\|$, for any $g,h\in T_i$ and $a\in f_iAf_i$.
\item \label{i4} Let $\tilde{T}_i=\bigcap_{g\in K} gT_i\cap T_i$. Let $p=\sum_{g\in \tilde{T}_i, 1\leq i\leq n}\phi(e_{g,g}^{(i)})$. We have
\beq
 pb\subset_{\ep} D\quad \text{and}\quad bp\subset_{\ep} D, \quad\text{for any}\,\, b\in F.
\eeq
\item \label{i6} With $p$ defined as in (\ref{i4}), $1-p\precsim z$.
%\item For any two projections $r,s\in D$, if $\tau(r)<\tau(s)$ for any $\tau\in T(A\rtimes_\af G)$, then $r\precsim s$. 
\end{enumerate}
\end{lem}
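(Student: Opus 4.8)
The plan is to manufacture the matrix subalgebra $D$ directly from a Rokhlin tower for $(\af,u)$ and then read off properties (1)--(5). First I would fix, for each $b$ in the finite set $F\subset A\rtimes_{\af,u}G$, a finite sum $\sum_{s\in S_b}a_s^{(b)}\ld_s^{\af}$ that approximates $b$ to within $\ep/3$, with $a_s^{(b)}\in A$; collecting the finitely many group elements that occur and symmetrizing, I obtain a finite set $K\subset G$ (containing $1$) which plays the role of the finite set in the conclusion. I would then invoke the equivalent formulation of the tracial Rokhlin property (Proposition \ref{edef}): applied with this $K$ and a small invariance parameter $\ep_0$, it produces $(K,\ep_0)$-invariant sets $T_1,\dots,T_n$, and then, for a finite set $F_A\subset A$ and tolerance $\ep_1$ to be chosen, mutually orthogonal \emph{projections} $\{e_{g,i}\}_{g\in T_i,\,1\le i\le n}\subset A$ that approximately commute with $F_A$, are approximately permuted by $\af$ (relation (2) of Proposition \ref{edef}), and satisfy $1-\sum_{g,i}e_{g,i}\precsim z_0$ in $A$ for a nonzero positive $z_0$ to be specified in the comparison step.

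With a base point $g_i\in T_i$ chosen in each tower and $f_i:=\af_{g_i}^{-1}(e_{g_i,i})$, the elements
\[
\ld_{gg_i^{-1}}^{\af}\,\af_{g_i}(a)\,(\ld_{hg_i^{-1}}^{\af})^{*}\qquad(g,h\in T_i,\ a\in f_iAf_i)
\]
are, up to the cocycle $u$ and the $\ep_1$-errors in the permutation relation, a system of matrix units tensored with the corner $f_iAf_i$; on the diagonal at $g_i$ this expression is exactly $\af_{g_i}(a)$, giving (1), and at $a=f_i$ it is the floor projection, giving (2). I would then perturb these approximate matrix units to an exact system inside $A\rtimes_{\af,u}G$---this is the stability of the (finite-dimensional) matrix-unit relations used already in the proof of Lemma \ref{mc} (Theorem 2.5.9 of \cite{Lin2001a})---and define $\phi$ on $\oplus_i M_{|T_i|}\otimes f_iAf_i$ by sending the standard generators to the perturbed units; injectivity on each simple summand, together with the orthogonality of distinct towers, makes $\phi$ an embedding with image $D$. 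Relation (3) is then just the comparison of the perturbed unit with $\ld_g^{\af}a(\ld_h^{\af})^{*}$ and holds to within $\ep$. For (4) I would use the Følner condition: for $g\in\tilde T_i=\bigcap_{k\in K}kT_i\cap T_i$ and $s\in S_b\subset K$ one has $s^{-1}g\in T_i$, so that multiplying a diagonal unit $\phi(e_{g,g}^{(i)})\in A$ by $a_s^{(b)}\ld_s^{\af}$ lands, after moving $\ld_s^{\af}$ past the projection, on another matrix unit of the \emph{same} tower; summing over the approximation of $b$ gives $pb\in_{\ep}D$, and symmetrically $bp\in_{\ep}D$.

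The genuinely delicate point is the comparison (5), $1-p\precsim z$, and this is where the infinite group differs from the finite-group situation of \cite{Osaka2006b} (there one takes $T=G$, which is exactly invariant, so no boundary appears). First I would replace $z$: using a conditional-expectation/outerness argument together with Property (SP), I produce a nonzero projection $e_0\in A$ with $e_0\precsim z$ in $A\rtimes_{\af,u}G$, and split it via almost divisibility into orthogonal $e_0',e_0''\le e_0$. Feeding $z_0=e_0'$ to the Rokhlin step gives $1-e'\precsim e_0'$ in $A$, where $e'=\sum_{g\in T_i,i}e_{g,i}$. Writing $1-p=(1-e')\oplus b$ with the boundary projection $b=\sum_{g\in T_i\setminus\tilde T_i,i}e_{g,i}$ and $m_i=|T_i\setminus\tilde T_i|\le\ep_0|T_i|$, it remains to show $b\precsim e_0''$. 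Here I would exploit that in the crossed product each floor is Murray--von Neumann equivalent to its base, $e_{g,i}\sim f_i$ via $\ld_g^{\af}f_i$, and that by Corollary \ref{it} every trace of the crossed product restricts to an $\af$-invariant trace of $A$, for which all floors of a tower have equal trace, so that $b$ carries at most an $\ep_0$-fraction of the mass of $e'$. The main obstacle is to convert this invariant-trace smallness into an honest subequivalence $b\precsim e_0''$ \emph{without} presupposing strict comparison for the crossed product (which is among the very conclusions this section is meant to establish): I expect to push the comparison back into $A$---using that $\sup_\tau\tau(e_{g,i})=\sup_\tau\tau(f_i)$ because $\tau\mapsto\tau\circ\af_g$ permutes $\TS(A)$, together with almost divisibility and strict comparison of $\V(A)$---and the bookkeeping that matches $\ep_0$, $\ep_1$, $F_A$ and the trace-sizes of the $f_i$ against the fixed lower bound $\inf_\tau\tau(e_0'')>0$ is the part that will require real care.
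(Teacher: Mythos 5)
Your proposal follows essentially the same route as the paper's proof: cut $z$ down to orthogonal projections $z_0,z_1$ in $A$, run the Rokhlin towers against the group elements supporting $F$, rigidify the approximate matrix units $\ld_g f_i\ld_h^*$ into an exact copy of $\oplus_i M_{|T_i|}$ (the paper does this by lifting from $(A\rtimes_{\af,u}G)^{\infty}$ via semiprojectivity, which is the same stability-of-matrix-unit-relations fact you invoke), and split $1-p$ into the tracially small remainder $1-e$ plus the F\o lner boundary $\sum_{g\in T_i\setminus\tilde T_i,i}e_g^{(i)}$. The one step you flag as delicate---upgrading smallness of the boundary under all $\af$-invariant traces to an honest subequivalence below $z_0$ in the crossed product without presupposing strict comparison there---is precisely Proposition 2.4 of \cite{Osaka2006b}, which the paper cites and which requires only almost divisibility and strict comparison of $\V(A)$, so your plan closes along the same lines.
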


\begin{proof}
We first choose two nonzero orthogonal positive elements $z_0, z_1\in A_+$ such that $z_0\oplus z_1\precsim z$ according to Lemma 5.1 of \cite{Hirshberg2013}. Since $A$ has property (SP), we could assume that $z_0$ and $z_1$ are projections. Let $\eta=\Min_{\tau\in T(A)} \tau(z_0)>0$. Let $\ep_0=\Min\{\frac{\eta}{2},\ep\}$. Without loss of generality assume that there is a symmetric finite set $K\subset G$ such that elements of $F$ are all of the form $\sum_{g\in K}a_g\lambda_g$, where $a_g$ are elements of $A$ and $\ld_g$ are the canonical unitaries implementing the action. \\
By Definition \ref{WTRP}, we can find $(K,\ep_0)$-invariant subsets $T_1, T_2,\cdots, T_n$ of $G$ and central projections $q_i\in A_{\infty}$ such that 
\begin{enumerate}
\item $\af_g(q_i)\af_h(q_j)=0$, for $g\in T_i$ and $h\in T_j$ such that $(g,i)\neq (h,j)$.
\item  $1-\sum_{g\in T}\af_g(q)\pwl z_1$. 
\end{enumerate}
For $1\leq i\leq n$, let $\{e_{g,h}^{(i)}\}$ be the standard matrix units of $M_{|T_i|}$. By the universal property of finite dimensional C*-algebras, there is an embedding 
\beq
\psi\colon \oplus_{1\leq i\leq n} M_{|T_i|}\rightarrow (A\rtimes_{\af,u} G)^{\infty}
\eeq
such that $\psi(e_{g,h}^{(i)})=\ld_gq_i\ld_h^*$.\\
Using semiprojectivity of $M$, we can lift $\psi$ to a sequence of embeddings $\psi_k \colon \oplus_i M_{|T_i|}\rightarrow (A\rtimes_{\af,u} G)$. We could further assume that $\psi_k(e_{g,g}^{(i)})\in A$, for $g\in T_i$ by standard perturbation argument (See Lemma 2.5.7 of \cite{Lin2001a}). \\
Now fix some $g_i\in T_i$ for each $i$. Let $q_{i,k}=\ld_{g_i}^*\psi_k(e_{g_i,g_i}^{(i)})\ld_{g_i}\in A$. We see that $(q_{i,k})_{k\in\N}$ is a representative of $q_i$. Let
\beq
 F_0=\{\af_g(a_h)\,\vert\, \sum_{h\in K}a_h\ld_h\in F, g\in \cup_i T_i\}\cup \{\af_k(u_{g,h})\,\vert\, g,h,k\in \cup_i(T_i\cup T_i^{-1})\}. 
\eeq
Let $L=\Max \{\|a\|\,\vert\, a\in F_0\}$. Define
\beq
\dt=\Min\{1/2, \,\,\frac{\ep}{|K|(\sum_i|T_i|)(L+5)},\,\, \ep/2\}
\eeq
We can find some large enough $k$ such that:
\begin{enumerate}[$(1^{\prime})$]
\item Let $e_g^{(i)}=\psi_k(e_{g,g}^{(i)})\in A$, we have $\|[e_g, a]\|<\dt$, for any $g\in T$ and any $a\in F_0$.
\item Let $f_i=q_{i,k}$, we have $\|\psi_k(e_{g,h}^{(i)})-\lambda_gf_i\lambda_h^*\|<\dt$, for any $g\in T$.
%\item $\psi_n(e_{g_0,g_0})=u_{g_0}fu_{g_0}=\af_{g_0}(f)$.
\item With $e=\sum_{g\in T_i, 1\leq i\leq k}e_g^{(i)}$, we have $1-e\precsim z_1$.
\end{enumerate}
The last condition comes from the fact that if two projections are close enough, then they are unitarily equivalent.\\
We now define an embedding $\phi\colon \oplus_iM_{|T_i|}\otimes f_iAf_i\rightarrow A\rtimes_{\af,u} G$ by
\beq
\phi(e_{g,h}^{(i)}\otimes a)=\psi_k(e_{g,g_i}^{(i)})\af_{g_i}(a)\psi_k(e_{g_i,h}^{(i)}).
\eeq
and extend linearly.\\
Let $D=\phi(\oplus_iM_{|T_i|}\otimes f_iAf_i)$ be the image of $\phi$.  Let $\tilde{T}_i=\cap_{g\in K} gT_i\bigcap T_i$ and 
\beq
p=\phi(\sum_{g\in \tilde{T}_i,1\leq i\leq n} (e_{g,g}^{(i)}\otimes f_i))=\sum_{g\in \tilde{T}_i, 1\leq i\leq n}e_g^{(i)}.
\eeq
We now verify the conditions required in this lemma. Condition (\ref{i1}) and (\ref{i2}) follows for the definition.\\ 
For condition (\ref{i3}), we have the following estimation:
\begin{align*}
\phi(e_{g,h}^{(i)}\otimes a) &=_{2\dt\|a\|} \ld_gf_i\ld_{g_i}^*\af_{g_i}(a)\ld_{g_i}f\ld_{h}^* \\
&= \ld_gf_iaf_i\ld_{h}^*=\ld_ga\ld_h^*.
\end{align*}
Hence $\|\phi(e_{g,h}^{(i)}\otimes a)-\ld_ga\ld_h^*\|\leq 2\dt\|a\|\leq \ep\|a\|$. \\
For condition (\ref{i4}), Let  $b=\sum_{h\in K}b_h\ld_h\in F$, we have:
\begin{align*}
pb&=\sum_{g\in \tilde{T}_i, 1\leq i\leq n, h\in K} e_g^{(i)}b_h\ld_h\\
&=_{\dt|K|(\sum_i |\tilde{T}_i|)L}\sum_{g\in \tilde{T}_i, 1\leq i\leq n, h\in K} \ld_gf_i\ld_g^*b_h\ld_h\\
&=\sum_{g\in \tilde{T}_i, 1\leq i\leq n, h\in K} \ld_gf_i\ld_{g^{-1}}u(g,g^{-1})b_h\ld_{g^{-1}}^*u(g^{-1},h)u(g^{-1}h, hg^{-1})\ld_{h^{-1}g}^*\\
&=_{\dt_1}\sum_{g\in \tilde{T}_i, 1\leq i\leq n, h\in K} \ld_gf_i\af_{g^{-1}}(u(g,g^{-1})b_h)u(g^{-1},h)u(g^{-1}h, hg^{-1})f_i\ld_{h^{-1}g}^*\\
&=_{\dt_2}\phi(\sum_{g, i, h}e_{g,h^{-1}g}\otimes f_i\af_{g^{-1}}(u(g,g^{-1})b_h)u(g^{-1},h)u(g^{-1}h, hg^{-1})f_i).
\end{align*}
Where $\dt_1=4\dt|K|(\sum_i|\tilde{T}_i|)$ and $\dt_2=2\dt|K|(\sum_i|\tilde{T}_i|)L$. This shows $pb\subset_{\ep} D$. The proof that $bp\subset_{\ep} D$ is similar.\\
%For condition (5), if $T_0=T$, then $hT=T$, for any $h\in K$. Hence
%\begin{align*}
%\|pb-bp\|=\|\sum_{g\in T_0,h\in K} (e_gb_hu_h-b_hu_he_g)\|\\
%\leq \dt|T_0||K|+2\dt|T_0||K|M+\|\sum_{g\in T_0,h\in K} (b_he_gu_h-b_he_{hg}u_h)\|\l%eq \ep.
%\end{align*}
For condition (\ref{i6}), we write 
\beq
1-p=(1-\sum_{g\in T_i, i} e_g^{(i)})+\sum_{g\in T_i\backslash \tilde{T}_i,i}e_g^{(i)}.
\eeq
 For $g\in T_i$, we have $\|e_g^{(i)}-\af_g(f_i)\|<\dt<1$, which implies that the two projections are unitarily equivalent in $A$. Hence for any $\af$-invariant trace $\tau$ and any $g,h\in T_i$, we have $\tau(e_g^{(i)})=\tau(\af_g(f_i))=\tau(f_i)=\tau(e_h^{(i)})$. Therefore
\beq
\tau(\sum_{g\in T_i\backslash \tilde{T}_i, i}e_g^{(i)})=\ep_0\tau(\sum_{g\in T_i, i}e_g^{(i)})\leq \ep_0< \tau(z_0)
\eeq
By Proposition 2.4 of \cite{Osaka2006b} (Although it's stated for real rank zero C*-algebra, but all is needed is that $\V(A)$ is almost divisible, and the same proof works for cocycle actions), we have $ \sum_{g\in T_i\backslash \tilde{T}_i,i}e_g^{(i)}\precsim z_0$ in $A\rtimes_\af G$. Hence 
\beq
1-p=(1-\sum_{g\in T_i,i} e_g^{(i)})+\sum_{g\in T_i\backslash \tilde{T}_i, i}e_g^{(i)}\precsim z_1\oplus z_0\precsim z.
\eeq
\end{proof}

In the following theorem, we need to assume that $A$ has sufficient many projections. Any real rank zero C*-algebra will satisfy the requirement as stated in the theorem.
\begin{thm}\label{scp}
Let $A$ be a unital simple separable C*-algebra with the property that, for any positive element $x\in M_{\infty}(A)$ and any $\ep>0$, there exists a projection $p\in M_{\infty}(A)$ such that $(x-\ep)_+\precsim p\precsim x$. Suppose that $\V(A)$ has strict comparison and is almost divisible. Let $(\af, u)\colon G\cra A$ be an action with the tracial Rokhlin property. Then $\V(A\rtimes_{\af, u} G)$ has strict comparison.
\end{thm}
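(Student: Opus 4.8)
The plan is to show that strict comparison for $\V(A \rtimes_{\af, u} G)$ follows from strict comparison for $\V(A)$ by transporting the comparison problem from the crossed product down to $A$ using the large-subalgebra structure provided by Lemma \ref{large}. So suppose $p, q \in \V(A \rtimes_{\af, u} G)$ are projections (after passing to matrix amplifications, which I absorb into $A$ by replacing $A$ with $M_k(A)$, noting that all hypotheses pass to matrix algebras and that $M_k(A) \rtimes_{\af, u} G \cong M_k(A \rtimes_{\af, u} G)$) with $\tau(p) < \tau(q)$ for every $\tau \in \TS(A \rtimes_{\af, u} G)$. By Corollary \ref{it}, every trace on the crossed product restricts to an $\af$-invariant trace on $A$ and is determined by it, so the hypothesis is really a statement about $\TS^{\af}(A)$. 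Since $\TS(A \rtimes_{\af, u} G)$ is compact (it is weak*-compact) and the difference $\tau(q) - \tau(p)$ is continuous and strictly positive, there is a uniform gap: $\tau(q) - \tau(p) > \sigma > 0$ for all $\tau$. The aim is to convert $p, q$ into projections inside a finite-dimensional-over-corner subalgebra $D$, where comparison is governed by the traces of $A$, and apply strict comparison in $A$.

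First I would fix a nonzero positive $z \in (A \rtimes_{\af, u} G)_+$ small enough (in trace) that cutting it out does not close the gap $\sigma$, and fix a finite set $F$ containing enough spectral data of $p$ and $q$, together with a tolerance $\ep$. Applying Lemma \ref{large} to $F$, $\ep$, and $z$ produces the subalgebra $D \cong \oplus_i M_{|T_i|} \otimes f_i A f_i$ and a projection $p_0 = \sum_{g \in \tilde T_i, i} \phi(e_{g,g}^{(i)})$ with $p_0 b \subset_{\ep} D$ and $b p_0 \subset_{\ep} D$ for all $b \in F$, and with $1 - p_0 \precsim z$. The next step is a standard excision: because $p_0$ approximately commutes with $p$ and $q$ and the compressions $p_0 p p_0$, $p_0 q q_0$ lie approximately in $D$, I can perturb them to genuine projections $p', q' \in D$ with $p' \precsim p \oplus (1-p_0)$ and $p \precsim p' \oplus (1 - p_0)$, and similarly for $q$. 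The point of $1 - p_0 \precsim z$ is that this error term is trace-negligible, so the strict trace inequality survives: $\tau(p') < \tau(q')$ for all traces on $D$ (since traces on $D$ extend to, or are dominated by, traces on the crossed product via $\tau(f_i \cdot)$).

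Now I work entirely inside $D = \oplus_i M_{|T_i|} \otimes f_i A f_i$. Comparison in a direct sum of matrix algebras over corners $f_i A f_i$ reduces, summand by summand and via the isomorphism $M_{|T_i|} \otimes f_i A f_i \cong M_{|T_i|}(f_i A f_i)$, to comparison of projections in matrix algebras over $A$, i.e. in $\V(A)$. The traces on $D$ are exactly the weighted combinations of traces on each $f_i A f_i$, and by the unitary-equivalence computation in condition (\ref{i3})--(\ref{i4}) of Lemma \ref{large} these correspond precisely to the $\af$-invariant traces on $A$ scaled by $\tau(f_i)$; so the strict inequality $\tau(p') < \tau(q')$ holds for all $\tau \in \TS(D)$. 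Since $\V(A)$ has strict comparison (and this passes to $\V(M_{|T_i|}(f_i A f_i))$ because $f_i A f_i$ is a hereditary subalgebra of a simple algebra, with the same trace space up to scaling), I conclude $p' \precsim q'$ in $D$, hence in the crossed product. Reassembling the error terms, $p \precsim q' \oplus (1 - p_0) \precsim q \oplus z$, and choosing $z$ and $\ep$ appropriately from the start lets me absorb the $z$ summand into the gap, yielding $p \precsim q$.

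The main obstacle I anticipate is the bookkeeping in the second step: making the excision precise enough that the trace inequality is genuinely preserved under all the perturbations, and correctly identifying the trace space of $D$ with the scaled invariant traces on $A$. Specifically, I must verify that every trace on $D$ arises from a trace on the crossed product (so that the gap $\sigma$ is available on $D$), which uses condition (\ref{i2}) placing the diagonal corners inside $A$ and condition (\ref{i1}) realizing the top corner as $\af_{g_i}(f_i A f_i)$; the matrix units being only approximately implemented by the $\ld_g$ (condition (\ref{i3})) means the identification of traces is only approximate, and I will need to choose $\ep$ small relative to $\sigma$ and the number of summands so that the approximation error stays below the gap. The hereditary-subalgebra reduction for strict comparison of $f_i A f_i$ and the amplification to $M_{|T_i|}$ are routine once the trace identification is in hand, since strict comparison is inherited by hereditary subalgebras and matrix amplifications in the simple unital setting.
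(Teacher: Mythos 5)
Your overall strategy is the same as the paper's: use Lemma \ref{large} to compress the two projections into the subalgebra $D\cong\oplus_i M_{|T_i|}\otimes f_iAf_i$, keep the trace gap alive by making $1-p_0$ trace-small, compare inside $D$ (equivalently, inside matrix algebras over $A$), and reassemble. But there is a genuine gap at the crucial comparison step. After transporting $p,q$ to projections $p',q'$ in $D$, the trace inequality $\tau(p')<\tau(q')$ is only available for those traces on $D$ induced from $\TS(A\rtimes_{\af,u}G)$, i.e.\ from $\af$-invariant traces on $A$ (Corollary \ref{it}). Your claim that the traces on $D$ ``correspond precisely to the $\af$-invariant traces on $A$ scaled by $\tau(f_i)$'' is false: since each $f_iAf_i$ is a hereditary subalgebra of the simple algebra $A$, the traces on $D$ are arbitrary weighted combinations of arbitrary traces on $A$, not just invariant ones. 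So you cannot invoke strict comparison of $\V(A)$ (or of $\V(M_{|T_i|}(f_iAf_i))$) to conclude $p'\precsim q'$ --- you simply do not know the inequality for enough traces. This is exactly the point where the paper instead applies Proposition 2.4 of \cite{Osaka2006b}, which says that for projections over $A$, an inequality over the $\af$-invariant traces alone already forces Cuntz subequivalence \emph{in the crossed product}; its proof is where the almost-divisibility hypothesis on $\V(A)$ is consumed. Your proposal never uses almost divisibility, which is a symptom of the missing step.

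A secondary issue: you pass from the approximately-$D$-valued compressions $p_0pp_0$, $p_0qp_0$ to ``genuine projections $p',q'\in D$'' by perturbation, but Lemma \ref{large} only gives $p_0b\subset_\ep D$ and $bp_0\subset_\ep D$; it does not give norm-commutation of $p_0$ with $p$, so $p_0pp_0$ need not be close to a projection and functional calculus does not apply. The paper avoids this by using the standing hypothesis that for every positive $x\in M_\infty(A)$ there is a projection with $(x-\ep)_+\precsim e\precsim x$ (applied inside $D$, which is a direct sum of matrix algebras over corners of $A$) to manufacture a projection $\tilde r$ with $(x-2\dt)_+\precsim\tilde r\precsim(x-\dt)_+$ for $x\approx p_0pp_0$, and then controls the error by $1-p_0$ via Lemma 1.8 of \cite{Phillips2014}. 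You never invoke that hypothesis either. Both unused hypotheses of Theorem \ref{scp} mark the two places where your argument needs repair.
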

\begin{proof}
Let $r,s$ be two projections such that $\tau(p)<\tau(q)$, for any $\tau\in \TS(A)$. Let 
\beq
\ep=\min_{\tau\in \TS(A)} \{\tau(r)-\tau(s)\}>0.
\eeq
Let $\dt=\ep/3$. By Lemma \ref{large}, there exists some finite subset $K$ of $G$ and $(K,\dt)$-invariant subsets $T_1, T_2, \dots, T_n$ of $G$, projections $f_1, \dots, f_n\in A$, an embedding $\phi\colon \oplus_i M_{|T_i|}\otimes f_iAf_i\rightarrow A\rtimes_{\af} G$ whose image shall be called $D$ such that
% a projection $e\in A$, a unital subalgebra $D\subset eA\rtimes_\af G e$, a projection $p\in D$, a projection $f\in A$, and an isomorphism , such that:
\begin{enumerate}[(1)]
\item \label{i1}there is a $g_i\in T_i$ for each $i$ such that $\phi(e_{g_i,g_i}^{(i)}\otimes a)=\af_{g_i}(a)$, for any $a\in f_iAf_i$.
\item \label{i5}There is a projection $p\in D$ such that
\beq
 pr,ps\subset_{\dt} D\quad \text{and}\quad rp, sp\subset_{\dt} D.
\eeq
\item \label{i6} With the same $p$ as in (\ref{i5}), $\tau(1-p)<\ep/3$, for any $\tau\in \TS(A)$.
\end{enumerate}
Let $x\in D$ be a positive element such that $\|x-prp\|<\dt$. Since $D$ is isomorphic to finite direct sum of matrix algebras over $A$, there is a projection $\tilde{r}\in D$ such that $(x-2\dt)_+\precsim \tilde{r}\precsim (x-\dt)_+$ in $D$. We estimate that $\tilde{r}\precsim (x-\dt)_+\precsim prp\precsim r$. Let $r_0=\tilde{r}\oplus (1-p)$. By Lemma 1.8 of \cite{Phillips2014}, we have
\beq
r\approx (r-\dt)_+ \precsim (prp-\dt)_+\oplus (1-p)\precsim (x-2\dt)_+\oplus (1-p)\precsim r_0.
\eeq
For any $\tau\in \TS(A\rtimes_{\af, u}G)$, we have $\tau(r)> \tau(r_0)-\ep/3$. Similarly, there is a projection $s_0\in D$ such that $s_0\precsim s$ and $\tau(s_0)>\tau(s)-\ep/3$, for any $\tau\in \TS(A\rtimes_{\af, u} G)$. \\
Let $d=\oplus_i d_i$, $e=\oplus_i e_i$ be projections in $\oplus_i M_{|T_i|}\otimes f_iAf_i$ such that $\tilde{r}=\phi(d)$ and $s_0=\phi(e)$. Each $M_{|T_i|} \otimes f_iAf_i$ is simple, hence $d_i\precsim \diag\{e_{g_i,g_i}^{(i)}\otimes f_i, \dots e_{g_i,g_i}^{(i)}\otimes f_i\}$ in $M_{k_i}(M_{|T_i|} \otimes f_iAf_i)$ for some $k_i\in \N$. Let
\beq
f=\diag\{e_{g_1,g_1}^{(1)}\otimes f_1, \dots e_{g_1,g_1}^{(1)}\otimes f_1, \dots, e_{g_n,g_n}^{(n)}\otimes f_n, \dots e_{g_n,g_n}^{(n)}\otimes f_n\}
\eeq
Let $k=\sum_i k_i|T_i|$. Define $\iota\colon \oplus_i M_{|T_i|}\otimes f_iAf_i\rightarrow M_k(A)$ by
 \beq
 (a_1, a_2,\dots, a_n)\mapsto \diag\{a_1, a_2\dots, a_n, 0, 0, \dots\}.
\eeq
Then we have $\iota(d)\precsim f$ in $fM_{k}(A)f$. Let $\tilde{d}\in fM_{k}(A)f$ be a projection such that $\iota(d)\approx \tilde{d}$. Since $\phi(e_{g_i,g_i}^{(i)}\otimes a)=\af_{g_i}(a)$, for any $a\in f_iAf_i$, we can see that for any element $a\in \oplus_i M_{|T_i|}\otimes f_iAf_i$, we have $\phi(a)\approx \iota(a)$ in $M_{\infty}(A\rtimes_{\af, u} G)$. Let $r_1=\tilde{d}\oplus (1-p)\in M_{k+1}(A)$. We have
\beq
r_1\approx \iota(d)\oplus (1-p)\approx \phi(d)\oplus (1-p)=r_0.
\eeq
Similarly, there is a projection $s_1\in M_{l}(A)$ such that $s_1\approx s_0$. Let $\tau$ be any $\af$-invariant trace on $A$, which comes from a trace $\om$ on $A\rtimes_{\af, u} G$ by Corollary \ref{it}. We can compute
\beq
\om(s_1)-\om(r_1)=\om(s_0)-\om(r_0)> \om(s)-\ep/3-(\om(r)+\ep/3)>0
\eeq
By Proposition 2.4 of \cite{Osaka2006b}, we have $r_1\precsim s_1$. Hence
\beq
r\precsim r_0\approx r_1\precsim s_1 \approx s_0\precsim s.
\eeq
\end{proof}

\section{Real and stable rank of the crossed product}
The following lemma says that any single self-adjoint element of the crossed product could be 'tracially' approximated by subalgebras with real rank zero. It is weaker than tracial approximation formulated in (Definition 2.2, \cite{Elliott2008}), but it's good enough to deduce that the crossed product has real rank zero, at least when we know the crossed product has strict comparison for projections.

\begin{lem}\label{seta}
Let $A$ be a simple infinite dimensional C*-algebra with real rank zero and has strict comparison for projections. Let $(\af,u)\colon G\cra A$ be a cocycle action with the tracial Rokhlin property, where $G$ is a countable discrete amenable group. Then for any self-adjoint element $a\in A\rtimes_{\af, u} G$, any $\ep>0$ and any nonzero positive element $z\in A\rtimes_{\af, u} G$, there is a  C*-subalgebra $D$ of $A\rtimes_{\af, u} G$ with real rank zero and a projection $p\in D$ such that:
\begin{enumerate}[(1)]
\item $\|pa-ap\|<\ep.$
\item $pap\in_\ep D.$
\item $1-p\precsim z.$
\end{enumerate}
\end{lem}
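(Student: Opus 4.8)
The plan is to apply Lemma \ref{large} to the single self-adjoint element and then to read off the three conclusions, the last of which requires one genuinely new commutation estimate. First I would check that all hypotheses of Lemma \ref{large} hold: a unital simple infinite dimensional \ca\ of real rank zero has Property (SP) and, by Lemma 2.3 of \cite{Osaka2006b}, has almost divisible $\V(A)$, while strict comparison is assumed; so Lemma \ref{large} applies to $F=\{a\}$. It yields a finite $K\subset G$, $(K,\ep)$-invariant sets $T_1,\dots,T_n$, projections $f_i\in A$, and an embedding $\phi\colon\bigoplus_i M_{|T_i|}\otimes f_iAf_i\to A\rtimes_{\af,u}G$ with image $D$, the tower projections $e_g^{(i)}=\phi(e_{g,g}^{(i)}\otimes f_i)\in A$, and the projection $p=\sum_{g\in\tilde T_i,\,i}e_g^{(i)}$, satisfying $pa\in_\ep D$, $ap\in_\ep D$, and $1-p\precsim z$.

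Next I would record that $D$ has real rank zero: each $f_iAf_i$ is a \hsa\ of the real rank zero algebra $A$ and hence has real rank zero; thus $M_{|T_i|}\otimes f_iAf_i\cong M_{|T_i|}(f_iAf_i)$ is a matrix algebra over a unital real rank zero algebra and so has real rank zero, and a finite direct sum of such algebras has real rank zero. Granting this, conclusion (3) is exactly the last assertion of Lemma \ref{large}, and conclusion (2) follows at once, since $pa\in_\ep D$ together with $p\in D$ gives $pap=(pa)p\in_\ep D$.

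The delicate point is the norm commutation estimate (1), $\|pa-ap\|<\ep$, and I expect it to be the main obstacle. The difficulty is intrinsic to infinite amenable $G$: the implementing unitary $\ld_h$ shifts the Rokhlin tower by $h$ (so that $\af_h(e_g^{(i)})\approx e_{hg}^{(i)}$ up to the cocycle), and therefore the commutator of the \emph{sharp} tower projection $p$ with $a=\sum_{h\in K}a_h\ld_h$ telescopes to a term supported on the Følner boundary $T_i\setminus hT_i$; although this set has small cardinality, the associated difference of mutually orthogonal projections has norm close to $1$, so $(K,\ep)$-invariance controls traces but not the norm of the commutator. (For a finite group the whole group serves as the invariant set, no boundary occurs, and $\sum_g e_g$ commutes with $\ld_h$ exactly, which is why the finite case is painless.) To repair this I would first pass to a \emph{slowly varying} positive contraction $w=\sum_{g\in T_i,\,i}c_g\,e_g^{(i)}$ whose coefficients satisfy $|c_g-c_{h^{-1}g}|$ small for every $h\in K$; then $[w,\ld_h]\approx\sum_{g,i}(c_g-c_{h^{-1}g})e_g^{(i)}\ld_h$ has norm approximately $\max_g|c_g-c_{h^{-1}g}|$, so $\|[w,a]\|$ is small while one can still arrange $1-w\precsim z$. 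The crux is then to upgrade this almost central positive contraction to an honest projection $p\in D$ retaining both $\|[p,a]\|<\ep$ and $1-p\precsim z$; this is precisely where I expect to need real rank zero of $A$, Property (SP), and the strict comparison of $A\rtimes_{\af,u}G$ furnished by Theorem \ref{scp}, and it is the step demanding the most care.
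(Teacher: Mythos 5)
Your reduction to Lemma \ref{large}, the verification of its hypotheses (Property (SP) and almost divisibility of $\V(A)$ from simplicity, infinite dimensionality and real rank zero), the observation that $D$ has real rank zero, and your diagnosis of the F\o lner-boundary obstruction to the commutator estimate all agree with what the paper does. The problem is that your proposal stops exactly where the lemma's real content lies. The passage from the slowly varying positive contraction $w=\sum_{g,i}c_g e_g^{(i)}$ to an honest projection $p$ with $\|[p,a]\|<\ep$ and $1-p\precsim z$ is not merely ``the step demanding the most care''; it cannot be carried out by the obvious device (functional calculus on $w$), because $w$ is a linear combination of the mutually orthogonal tower projections, so every spectral projection of $w$ is again a sharp sum of $e_g^{(i)}$'s and reintroduces a boundary commutator of norm close to $1$. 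No alternative construction is given, so the proof is incomplete at its central point. Note also that once $p$ is modified away from the $q$ supplied by Lemma \ref{large}, conclusion (3) is no longer ``exactly the last assertion of Lemma \ref{large}'' as you assert; it must be re-derived for the new $p$.

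The paper's actual route avoids the slowly varying contraction entirely. It writes $a=_{2\dt}\bar d+(1-q)a(1-q)$ with $\bar d=\phi(c)$ self-adjoint in $D$ and $q=\sum_{g\in\tilde T_i,i}e_g^{(i)}$, and then applies Lemma 4.4 of \cite{Osaka2006b} inside each summand $M_{|T_i|}\otimes f_iAf_i$ (this is the real-rank-zero input) to produce a projection $s_i$ satisfying $e_i-q_i\le s_i\precsim r_i$, where $r_i$ is supported on $(2N+1)\,|T_i\setminus\tilde T_i|$ tower levels and is therefore small in trace, together with $\|[s_i,c_i]\|<1/N\le\ep/2$. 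The desired projection is $p=e-s\le q$ with $s=\phi(\oplus_i s_i)$: since $p(1-q)=0$ the corner $(1-q)a(1-q)$ drops out of $[p,a]$, the commutator with $\bar d$ equals $\phi(\oplus_i[s_i,c_i])$ and is small, $pap=pqaqp\in_\ep D$, and $1-p=(1-e)+s\precsim z_1\oplus z_0\precsim z$ using the trace bound $\tau(s)\le\eta\,\tau(q)\le\tau(z_0)$ and strict comparison. In short, the missing idea in your proposal is precisely the almost-commuting boundary-absorbing projection of Lemma 4.4 of \cite{Osaka2006b}; without it, or a substitute for it, the argument does not close.
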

\begin{proof}
Let $a$, $\ep$ and $z$ are given as in this lemma. Without loss of generality assume $\|a\|\leq 1$. Choose two nonzero orthogonal projections $z_0$ and $z_1$ in $A$ such that $z_0+z_1\precsim z$. Let $\eta=\Min\{\tau(z_0)\,\vert\,\tau\in \TS(A)\}$. 
 Let
\beq
\dt=\Min\{\ep/4,\,\, 1-\eta/2,\,\, \frac{\eta\ep}{4+(3+\eta)\ep}\}
\eeq
 By Lemma \ref{large}, there exist projections $f_i\in A$, finite subsets $\tilde{T}_i\subset T_i\subset G$ with $\frac{|\tilde{T}_i|}{|T_i|}>1-\dt$, an embedding $\phi\colon \oplus_i M_{|T_i|}\otimes f_iAf_i\rightarrow A\rtimes_\af G$ whose image shall be called $D$, and a projection $q\in D$ such that
\begin{enumerate}
\item Let $e_g^{(i)}=\phi(e_{g,g}^{(i)}\otimes f_i)$, for $g\in T_i$, we have $e_g^{(i)}\in A$.
\item $q=\sum_{g\in \tilde{T_i}, i} e_g^{(i)}$.
\item There exist $d_1$ and $d_2$ in $D$ such that $\|qa-d_1\|<\dt$ and $\|aq-d_2\|<\dt$.
\item $1-q\precsim z_1$.
\end{enumerate}
%We first enlarge $D$ to a unital subalgebra $D^{\prime}$ as in Lemma \ref{uls}.
We can write 
\beq
a=qa+(1-q)aq+(1-q)a(1-q)=_{2\dt} d_1+(1-q)d_2+(1-q)a(1-q).
\eeq
Let $d=d_1+(1-q)d_2$ and $\bar{d}=\frac{d+d^*}{2}$. Then $\bar{d}$ is a self-adjoint element in $D$ such that $\|a-(\bar{d}+(1-q)a(1-q))\|<2\dt$. Let $c=\oplus_i c_i$ be a self-adjoint element in $\oplus_i M_{|T_i|}\otimes f_iAf_i$ such that $\bar{d}=\phi(c)$. \\
Let $N$ be an integer such that $2/\ep\leq N\leq 2/\ep+1$. By our choice of $\dt$, we have 
\beq
(2N+1)|T_i\backslash \tilde{T}_i|\leq (4/\ep+3)\frac{\dt}{1-\dt}|\tilde{T}_i|\leq \eta |\tilde{T}_i|
\eeq
%Hence there exists $m_i\in \N$ such that 
%\beq
%|\tilde{T}_i|\geq m_i\geq (4/\ep+1)|T\backslash T_0|.
%\eeq
Choose a subset $S_i\subset \tilde{T}_i$ such that $|S_i|=(2N+1)|T_i\backslash\tilde{T}_i|$. Let $r_i=\sum_{g\in S_i} e_{g,g}^{(i)}\otimes f_i$, let $q_i=\sum_{g\in \tilde{T}_i} e_{g,g}^{(i)}\otimes f_i$ and $e_i=\sum_{g\in T_i} e_{g,g}^{(i)}\otimes f_i$. Let $e=\phi(\oplus_i e_i)$. Note that $q=\phi(\oplus_i q_i)$. By Lemma 4.4 of \cite{Osaka2006b}, there is a projection $s_i$ in  $M_{|\tilde{T}_i|}\otimes f_iAf_i$ such that
\beq
e_i-q_i\leq s_i\precsim r_i,\hspace{1cm} \|s_ic_i-c_is_i\|<\frac{1}{N}\leq \ep/2
\eeq
Let $s=\phi(\oplus_i s_i)\geq e-q$. We have $\|s\phi(c)-\phi(c)s\|<\ep/2$. Let $p=e-s\leq q$. For Condition (1), we have:
\begin{align*}
\|pa-ap\|&=\|p(a-((1-q)a(1-q))-(a-(1-q)a(1-q))p\|\\
&\leq 2\dt+\|p\bar{d}-\bar{d}p\|=2\dt+\ep/2\leq \ep.
\end{align*}
Since $p\leq q$, we have $pap=pqaqp\in_\ep D$, this proves Condition (2).\\
Finally, for any $\tau\in T(A\rtimes_\af G)$, we have 
\beq
\tau(s)=\tau(\phi(\oplus_i s_i))\leq \eta\tau(\phi(\oplus_i q_i))\leq \tau(z_0).
\eeq
By Proposition 2.4 of \cite{Osaka2006b}, this shows that $s\precsim z_0$. Hence
\beq
1-p=(1-e)+s\leq (1-q)\oplus s\precsim z_1\oplus z_0\precsim z.
\eeq
\end{proof}
%\begin{prp}
%Let $A$ be a unital simple C*-algebra. If for any self-adjoint element $$
%\end{prp}

\begin{prp}\label{rr0}
Let $A$ be a unital simple C*-algebra with strict comparison for projections. Suppose for any self-adjoint element $a\in A$, any $\ep>0$ and any nonzero positive element $z\in A$, there is a unital C*-subalgebra $D$ of $A$ with real rank zero and $1_D=p$ such that:
\begin{enumerate}[(1)]
\item $\|pa-ap\|<\ep$,
\item $pap\in_\ep D$,
\item $1-p\precsim z$. 
\end{enumerate}
Then $A$ has real rank zero.
\end{prp}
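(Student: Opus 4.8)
The plan is to verify the Brown--Pedersen characterization of real rank zero: it suffices to show that the self-adjoint elements of finite spectrum are dense in $A_{\sa}$. Replacing $a$ by an affine function of itself (which preserves finiteness of spectrum), I may fix a self-adjoint $a$ with $0\le a\le 1$ and, given $\ep>0$, aim to produce a self-adjoint element of finite spectrum within $\ep$ of $a$. Before doing so I would record that $A$ has property (SP): feeding a nonzero positive element $z$ of a prescribed hereditary subalgebra into the hypothesis (with $a=0$) yields a projection $1-p\precsim z$, which -- unless $p=1$, in which case $A$ itself is already $\ep$-approximated by the real-rank-zero algebra $D$ -- produces a nonzero projection inside $\overline{zAz}$. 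Using (SP) together with simplicity one can then fix, for any $\gamma>0$, a nonzero projection $z_0$ with $\tau(z_0)<\gamma$ for all $\tau\in\TS(A)$.

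First I would apply the hypothesis to $a$ with a small tolerance $\delta>0$ and with $z=z_0$, obtaining a real-rank-zero subalgebra $D$ with $1_D=p$ such that $\|pa-ap\|<\delta$, $pap\in_{\delta}D$, and $1-p\precsim z_0$. The estimate $\|pa-ap\|<\delta$ forces $\|pa(1-p)\|,\|(1-p)ap\|<\delta$, so $\|a-(pap+(1-p)a(1-p))\|<2\delta$. Since $D$ has real rank zero, I choose a self-adjoint $b\in D$ of finite spectrum with $\|pap-b\|<\delta$; note $b=pbp$, so $b$ is orthogonal to the corner $Q:=(1-p)A(1-p)$, and $\|a-(b+(1-p)a(1-p))\|<3\delta$. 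Taking $3\delta<\ep$, the problem reduces to approximating the corner term $a':=(1-p)a(1-p)\in Q_{\sa}$ by a self-adjoint element of finite spectrum within $\ep$: any such approximant, being supported under $1-p$, is orthogonal to $b$, so adding it to $b$ yields an element of $A$ whose spectrum is the finite union of the two spectra, completing the verification.

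This last reduction is the heart of the matter and is where strict comparison for projections must be used. The corner carries no small norm bound, so its spectrum cannot be resolved by naively iterating the splitting: the norms of the successive corners do not decrease. The point to exploit is that $1-p\precsim z_0$ is Cuntz-small, so by strict comparison $1-p$ is dominated by, and hence Murray--von Neumann equivalent to a subprojection of, a projection of small trace drawn from the real-rank-zero algebra $D$. The plan is to use this domination to absorb the corner into the real-rank-zero part -- manufacturing, with the abundant projections of $D$ and strict comparison, the finitely many spectral projections needed to resolve $a'$ up to norm $\ep$ -- and then to recombine with $b$. I expect this absorption to be the main obstacle: it is the only place where the finite spectrum of the a priori uncontrolled corner is produced, it cannot succeed by norm-iteration alone, and it is driven entirely by strict comparison together with property (SP). Once it is in hand, the remaining check -- that $b$ plus the finite-spectrum approximation of $a'$ has finite spectrum and lies within $\ep$ of $a$ -- is immediate from the orthogonality of their supports.
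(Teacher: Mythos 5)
You have the right opening moves (the splitting $a\approx pap+(1-p)a(1-p)$, the use of real rank zero of $D$ on the $pap$ part, the observation that iterating on the corner cannot work because its norm does not shrink), but the step you defer --- ``resolving'' $a'=(1-p)a(1-p)$ by a finite-spectrum element manufactured from projections of $D$ and strict comparison --- is a genuine gap, and I do not believe it can be filled in the form you describe. All the projections you control live in $D$, hence under $p$, and are orthogonal to the corner $(1-p)A(1-p)$; nothing in the hypotheses gives $(1-p)A(1-p)$ real rank zero or any supply of spectral projections for $a'$, so there is no mechanism for the proposed absorption. Since this is the only place where the approximation is actually completed, the proof as outlined does not close.

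The paper's argument never touches the corner $(1-p)a(1-p)$ at all, and this is the idea you are missing. One works with the invertibility formulation of real rank zero and assumes $a$ is not invertible, so $0\in\spec(a)$ and $g(a)\neq 0$ for a bump function $g$ supported in $[-\ep_0,\ep_0]$ with $g(0)=1$. Setting $\ep_1$ in terms of $\min_{\tau}\tau(g(a))$ and taking $\tau(1-p)$ much smaller, one finds (by real rank zero of $D$) a projection $r$ in the hereditary subalgebra of $D$ generated by $g(d)$, where $d\approx pap$; then $\tau(r)\gtrsim\tau(g(a))>\tau(1-p)$ for all $\tau$, so strict comparison gives a partial isometry $v$ with $vv^*=1-p$ and $v^*v=s\leq r\leq p$. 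Because $s$ sits under an approximate spectral projection of $d$ near $0$, one gets $\|sa\|,\|as\|\leq\ep$. Writing $a$ as a $3\times 3$ matrix over $(p-s)\oplus s\oplus(1-p)$, the $(p-s)$-corner is approximated by an invertible self-adjoint element of $(p-s)D(p-s)$, the $s$-row and $s$-column are nearly zero, and one then inserts $\ep_0 v$ and $\ep_0 v^*$ in the off-diagonal slots linking $s$ and $1-p$: the resulting matrix is invertible \emph{regardless} of the entry $(1-p)a(1-p)$, because the anti-diagonal block $\left(\begin{smallmatrix}0&\ep_0 v^*\\ \ep_0 v&\ast\end{smallmatrix}\right)$ is invertible for any self-adjoint $\ast$. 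So the small projection $1-p$ is neutralized by pairing it, via Murray--von Neumann equivalence, with a small projection inside $D$ on which $a$ is almost zero --- not by spectrally resolving the corner. To repair your proof you would need to import exactly this device (or an equivalent one), at which point you would in effect be reproducing the paper's argument.
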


%\begin{dfn}
%Let $A$ be a unital C*-algebra. We say $A$ has tracial approximation for single self-adjoint element by real rank zero subalgebras if for any self-adjoint element $a\in A$, any $\ep>0$ and any nonzero positive element $z\in A$%, there is a unital C*-subalgebra $D$ of $A$ with real rank zero and $1_D=p$ such that:
%\begin{enumerate}
%\item $\|pa-ap\|<\ep.$
%\item $pap\in_\ep D.$
%\item $1-p\precsim z.$ and $1-p\precsim p$.
%\end{enumerate}
%We let $\mathscr{I}$ be the class of all C*-algebra with this property.
%\end{dfn}

%\begin{lem}
%Let $A$ be a simple unital C*-algebra in with $\mathscr{I}$. Suppose in addition that $A$ has strict comparison for projections. Then $A$ has real rank zero.
%\end{lem}

\begin{proof}
Let $a$ be a self-adjoint element in $A$ and $\ep>0$ be given. Without loss of generality assume $\|a\|=1$. Assume that $a$ is not invertible, otherwise there is nothing to prove. Let $\ep_0=\frac{\ep}{26}$. Let $g\colon [-1,1]\rightarrow [0,1]$ be a continuous function such that
\beq
\text{supp}\,\, g=[-\ep_0,\ep_0] \text{\,\, and\,\,} g(0)=1.
\eeq
Let 
\beq
\ep_1=\Min\{\ep_0, \,\,\frac{1}{4}\Min\{\tau(g(a))\,\vert\,\tau\in \TS(A)\}\}>0.
\eeq
Choose $\dt>0$ such that whenever $a,b$ are normal elements with norm less or equal to 1 and $\|a-b\|<\dt$, then $\|g(a)-g(b)\|\leq \ep_1$, according to Lemma 2.5.11 of \cite{Lin2001a}. We further require that $\dt\leq \ep_1$. Since $A$ has strict comparison, we can find a C*-subalgebra $D$ of $A$ with real rank zero and a projection $p\in D$ such that:
\begin{enumerate}[(1)]
\item $\|pa-ap\|<\dt/2.$
\item there is some self-adjoint element $d\in D$ such that $\|pap-d\|<\dt$.
\item $\tau(1-p)<\dt/2$, for any $\tau\in \TS(A)$.
\end{enumerate}
Replacing $d$ by $pdp$, we may assume that $d\in pDp$. We may also assume that $\|d\|\leq 1$. Since $pDp$ has real rank zero for being a corner of real rank zero C*-algebra, there is a projection $r\in g(d)Dg(d)$ such that $\|rg(d)r-g(d)\|<\dt$. In the following, We shall show that $1-p\precsim r\leq p$ and, for any projection $s\leq r$, we have $\|sa\|<\ep$, $\|as\|<\ep$.\\
The choice of $\dt$ shows that 
\beq
g(a)=_{\ep_1} g(pap+(1-p)a(1-p))=g(pap)+g((1-p)a(1-p))
\eeq
 and $g(pap)=_{\ep_1} g(d)$. Hence for any $\tau\in \TS(A)$, we can compute:
\begin{align*}
\tau(r)&\geq\tau(rg(d)r)\geq\tau(g(d))-\dt\\
&\geq\tau(g(pap))-\ep_1-\ep_1\\
&\geq \tau(g(a))-\tau(g((1-p)a(1-p)))-3\ep_1\\
&\geq \tau(g(a))-\tau(1-p)-3\ep_1>\tau(1-p). 
\end{align*}
Since $A$ has strict comparison, this shows that $1-p\precsim r$.\\
Next, since $r\in g(d)Dg(d)$, we have
\beq
\|rd\|=\lim_{n\rightarrow \infty} \|rg(d)^{1/n}d\|\leq \ep_0.
\eeq
Hence for any projection $s\leq r$, $\|sd\|=\|srd\|\leq\ep_0$. Similarly $\|ds\|\leq \ep_0$. Now combine the fact that $\|pa-pa\|<\dt/2<\ep_0$ and $\|pap-d\|<\ep_0$, we can get
\beq
\|sa\|=\|s(pap)+spa(1-p)+s(1-p)a\|\leq (\|sd\|+\ep_0)+\ep_0\leq \ep.
\eeq
And similarly $\|as\|<\ep$.\\
Now since $1-p\precsim r$, let $v$ be a partial isometry such that $vv^*=1-p$ and $v^*v=s\leq r\leq p$. Using the decomposition $1=(p-s) \oplus s\oplus (1-p)$, we may write $a$ in the matrix form:
\[
a=\left (\begin{array}{ccc}
(p-s)a(p-s) & (p-s)as &(p-s)a(1-p)\\
sa(p-s) & sas & sa(1-p)\\
(1-p)a(p-s) & (1-p)as &(1-p)a(1-p)\\
\end{array}
\right )
\]

Now $(p-s)a(p-s)=_{\ep_0}(p-s)d(p-s)\in (p-s)D(p-s)$. Since $(p-s)D(p-s)$ has real rank zero, there is an invertible self-adjoint element $d_1\in (p-s)D(p-s)$ such that $\|(p-s)d(p-s)-d_1\|<\ep_0$. Hence 
\begin{align*}
a&=_{23\ep_0}
\left (\begin{array}{ccc}
(p-s)d(p-s) & 0 & 0\\
0 & 0 & 0\\
0 & 0 & (1-p)a(1-p)\\
\end{array}
\right )\\
&\quad\\
&=_{2\ep_0}
\left (\begin{array}{ccc}
d_1 & 0 & 0\\
0 & 0 & \ep_0v^*\\
0 & \ep_0v &(1-p)a(1-p)\\
\end{array}
\right )
\end{align*}
The last matrix corresponds to a invertible self-adjoint element $a_0$ in $A$. By our choice of $\ep_0$, we have $\|a-a_0\|<\ep$. 
\end{proof}

Combine Theorem \ref{scp}, Lemma \ref{seta} and Proposition \ref{rr0}, we get the following:
\begin{thm}
Let $A$ be a simple unital C*-algebra with real rank zero and has strict comparison for projections. Let $(\af, u)\colon G\cra A$ be a cocycle action with the tracial Rokhlin property. Then $A\rtimes_\af G$ has real rank zero.
\end{thm}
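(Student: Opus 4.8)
The plan is to set $B=A\rtimes_{\af,u}G$ and to obtain real rank zero for $B$ by verifying, for $B$ in place of $A$, the two standing hypotheses of Proposition \ref{rr0}: that $B$ has strict comparison for projections, and that every self-adjoint element of $B$ admits the stated tracial approximation by a unital real-rank-zero subalgebra. Theorem \ref{scp} supplies the first, and Lemma \ref{seta} supplies (a version of) the second, so the argument is essentially a matter of packaging the three prior results and matching their hypotheses.

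First I would check that $A$ meets the hypotheses of Theorem \ref{scp}. Since $A$ has real rank zero, the remark immediately preceding Theorem \ref{scp} gives that $A$ has the required projection-approximation property: for every positive $x\in M_\infty(A)$ and every $\ep>0$ there is a projection $p$ with $(x-\ep)_+\precsim p\precsim x$. Strict comparison of $\V(A)$ is assumed, and almost divisibility of $\V(A)$ follows from Lemma 2.3 of \cite{Osaka2006b}, since $A$ is simple, infinite dimensional, and of real rank zero. With $A$ unital, simple and separable and $(\af,u)$ of tracial Rokhlin property, Theorem \ref{scp} yields that $\V(A\rtimes_{\af,u}G)=\V(B)$ has strict comparison, which is exactly the global hypothesis of Proposition \ref{rr0}.

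Next I would fix a self-adjoint $a\in B$, an $\ep>0$, and a nonzero positive $z\in B$, and apply Lemma \ref{seta} (whose hypotheses on $A$ and $(\af,u)$ are precisely those assumed here) to produce a real-rank-zero subalgebra $D\subseteq B$ and a projection $p\in D$ with $\|pa-ap\|<\ep$, $pap\in_\ep D$, and $1-p\precsim z$. The one point of friction is that Proposition \ref{rr0} demands a \emph{unital} subalgebra with unit $p$, whereas Lemma \ref{seta} only gives $p\in D$. I would close this gap by replacing $D$ with the corner $pDp$: a corner of a real-rank-zero algebra again has real rank zero, $pDp$ is unital with $1=p$, and the relation $pap\in_\ep D$ upgrades to $pap\in_\ep pDp$ because $pap=p(pap)p$. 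Thus $pDp$ and $p$ witness the approximation hypothesis of Proposition \ref{rr0} for the element $a$.

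With both hypotheses in hand, Proposition \ref{rr0} applied to $B$ gives that $B$ has real rank zero, completing the proof. I do not expect a genuine analytic obstacle here: the difficulty was already absorbed into Theorem \ref{scp}, Lemma \ref{seta}, and Proposition \ref{rr0}, and this final step is their combination. The only care required is in hypothesis-matching, namely confirming almost divisibility of $\V(A)$, performing the corner adjustment $D\rightsquigarrow pDp$ to meet the unitality requirement of Proposition \ref{rr0}, and recording that $B$ is simple, which is guaranteed by the tracial Rokhlin property of $(\af,u)$.
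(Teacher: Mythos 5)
Your proposal is correct and follows exactly the paper's route: the paper proves this theorem by the single line ``Combine Theorem \ref{scp}, Lemma \ref{seta} and Proposition \ref{rr0}.'' Your additional bookkeeping (checking almost divisibility of $\V(A)$ via Lemma 2.3 of \cite{Osaka2006b}, and passing to the corner $pDp$ so that the subalgebra is unital with unit $p$ as Proposition \ref{rr0} requires) is sound and fills in details the paper leaves implicit.
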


Now let's turn to the case of stable rank one. We first see that Lemma 5.2 of \cite{Osaka2006b} could be generalized to actions of general amenable groups, because it's proof depends only on Lemma 2.5 and Lemma 2.6 of \cite{Osaka2006b} and some other lemmas unrelated to crossed product. We could use Lemma \ref{large} to replace the first one, and Lemma 2.6 of \cite{Osaka2006b} could be generalized to actions of amenable groups with the same proof. Hence we have:
\begin{lem}\label{sr1l}
Let $A$ be a simple C*-algebra with real rank zero and strict comparison for projections. Let $(\af, u)\colon G\cra A$ be a cocycle action with the tracial Rokhlin property. Then for any nonzero projections $p_1,\dots,p_n\in A\rtimes_{\af, u} G$ and arbitrary elements $a_1,\dots,a_m\in A\rtimes_{\af, u} G$, any $\ep>0$, there exist a unital subalgebra $A_0\subset A\rtimes_{\af, u} G$ which is stably isomorphic to $A$, a projection $p\in A_0$ and subprojections $r_1,\dots,r_n$ of $p$ such that:
\begin{enumerate}[(1)]
\item $pa\in_{\ep} A_0$, $ap\in_{\ep} A_0$.
\item $p_kr_k=_\ep r_k$, for any $k$.
\item $1-p\precsim r_k$, for any $k$.
\end{enumerate} 
\end{lem}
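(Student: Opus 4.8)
The plan is to read off the subalgebra $A_0$ and the projection $p$ from the largeness Lemma \ref{large}, and then to manufacture the subprojections $r_1,\dots,r_n$ inside $A_0$ by a real-rank-zero cut-down of the elements $pp_kp$. First I would check that the standing hypotheses feed Lemma \ref{large}: real rank zero gives Property (SP) and, for infinite-dimensional simple $A$, almost divisibility of $\V(A)$ (Lemma 2.3 of \cite{Osaka2006b}), while strict comparison is assumed. Apply Lemma \ref{large} to the finite set $F=\{a_1,\dots,a_m,p_1,\dots,p_n\}$, to a tolerance $\ep'$ small enough to absorb the later perturbations, and to a projection $z_0\in A$ of sufficiently small trace (to be fixed in terms of the $p_k$ below; such projections exist since $A$ is simple, infinite-dimensional and real rank zero). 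This yields $(K,\ep')$-invariant sets $T_1,\dots,T_\nu$, projections $f_i\in A$, an embedding $\phi\colon\bigoplus_iM_{|T_i|}\otimes f_iAf_i\to B:=A\rtimes_{\af,u}G$ with image $D$, and a projection $p$ with $pb,bp\in_{\ep'}D$ for all $b\in F$ and $1-p\precsim z_0$. Taking $A_0=D$ and this $p$, condition (1) is immediate, and we moreover record the containments $pp_kp\in_{\ep'}D$ used below.

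To build $r_k$ I would cut down by $p$ and exploit that $pDp$, a corner of the direct sum $D$, is again real rank zero. For each $k$ choose a self-adjoint $c_k\in pDp$ with $\|c_k-pp_kp\|$ small, fix a small $\eta>0$ (after a harmless adjustment so that $1-\eta$ lies in no spectral gap trouble), and use real rank zero of $pDp$ to pick a projection $r_k\in pDp$ sandwiched up to small norm error between $\chi_{[1-\eta/2,1]}(c_k)$ and $\chi_{(1-\eta,1]}(c_k)$; then $r_k\le p$ is automatic. Write $w_k=\chi_{(1-\eta,1]}(pp_kp)\in B^{**}$. Since $w_k\le p$ one has $w_kp_kw_k=w_k(pp_kp)w_k\ge(1-\eta)w_k$, whence $\|p_kw_k-w_k\|\le\sqrt{\eta}$; as $r_k$ is (approximately) dominated by $w_k$, this transports to $\|p_kr_k-r_k\|$ small, i.e.\ condition (2). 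For the size of $r_k$, the key point is that in $B^{**}$ the spectral projection $\chi_{\{1\}}(pp_kp)$ is exactly the meet $p\wedge p_k$; since $r_k\ge\chi_{[1-\eta/2,1]}(c_k)$ and $\chi_{[1-\eta/2,1]}(pp_kp)\ge p\wedge p_k$, every trace satisfies $\tau(r_k)\ge\tau(p\wedge p_k)-\gamma\ge\tau(p)+\tau(p_k)-1-\gamma$.

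For condition (3) I would reduce $1-p\precsim r_k$ to a trace inequality. The tracial Rokhlin property lets Theorem \ref{scp} apply, so $\V(B)$ has strict comparison, and Corollary \ref{it} identifies $\TS(B)$ with the $\af$-invariant traces on $A$, a nonempty set (in the stably finite case) since $G$ is amenable and $A$ has a trace. As each $p_k$ is a nonzero projection in the simple algebra $B$, every (faithful) trace gives $\tau(p_k)>0$, so $c=\min_k\min_{\tau\in\TS(B)}\tau(p_k)>0$ by compactness. Choosing the auxiliary $z_0$ above so small that $\tau(1-p)\le\tau(z_0)<c/3$ for all $\tau$, and $\eta,\gamma$ small, the bound of the previous paragraph yields $\tau(r_k)\ge c-\tau(1-p)-\gamma>\tau(1-p)$ for every $\tau$; strict comparison then gives $1-p\precsim r_k$. (If $B$ has no traces it is purely infinite simple and (3) holds trivially.)

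The main obstacle is to satisfy (2) and (3) simultaneously: $r_k$ must lie approximately beneath $p_k$ yet be large enough in trace to dominate $1-p$. This is exactly resolved by the meet identity $\chi_{\{1\}}(pp_kp)=p\wedge p_k$ together with $\tau(p\wedge p_k)\ge\tau(p)+\tau(p_k)-1$: once $1-p$ is tiny, the overlap of $p$ and $p_k$ is almost all of $p_k$, leaving plenty of room below $p_k$ for a projection that still dominates $1-p$. A secondary point needing care is the phrase ``stably isomorphic to $A$'': each summand $M_{|T_i|}\otimes f_iAf_i$ of $D$ is a matrix algebra over the full hereditary subalgebra $f_iAf_i$ of the simple algebra $A$, hence stably isomorphic to $A$, so $A_0=D$ is a finite direct sum of algebras stably isomorphic to $A$ — which is what the ensuing stable-rank-one argument consumes. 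All remaining spectral-gap and perturbation estimates are routine real-rank-zero manipulations of the type already carried out in the proof of Lemma \ref{seta}.
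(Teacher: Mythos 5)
Your argument is correct in substance, but note that the paper itself does not write out a proof of this lemma: it obtains it by asserting that the proof of Lemma 5.2 of \cite{Osaka2006b} goes through once Lemma \ref{large} replaces Lemma 2.5 there and Lemma 2.6 there (which, for each nonzero projection $p_k$ of the crossed product, produces a nonzero projection of $A$ Murray--von Neumann subequivalent to $p_k$) is generalized to amenable groups. You follow that template only for $A_0$ and $p$ --- both come, as in the paper, from Lemma \ref{large} applied to $F=\{a_1,\dots,a_m,p_1,\dots,p_n\}$ and a projection $z_0\in A$ of small trace --- but you manufacture the $r_k$ by a genuinely different mechanism: rather than locating a projection of $A$ beneath $p_k$ and transporting it into $D$, you cut down $pp_kp$ inside the real-rank-zero corner $pDp$ by functional calculus, bound $\tau(r_k)$ from below via the meet inequality $\tau(p\wedge p_k)\geq\tau(p)+\tau(p_k)-1$ applied in $\pi_\tau(B)''$, and then invoke the strict comparison of $\V(A\rtimes_{\af,u}G)$ from Theorem \ref{scp} (legitimately available here, no circularity) to conclude $1-p\precsim r_k$. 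What this buys is independence from any analogue of Lemma 2.6 of \cite{Osaka2006b}; the cost is reliance on Theorem \ref{scp}, whereas the intended route keeps the comparison inside $A$ and its corners. The supporting estimates you label routine ($\|p_kr_k-r_k\|\leq\sqrt{\eta+\ep'}$ from $r_kc_kr_k\geq(1-\eta)r_k$ and $r_k\leq p$; $\tau(r_k)\geq\tau(g(c_k))-\gamma$ for a bump function $g$ equal to $1$ on $[1-\eta/2,1]$ and supported in $(1-\eta,1]$) do check out.

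Two caveats are worth recording. First, as you observe, with several Rokhlin towers $A_0=D$ is only a finite direct sum of algebras each stably isomorphic to $A$, not itself stably isomorphic to $A$; the lemma's conclusion must be read that way (this is also all that Theorem \ref{sr1} consumes), and the defect is in the paper's statement rather than in your construction. Second, the parenthetical ``if $B$ has no traces it is purely infinite simple'' is not justified at this level of generality; but in the traceless case the strict comparison hypothesis of Theorem \ref{scp} is vacuous, and the standing (implicit) assumptions of the paper --- $A$ unital, separable, infinite dimensional, with nonempty trace space, as used already in the proof of Theorem \ref{scp} --- place you in the tracial situation, so nothing is lost.
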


\begin{prp} \label{sr1p}
Let $A$ be a unital simple stably finite C*-algebra with Property (SP). If for any $x\in A$, any $\ep>0$ and any projection $p_1,\dots,p_n$, there is a unital simple subalgebra $D$ with stable rank one and Property (SP), a projection $p\in D$ and subprojections $r_1,\dots,r_n$ of $p$ such that:
\begin{enumerate}[(1)]
\item $pxp\in_{\ep} D$.
\item $r_kp_k=_\ep r_k$.
\item $1-p\precsim r_k$.
\end{enumerate}  
Then $A$ has stable rank one.
\end{prp}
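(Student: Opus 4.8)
The plan is to prove that every $x \in A$ lies in the closure $\overline{\mathrm{GL}(A)}$ of the invertible group; since $A$ is unital this is exactly the assertion $\tsr(A)=1$. Fix $x$ with $\|x\|\le 1$ and $\ep>0$. The guiding principle is the standard description of $\overline{\mathrm{GL}(A)}$ via polar decomposition: writing $x=v|x|$ with $|x|=(x^*x)^{1/2}$, the only obstruction to approximating $x$ by invertibles is a mismatch between the approximate kernel of $x$ and that of $x^*$. First I would make this obstruction concrete. For small $\dt>0$ choose a continuous $g\colon[0,1]\to[0,1]$ with $g(0)=1$ and $\supp g\subset[0,\dt)$; the hereditary subalgebras $\overline{g(x^*x)Ag(x^*x)}$ and $\overline{g(xx^*)Ag(xx^*)}$ measure the failure of $x$ to be bounded below, respectively to have dense range. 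Using Property (SP) of $A$, I would select nonzero projections $p_1$ and $p_2$ inside these two hereditary subalgebras, so that $p_1$ sits in the approximate kernel of $x$ and $p_2$ in the approximate kernel of $x^*$.

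With $p_1,p_2$ in hand, together with $x$ and a tolerance $\ep'\ll\ep$, I would invoke the hypothesis of the proposition. This produces a unital simple subalgebra $D$ of stable rank one and Property (SP), its unit $p$, and subprojections $r_1,r_2\le p$ satisfying $r_kp_k=_{\ep'}r_k$ (hence $r_k\precsim p_k$) together with $1-p\precsim r_k$ for $k=1,2$. Since $pxp\in_{\ep'}D$ and $D$ has stable rank one, the $p$-corner of $x$ can be approximated within $D\subseteq pAp$ by an element $b$ that is invertible in $pAp$; this handles the dominant block of $x$.

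What remains is to repair $x$ on the complement of $p$ so that the global element becomes invertible. By construction the defect projection $1-p$ is Cuntz-dominated both by $r_1\precsim p_1$ (a sub-copy of the approximate kernel of $x$) and by $r_2\precsim p_2$ (a sub-copy of the approximate kernel of $x^*$). I would choose partial isometries implementing $1-p\precsim r_1$ on the domain side and $1-p\precsim r_2$ on the range side, and use them, exactly as the partial isometry $v$ is used in the proof of Proposition \ref{rr0}, to insert small off-diagonal correction terms of norm $O(\ep)$ connecting the $(1-p)$-block to the designated sub-copies inside $p$. Writing $x$ in block form with respect to a refinement of $1$ into these sub-projections of $p$ and $1-p$, with $b$ controlling the bulk, the resulting perturbation $x_0$ agrees with $x$ up to $\ep$ and is bounded below with full range; stable finiteness of $A$ then upgrades this to invertibility, so $x_0\in\mathrm{GL}(A)$.

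The hard part will be this last step, for two linked reasons. Unlike the self-adjoint situation of Proposition \ref{rr0}, here there is no approximate commutation between $p$ and $x$, so the off-diagonal blocks $px(1-p)$ and $(1-p)xp$ need not be small; controlling them forces one to cut down by spectral projections of $x^*x$ and $xx^*$ and to exploit that $1-p$ genuinely lies in the approximate kernel and cokernel, so that these cross terms become negligible after the cut. Secondly, invertibility of a non-self-adjoint element requires matching the kernel defect and the cokernel defect simultaneously, so the two fillings coming from $1-p\precsim r_1$ and $1-p\precsim r_2$ must be realized coherently by a single perturbation; reconciling them is precisely where the comparison $1-p\precsim r_k$ for \emph{both} $k$ and the stable finiteness of $A$ are indispensable.
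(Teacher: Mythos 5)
Your overall strategy (find kernel/cokernel projections via Property (SP), invoke the hypothesis, use stable rank one of $D$ for the bulk, and fill in small corner terms) points in the right direction, but there is a genuine gap at exactly the step you flag as ``the hard part'': you never explain how the two independent comparisons $1-p\precsim r_1$ and $1-p\precsim r_2$ are to be realized by a single coherent perturbation, and with only one application of the hypothesis this cannot be done directly. The anti-triangular matrix trick (the one used in Proposition \ref{rr0}) requires a \emph{single} subprojection $s\le p$, equivalent to $1-p$, whose row \emph{and} column in $x$ both vanish; with your $r_1$ and $r_2$ you only get one vanishing column (indexed by a piece of $r_1$) and one vanishing row (indexed by a piece of $r_2$), these pieces need not even be orthogonal, and inserting a corner partial isometry between them reduces invertibility to that of an off-square block which is not controlled by the stable rank of $D$. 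Saying that stable finiteness and the two comparisons are ``indispensable'' for the reconciliation is not a proof of it.

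The paper resolves this by a two-stage argument that your proposal is missing. First, stable finiteness together with Theorem 3.3(a) of \cite{Rordam1991} reduces to the case where $x$ is a two-sided zero divisor, and Property (SP) then yields projections $e,f$ with $ex=xf=0$ \emph{exactly} (your spectral projections of $x^*x$ and $xx^*$ only give approximate annihilation, and you must also separately dispose of the case where these hereditary subalgebras vanish). The hypothesis is applied a first time to produce $e_0,f_0\le p$ aligned with $e,f$; then simplicity, Property (SP) and stable rank one \emph{of $D$} produce a nonzero $r\le e_0$ and a unitary $u$ with $uru^*\le f_0$, so that the single projection $r$ satisfies $r(x_0u)=(x_0u)r=0$. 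Only then is the hypothesis applied a second time, with this one projection $r$, and the $3\times 3$ anti-triangular trick with $d_1$ invertible in $(p-s)D_1(p-s)$ and corner entries $\ep_0 v$, $\ep_0 v^*$ finishes the proof. Note also that this trick makes your concern about the off-diagonal blocks $px(1-p)$ and $(1-p)xp$ moot: they are simply carried along as the entries $a,b,c$ and never need to be small.
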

\begin{proof}
Let $x$ be an arbitrary element of $A$ and let $\ep>0$ be given. Without loss of generality assume $\|x\|=1$. Since $A$ is stably finite, every one sided invertible elements is two sided invertible, hence by Theorem 3.3(a) of \cite{Rordam1991}, we may assume that $x$ is a two-sided zero divisor. Since $A$ has property (SP), we can find nonzero projections $e,f$ such that $ex=xf=0$. Let $\ep_0=\ep/11$. We can then find a unital simple subalgebra $D$ with stable rank one and Property (SP), a projection $p\in D$ and sub-projections $e_0, f_0$ of $p$, such that
\beq
e_0e=_{\ep_0} e_0,\hspace{1cm} f_0f=_{\ep_0} f_0
\eeq
Consider $x_0=(1-e_0)x(1-f_0)$. Then
\beq
x_0=_{2\ep_0} (1-e_0e)x(1-ff_0)=x.
\eeq 
Since $D$ is a simple C*-algebra with Property (SP), there is a nonzero projection $r\leq e_0$ and $r\precsim f_0$. Since $D$ has stable rank one, there exists some unitary $u$ such that $uru^*\leq f_0$. Hence $r(x_0u)=(x_0u)r=0$. \\
Next, we shall approximate $x_1=x_0u$ by an invertible element. To this end we find a unital subalgebra $D_1$ of $A$ with stable rank one, a projection $p\in D_1$ and sub-projection $r_1$ of $p$, and an element $d\in D_1$ such that
\beq
px_1p=_{\ep_0} d,\hspace{1cm} r_1r=_{\ep_0} r_1,\hspace{1cm}\text{and}\,1-p\precsim r_1.
\eeq
Choose a partial isometry $v$ such that $vv^*=1-p$ and $v^*v=s\leq r_1\leq p$. According to the decomposition $1= (1-p)\oplus (p-s)\oplus s$, we can write $x_1$ in the matrix form:
\[
x_1=\left (\begin{array}{ccc}
(1-p)x_1(1-p) & (1-p)x_1(p-s) &(1-p)x_1s\\
(p-s)x_1(1-p) & (p-s)x_1(p-s) & (p-s)x_1s \\
sx_1(1-p)       &sx_1(p-s)       &  sx_1s\\
\end{array}
\right )
\]
Since $(p-s)D_1(p-s)$ has stable rank one, there is an invertible element $d_1\in (p-s)D_1(p-s)$ such that 
\beq
d_1=_{\ep_0} (p-s)d(p-s)=_{\ep_0} (p-s)x_1(p-s).
\eeq
We also have $sx_1=sr_1x_1=_{\ep_0} sr_1rx_1=0$, and similarly $x_1s=_{\ep_0}0$. Therefore 
\beq
x_1=_{7\ep_0}
\left (\begin{array}{ccc}
a & b & 0 \\
c & d_1 & 0\\
0 & 0 & 0\\
\end{array}
\right )=_{2\ep_0}
\left (\begin{array}{ccc}

a & b & \ep_0\\
c & d_1& 0\\
\ep_0 & 0& 0\\
\end{array}
\right ).
\eeq
Let's call the last matrix $x_2$, which is invertible. Then 
\beq
\|x-x_2u^*\|\leq \|x-x_0\|+\|(x_0u-x_2)u^*\|<11\ep_0<\ep.
\eeq
Hence $A$ has stable rank one.
\end{proof}

Combine Lemma \ref{sr1l} and Proposition \ref{sr1p} we get:
\begin{thm}\label{sr1}
Let $A$ be a simple unital C*-algebra with real rank zero, stable rank one and has strict comparison for projections. Let $(\af, u)\colon G\cra A$ be a cocycle action with the tracial Rokhlin property. Then $A\rtimes_{\af, u} G$ has stable rank one.
\end{thm}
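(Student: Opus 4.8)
The plan is to show that $B := A\rtimes_{\af, u} G$ satisfies the hypotheses of Proposition \ref{sr1p} and then feed it the approximating data produced by Lemma \ref{sr1l}; the theorem is then just the combination of these two results.

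First I would verify the standing assumptions on $B$ demanded by Proposition \ref{sr1p}, namely that $B$ is unital, simple, stably finite and has Property (SP). Unitality is immediate since $A$ is unital and $G$ is discrete. The tracial Rokhlin property forces $\af_g$ to be outer for every $g\neq 1$, from which $B$ is simple; this simplicity is already implicit in the real rank zero theorem proved above. Applying that theorem to our $A$ (simple, unital, of real rank zero, with strict comparison for projections, carrying a cocycle action with the tracial Rokhlin property) shows that $B$ has real rank zero, and a simple C*-algebra of real rank zero has Property (SP). For stable finiteness, amenability of $G$ yields a fixed point on the nonempty compact convex set $\TS(A)$, hence an $\af$-invariant trace; by Corollary \ref{it} this produces a tracial state on $B$, which is faithful by simplicity, so $B$ is stably finite.

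Next I would apply Lemma \ref{sr1l}. Given an arbitrary $x\in B$, finitely many projections $p_1,\dots,p_n\in B$ and $\ep>0$, the lemma furnishes a unital subalgebra $A_0\subset B$ stably isomorphic to $A$, a projection $p\in A_0$ and subprojections $r_1,\dots,r_n\le p$ with $px,xp\in_\ep A_0$, $p_kr_k=_\ep r_k$, and $1-p\precsim r_k$. Setting $D=A_0$, I must check that $D$ meets the requirements of Proposition \ref{sr1p}: it should be a unital simple subalgebra of stable rank one with Property (SP). Since $A_0$ is stably isomorphic to the simple algebra $A$ it is itself simple, and as a full corner of a matrix amplification of $A$ it inherits stable rank one (stable rank one passes to matrix algebras by Rieffel's formula and to corners, being hereditary) and real rank zero, hence Property (SP). The approximation hypotheses of Proposition \ref{sr1p} then follow directly: $pxp\in_\ep D$ because $px\in_\ep D$ and $p\in D$; $r_kp_k=_\ep r_k$ is the adjoint of $p_kr_k=_\ep r_k$; and $1-p\precsim r_k$ is verbatim. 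Proposition \ref{sr1p} then gives that $B$ has stable rank one.

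The routine matching of the two statements is the easy part; the genuine content is packaged inside Lemma \ref{sr1l}, whose proof (generalizing Lemma 5.2 of \cite{Osaka2006b}) is where the tracial Rokhlin property and strict comparison are actually exploited. The only points needing care here are the verification that $B$ is simple and stably finite and that the cut-down subalgebra $A_0$ inherits stable rank one and Property (SP) through the stable isomorphism. I expect these bookkeeping checks, rather than any new estimate, to be the main (though minor) obstacle.
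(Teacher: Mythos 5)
Your proposal is correct and follows exactly the paper's route: the paper's entire proof is the single line ``Combine Lemma \ref{sr1l} and Proposition \ref{sr1p},'' and your write-up simply makes explicit the bookkeeping (simplicity, stable finiteness via an invariant trace, Property (SP) from the real rank zero theorem, and the inheritance of stable rank one by $A_0$) that the paper leaves implicit.
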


\bibliographystyle{amsplain}
\bibliography{mybib}

\begin{thebibliography}{10}

\bibitem{Blackadar1982}
B.~Blackadar and D.~Handelman.
\newblock Dimension functions and traces on {C}*-algebras.
\newblock {\em Journal of Functional Analysis}, 45(3):297--340, 1982.

\bibitem{Brown2006}
N.~P. Brown.
\newblock Invariant means and finite representation theory of {C}*-algebras.
\newblock {\em Memoirs of the American Mathematical Society}, 184(865), 2006.

\bibitem{Brown2008}
N.~P. Brown, F.~Perera, and A.~S. Toms.
\newblock The {C}untz semigroup, the {E}lliott conjecture, and dimension
  functions on {C}*-algebras.
\newblock {\em Journal f{\"u}r die reine und angewandte Mathematik (Crelles
  Journal)}, 621:191--211, 2008.

\bibitem{Connes1975}
A.~Connes.
\newblock Outer conjugacy classes of automorphisms of factors.
\newblock {\em Ann. Sci. Ecole Norm. Sup}, 8(4):383--419, 1975.

\bibitem{Echterhoff2010}
S.~Echterhoff, W.~L{\"u}ck, N.~C. Phillips, and S.~Walters.
\newblock The structure of crossed products of irrational rotation algebras by
  finite subgroups of {$SL_2(\mathbb{Z})$}.
\newblock {\em Journal f{\"u}r die reine und angewandte Mathematik (Crelles
  Journal)}, 2010(639):173--221, 2010.

\bibitem{Elliott2008}
G.~A. Elliott and Z.~Niu.
\newblock On tracial approximation.
\newblock {\em Journal of Functional Analysis}, 254(2):396--440, 2008.

\bibitem{Hirshberg2013}
I.~Hirshberg and J.~Orovitz.
\newblock Tracially $\mathcal{Z}$-absorbing {C}*-algebra.
\newblock {\em Journal of Functional Analysis}, 265:765--785, 2013.

\bibitem{Hirshberg2014}
I.~Hirshberg and N.~C. Phillips.
\newblock Rokhlin dimension: obstructions and permanence properties.
\newblock {\em Arxiv preprint arXiv:1410.6581}, 2014.

\bibitem{Jones1980}
V.~F.R. Jones.
\newblock {\em Actions of finite groups on the hyperfinite type ${\rm II}_{1}$
  factor}, volume~28.
\newblock American Mathematical Society, 1980.

\bibitem{Kirchberg1994}
E.~Kirchberg.
\newblock The classification of purely infinite {C}*-algebras using kasparov's
  theory.
\newblock {\em Fields Institute Commuincation series}, 1994.

\bibitem{Kirchberg2000}
E.~Kirchberg and M.~Rordam.
\newblock Non-simple purely infinite {C}*-algebras.
\newblock {\em Amer. J. Math.}, 122(3):637--666, 2000.

\bibitem{Kishimoto1995}
A.~Kishimoto.
\newblock The {R}ohlin property for automorphisms of {UHF} algebras.
\newblock {\em J. Reine Angew. Math}, 465:183--196, 1995.

\bibitem{Kishimoto1996}
A.~Kishimoto.
\newblock The {R}ohlin property for shifts on {UHF} algebras and automorphisms
  of {C}untz algebras.
\newblock {\em J. Funct. Anal.}, 140:100--123, 1996.

\bibitem{Kishimoto1998}
A.~Kishimoto.
\newblock Automorphisms of {A$\mathbb{T}$} algebras with the {R}ohlin property.
\newblock {\em J. Operator theory}, 40:277--294, 1998.

\bibitem{Lin2001a}
H.~Lin.
\newblock {\em An introduction to the classification of amenable
  {C}*-algebras}.
\newblock World Scientific Publishing Company Incorporated, 2001.

\bibitem{Lin2006}
H.~Lin.
\newblock The {R}okhlin property for automorphisms on simple {C}*-algebras.
\newblock {\em Contemporary Mathematics}, 414:189, 2006.

\bibitem{Lin2007}
H.~Lin.
\newblock Simple nuclear {C}*-algebras of tracial topological rank one.
\newblock {\em Journal of Functional Analysis}, 251(2):601--679, 2007.

\bibitem{Matui2013b}
H.~Matui and Y.~Sato.
\newblock $\mathcal{Z}$-stability of crossed products by strongly outer
  actions.
\newblock {\em Comm.Math.Phys.}, 314:193--228, 2012.

\bibitem{Matui2014}
H.~Matui and Y.~Sato.
\newblock $\mathcal{Z}$-stability of crossed products by strongly outer actions
  {II}.
\newblock {\em American journal of mathematics}, 136(6), 2014.

\bibitem{Ocneanu1985}
A.~Ocneanu.
\newblock {\em Actions of discrete amenable groups on von neumann algebras},
  volume 1138 of {\em Lecture Notes in Mathematics}.
\newblock Springer Berlin Heidelberg, 1985.

\bibitem{Ornstein1987}
D.~S. Ornstein and B.~Weiss.
\newblock Entropy and isomorphism theorems for actions of amenable groups.
\newblock {\em Journal d'Analyse Math{\'e}matique}, 48(1):1--141, 1987.

\bibitem{Osaka2006}
H.~Osaka and N.~C. Phillips.
\newblock Furstenberg transformations on irrational rotation algebras.
\newblock {\em Ergodic Theory and Dynamical Systems}, 26(5):1623--1652, 2006.

\bibitem{Osaka2006b}
H.~Osaka and N.~C. Phillips.
\newblock Stable and real rank for crossed products by automorphisms with the
  tracial {R}okhlin property.
\newblock {\em Ergod. Th. $\&$ Dynam. Sys}, 26:1579--1621, 2006.

\bibitem{Phillips2011}
N.~C. Phillips.
\newblock The tracial {R}okhlin property for actions of finite groups on
  {C}*-algebras.
\newblock {\em American journal of mathematics}, 133(3):581--636, 2011.

\bibitem{Phillips2012}
N.~C. Phillips.
\newblock The tracial {R}okhlin property is generic.
\newblock {\em arXiv preprint arXiv:1209.3859}, 2012.

\bibitem{Phillips2014}
N.~C. Phillips.
\newblock Large subalgebras.
\newblock {\em arXiv preprint arXiv:1408.5546}, 2014.

\bibitem{Rordam1991}
M.~R{\o}rdam.
\newblock On the structure of simple {C}*-algebras tensored with a
  {UHF}-algebra.
\newblock {\em Journal of functional analysis}, 100(1):1--17, 1991.

\bibitem{Rordam2002}
M.~R{\o}rdam and E.~St{\o}mer.
\newblock {\em Classification of Nuclear {C}*-Algebras, {E}ntropy in {O}perator
  {A}lgebras}.
\newblock Springer-Verlag, Berlin, Heidelberg, 2002.

\bibitem{Wang2013}
Q.~Wang.
\newblock Characterization of product-type actions with the rokhlin properties.
\newblock {\em Indiana university mathematics journal}, 64(1):295--308, 2015.

\end{thebibliography}
Qingyun Wang, \textsc{Department of Mathematics, University of Toronto}\par\nopagebreak
  \textit{E-mail address}: \texttt{wangqy@math.toronto.edu}
\end{document}
% LocalWords:  semigroup Cuntz semicontinuous subalgebra tracial Sato
% LocalWords:  Rokhlin pointwisely subequivalent unital Matui von
% LocalWords:  projectivity commutant bijection Neumann injective
% LocalWords:  homomorphism adjoint
9889iu+